\definecolor{darkgreen}{rgb}{0,0.6,0}
\definecolor{darkblue}{rgb}{0,.2,.7}
\newcommand\II{\mathrm{I}\hskip-.3mm\mathrm{I}}
\newcommand{\doublewidetilde}[1]{{%
  \mathpalette\double@widetilde{#1}%
}}
\newcommand{\double@widetilde}[2]{%
  \sbox\z@{$\m@th#1\widetilde{#2}$}%
  \ht\z@=.9\ht\z@
  \widetilde{\box\z@}%
}
\newcommand{\dist}{\operatorname{dist}}
\newcommand\seq{=}
\newcommand\define{\mathrel{:= }}
\newcommand\ede{\define}
\newcommand{\CC}{\mathbb C}
\newcommand{\NN}{\mathbb N}
\newcommand{\RR}{\mathbb R}
\newcommand{\ZZ}{\mathbb Z}
\newcommand{\maX}{\mathcal X}
\newcommand{\CI}{{\mathcal C}^{\infty}}
\newcommand{\CIc}{{\mathcal C}^{\infty}_{\text{c}}}
\newcommand\pa{\partial}
\newcommand{\End}{\operatorname{End}}
\newcommand{\vol}{\operatorname{vol}}
\newcommand{\dvol}{\,\operatorname{dvol}}
\newcommand{\maD}{\mathcal D}
\newcommand{\maF}{\mathcal F}
\newcommand{\maW}{\mathcal W}
\newcommand{\rinj}{\mathop{r_{\mathrm{inj}}}}
\newcommand\<{\langle}
\renewcommand\>{\rangle} 
\newcommand\cdotH[1]{\overset{\bullet}{H}{}^{#1}}
\newtheorem{theorem}{Theorem}[section]
\newtheorem{proposition}[theorem]{Proposition}
\newtheorem{corollary}[theorem]{Corollary}
\newtheorem{assumption}[theorem]{Assumption}
\newtheorem{lemma}[theorem]{Lemma}
\newtheorem{notation}[theorem]{Notations}
\theoremstyle{definition}
\newtheorem{definition}[theorem]{Definition}
\theoremstyle{remark}
\newtheorem{remark}[theorem]{Remark}
\newtheorem{example}[theorem]{Example}
\author[N. Gro{\ss}e]{Nadine Gro{\ss}e} \address{Mathematisches
  Institut, Universit\"at Freiburg, 79104 Freiburg, Germany}
\email{nadine.grosse@math.uni-freiburg.de}
\author[V. Nistor]{Victor Nistor} \address{Universit\'{e} de Lorraine,
  UFR MIM, Ile du Saulcy, CS 50128, 57045 METZ, France and
  Inst. Math. Romanian Acad.  PO BOX 1-764, 014700 Bucharest Romania}
\email{victor.nistor@univ-lorraine.fr}
\thanks{N.G. has been partially supported by SPP 2026 (Geometry at
  infinity), funded by the DFG. V.N. has been partially supported by
  ANR-14-CE25-0012-01.\\
%
%
AMS Subject classification (2010): 58J40 (primary), 47L80, 58H05,
46L87, 46L80}
\begin{document}

\title[Mixed problems]{Uniform Shapiro-Lopatinski conditions and
  boundary value problems on manifolds with bounded geometry}

\begin{abstract} 
We study the regularity of the solutions of second order boundary
value problems on \emph{manifolds with boundary and bounded geometry}.
We first show that the regularity property of a given boundary value
problem $(P, C)$ is equivalent to the \emph{uniform} regularity of the
natural family $(P_x, C_x)$ of associated boundary value problems in
local coordinates. We verify that this property is satisfied for the
Dirichlet boundary conditions and strongly elliptic operators via a
compactness argument. We then introduce a \emph{uniform
  Shapiro-Lopatinski regularity condition,} which is a modification of
the classical one, and we prove that it characterizes the boundary
value problems that satisfy the usual regularity property. We also
show that the natural Robin boundary conditions always satisfy the
uniform Shapiro-Lopatinski regularity condition, provided that our
operator satisfies the strong Legendre condition. This is achieved by
proving that ``well-posedness implies regularity'' via a modification
of the classical ``Nirenberg trick''. When combining our regularity
results with the Poincar\'e inequality of (Ammann-Gro{\ss}e-Nistor,
preprint 2015), one obtains the usual well-posedness results for the
classical boundary value problems in the usual scale of Sobolev
spaces, thus extending these important, well-known theorems from
smooth, bounded domains, to manifolds with boundary and bounded
geometry. As we show in several examples, these results do not hold
true anymore if one drops the bounded geometry assumption. We also
introduce a \emph{uniform Agmon condition} and show that it is
equivalent to the coerciveness. Consequently, we prove a
well-posedness result for parabolic equations whose elliptic generator
satisfies the uniform Agmon condition.
\end{abstract}

\maketitle

\tableofcontents
  
\section{Introduction}

We study the regularity of the solutions of general second order
boundary value problems on \emph{manifolds with boundary and bounded
  geometry.} We obtain several sufficient and several necessary
conditions for the regularity of the solution in the usual scale of
Sobolev spaces. To state these results in more detail, we need to
introduce some notation.

One of the reasons for our interest in manifolds with bounded geometry
is due to the fact that they provide a very convenient setting to
study evolution equations in a geometric setting \cite{BaerWafo,
  BaerGinoux, GerardBG, GerardWrochna, ElderingCR, ElderingThesis,
  Simonett, ShubinAsterisque, Thalmaier18}. For instance, maximal
regularity results on manifolds with bounded geometry were obtained by
H. Amann \cite{AmannParab, AmannCauchy} and A. Mazzucato and V. Nistor
\cite{MN1}. Applications to Quantum Field Theory were obtained
C. B\"ar and A. Strohmaier \cite{BaerStrohmaier}, by C. G\'erard
\cite{gerardRR}, and by W. Junker and E. Schrohe
\cite{SchrohePhysics}, to mention just a few of the many papers on the
subject. In this paper, however, we establish some basic results for
\emph{elliptic} equations, which are useful in the study of evolution
equation and for many other problems.

Let $M$ be a manifold with boundary and bounded geometry, see 
Definition~\ref{def_bdd_geo}, and consider
a boundary value problem of the form
\begin{align}\label{eq.mixed0}
  \left\{ \begin{aligned} P u & \ = f && \text{in } M\\
    C u & \ = h && \text{on } \pa M.
  \end{aligned} \right. 
\end{align}
More precisely, we are interested in \emph{regularity results}, which
generally say that if $u$ has a minimal regularity, but $f$ and $h$
have a good regularity, then $u$ also has a good regularity. See
Definition~\ref{def.loc.reg} for a precise definition.

We provide \emph{three main methods} to obtain a good regularity for
$u$ based on:
\begin{enumerate} 
\item the uniform regularity of the local problems;
\item the uniform Shapiro-Lopatinski conditions; and
\item the well-posedness in energy spaces.
\end{enumerate}

The first two methods provide a characterization of the boundary value
problems that satisfy regularity, but are not always easy to use in
practice. Most of the examples of boundary value problems that we know
that are satisfying regularity are obtained from a well-posedness
result in spaces of minimal regularity (energy spaces). In turn, these
well-posedness results are obtained from coercivity.

Let us discuss now each of the three methods and the specific results
and applications contained in this paper.

\subsubsection*{Uniform regularity of the local problems}
This method is discussed in Section~\ref{sec.u.est}. In that section,
we associate to each point $x$ of $M$ a local operator $P_x$. This
operator is associated to a boundary value problem $(P_x, C_x)$ if $x$
is on the boundary. These operators are defined on small--but uniform
size--coordinate patches around $x$ (some care is needed close to the
boundary). We introduce then \emph{uniform regularity conditions} for
these local operators. This amounts to regularity conditions for each
of the local operators in such a way that the resulting constants are
independent of $x$ (see Definition~\ref{def.unif.reg}).  We then show
that the initial operator satisfies regularity if, and only if, the
associated family of local operators satisfies a uniform regularity
condition.  This is Theorem~\ref{thm.reg0}. We further show that if
the family of local operators is compact and satisfies regularity,
then it satisfies a \emph{uniform} regularity condition.  The
compactness condition is satisfied, for instance, by strongly elliptic
operators endowed with \emph{Dirichlet boundary conditions}. This then
yields right away a regularity result for these operators (Theorem
\ref{thm.reg.D}).

\subsubsection*{Uniform Shapiro-Lopatinski conditions} 
This method is discussed in Section~\ref{sec.SL}.  In analogy with the
classical case, we introduce a \emph{uniform Shapiro-Lopatinski
  regularity condition} and we show that it characterizes the boundary
value problems that satisfy regularity (Theorem~\ref{thm.SL}). The
usual Shapiro-Lopatinski conditions are not expressed in a
quantitative way that will make them amenable to generalize right away
to uniform conditions. For this reason, we go back to a more basic
formulation of the Shapiro-Lopatinski condition in terms of the left
invertibility of suitable model problems $(P_x^{(0)}, C_x^{(0)})$, one
for each point of the boundary, and we require the norms of inverses
of these problems to be bounded. If the problems $(P_x, C_x)$ are the
\emph{local} versions of the initial problem $(P, C)$, the problems
$(P_x^{(0)}, C_x^{(0)})$ are the \emph{micro-local} versions of $(P,
C)$.

\subsubsection*{Well-posedness in energy spaces}
This method is discussed in Section~\ref{sec.coercive}. While
intellectually satisfying (especially in view of the classical
results), the uniform Shapiro-Lopatinski is not very easy to use in
practice. For this reason, we extend a result of Nirenberg
\cite{NirenbergSE} to prove that the problems that are well-posed in
energy spaces satisfy regularity (Theorem~\ref{thm.wp}). We then
combine this method with the other two to check that the Robin
boundary conditions $e_1\pa_{\nu}^{a} + Q$ (for suitable $Q$, see
Remark \ref{rem.Q20}) satisfy, first, the classical (i.e. not
necessarily uniform) Shapiro-Lopatinski boundary conditions, then that
they satisfy the compactness condition, and hence, the uniform
regularity for the local problems. This allows us to conclude a
regularity result for mixed Dirichlet/Robin boundary conditions (with
suitable $Q$). Let us explain this result in more detail.

For the rest of this introduction, $(M,g)$ will be a Riemannian
manifold with boundary and bounded geometry and $E \to M$ will be a
vector bundle with bounded geometry. We assume that we are given a
decomposition $E \vert_{\pa M} = F_0 \oplus F_1$ as an orthogonal sum
of vector bundles with bounded geometry and we let $e_j$ denote the
orthogonal projection $E \to F_j$. Let $a$ be a sesquilinear form on
$T^*M\otimes E$ and let $Q$ be a first order differential operator
acting on the sections of $F_1 \to \pa M$. To this data we associate
the operator $\tilde P_{(a, Q)} \colon H^1(M; E) \to H^1(M; E)^*$
defined by
\begin{equation}\label{eq.def.PaQ}
   \langle \tilde P_{(a, Q)}
     u, v \rangle  = \int_M a(\nabla u, \nabla v) d\vol_g + \int_{\pa
     M} (Q u, v) d\vol_{\pa g}.
\end{equation} 
We shall be interested in the second order operator $\tilde P \define
\tilde P_{(a, Q)} + Q_1$, where $Q_1$ is a first order differential
operator. An operator of this form will be called a \emph{second order
  differential operator in divergence form.} Let $\nu$ be the outer
normal vector field to the boundary and $r$ be the distance to the
boundary, then $dr(\nu)=1$ and $\pa_{\nu}^{a} u\define a(dr, \nabla
u)$ is the associated conormal derivative, see Remarks~\ref{rem.EndE},
\ref{rem.Q20} and Example~\ref{ex.pa.nu}.  We let $P$ be obtained from
$\tilde P$ using the restriction to $H^1_0$.  See
Section~\ref{subsub.bilin.forms} for more details.  Our result is a
regularity result for the problem
\begin{align}\label{eq.mixed}
  \left\{ \begin{aligned} P u &= f && \text{in } M\\
    e_0 u & = h_0 && \text{on } \pa M\\
    e_1 \pa_{\nu}^{a}u + Q u &= h_1 &&\text{on } \pa M \,.
  \end{aligned}\right.
\end{align}

Recall that a sesquilinear form $a$ on a hermitian vector bundle $V
\to X$ is \emph{called strongly} coercive (or \emph{strictly
  positive}) if there is some $c>0$ such that $\Re a(\xi,\xi)\geq
c|\xi|^2$ for all $x\in X$ and $\xi \in V_x$. If the sesquilinear form
$a$ on $T^*M \otimes E$ used to define $P$ is strongly coercive, then
$P$ is said to satisfy the \emph{strong Legendre condition}. See also
Definition~\ref{def.s.coercive}.

\begin{theorem}\label{thm.intro.reg}
Let $M$ be a manifold with boundary and bounded geometry, $E \to M$ be
a vector bundle with bounded geometry, and $\tilde P \define \tilde
P_{(a, Q)} + Q_1$ be second order differential operator in divergence
form acting on $(M, E)$, as above. Assume that $\tilde P$ has
coefficients in $W^{\ell+1, \infty}$, that $a$ that is strongly
coercive, and that $Q + Q^*$ is a zero order operator. Then there is
$C > 0$ such that, if $u \in H^1(M; E)$, $f \define Pu \in
H^{\ell-1}(M; E)$, $h_0 \define e_0u \vert_{\pa M} \in
H^{\ell+1/2}(\pa M; F_0)$, and $h_1 \define e_1 \pa_{\nu}^{a} u + Q u
\in H^{\ell-1/2}(\pa M; F_1)$, then $u \in H^{\ell+1}(M; E)$ and
\begin{equation*}
  \|u\|_{H^{\ell+1}} \leq C \left( \|f\|_{H^{\ell-1}} +
  \|h_0\|_{H^{\ell+1/2}} + \|h_1\|_{H^{\ell-1/2}} + \|u\|_{H^{1}}
  \right).
\end{equation*}
\end{theorem}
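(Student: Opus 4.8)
The plan is to assemble this theorem from the three machines developed in the body of the paper, using the logical chain: well-posedness in energy spaces $\Rightarrow$ classical (non-uniform) Shapiro-Lopatinski regularity for each model problem $\Rightarrow$ compactness of the family of local/micro-local problems $\Rightarrow$ \emph{uniform} Shapiro-Lopatinski regularity $\Rightarrow$ the claimed a priori estimate via Theorem~\ref{thm.SL} (and Theorem~\ref{thm.reg0}). First I would check that $\tilde P = \tilde P_{(a,Q)} + Q_1$, restricted to $H^1_0$ to obtain $P$, together with the boundary operators $(e_0, \, e_1\pa_\nu^a + Q)$, really does fit the abstract framework of Section~\ref{sec.coercive}: the strong Legendre (strong coercivity of $a$) hypothesis should give, via Lax-Milgram together with the Poincaré-type inequality cited from Ammann-Gro\ss e-Nistor, that the sesquilinear form $B(u,v) = \int_M a(\nabla u,\nabla v) + \int_{\pa M}(Qu,v) + \langle Q_1 u, v\rangle$ is bounded and coercive on the appropriate energy space $\{u \in H^1 : e_0 u = 0\}$, modulo a lower-order perturbation controlled by $\|u\|_{H^1}$ on the right-hand side — this is exactly the setting in which Theorem~\ref{thm.wp} (the extended Nirenberg trick) applies and yields regularity, i.e. the $\ell = 1$ case of the estimate.

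Second, to bootstrap from $\ell=1$ to general $\ell$, I would feed this into the Shapiro-Lopatinski picture. The point of Theorem~\ref{thm.wp} is that the mixed Dirichlet/Robin problem satisfies the \emph{classical} Shapiro-Lopatinski condition at every boundary point; concretely, for each $x \in \pa M$ one forms the constant-coefficient model operator $P_x^{(0)}$ on a half-space with model boundary conditions $(e_0(x), \, e_1(x)\pa_\nu^{a(x)} + Q^{(0)}_x)$, and one checks left invertibility of the model problem. Here I would use that the principal symbol of $\tilde P_{(a,Q)}$ is $\xi \mapsto a(\xi,\xi)$ up to the bundle identifications, so strong coercivity of $a$ forces the model ODE system on the half-line to have the correct dimension of decaying solutions and the model boundary map to be an isomorphism onto the relevant boundary data space — this is the standard computation showing that Dirichlet conditions and conormal-derivative (Robin) conditions are Shapiro-Lopatinski-elliptic for strongly elliptic divergence-form operators, with the condition $Q + Q^* $ of order zero ensuring the lower-order Robin term does not spoil the top-order model.

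Third, I would invoke the compactness mechanism of Section~\ref{sec.u.est}: because $(M,g)$, $E$, $F_0$, $F_1$ all have bounded geometry and $\tilde P$ has coefficients in $W^{\ell+1,\infty}$, the family $\{(P_x, C_x)\}_{x}$ — equivalently the micro-local family $\{(P_x^{(0)}, C_x^{(0)})\}$ — ranges over a \emph{precompact} set of model problems (the coefficient tuples live in a bounded, hence precompact in the $C^\ell_{\mathrm{loc}}$-topology up to the bounded-geometry normalization, subset of the relevant coefficient space). Combined with the fact, from step two, that each member of this family satisfies regularity, the compactness result of that section (the analogue of Theorem~\ref{thm.reg.D} for Robin, as outlined in the introduction) upgrades pointwise regularity to \emph{uniform} Shapiro-Lopatinski regularity: the norms of the inverses of the model problems are uniformly bounded in $x$. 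Then Theorem~\ref{thm.SL} (together with Theorem~\ref{thm.reg0}) converts this uniform condition into exactly the global a priori estimate $\|u\|_{H^{\ell+1}} \le C(\|f\|_{H^{\ell-1}} + \|h_0\|_{H^{\ell+1/2}} + \|h_1\|_{H^{\ell-1/2}} + \|u\|_{H^1})$, and in particular $u \in H^{\ell+1}$ once the right-hand side is finite.

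The main obstacle I anticipate is the interface between the energy/coercivity formulation and the Shapiro-Lopatinski formulation at the level of the \emph{mixed} boundary condition: one must carefully track how splitting $E|_{\pa M} = F_0 \oplus F_1$ interacts with the conormal derivative $\pa_\nu^a$ and with the operator $Q$ on $F_1$, so that the model boundary operator $C_x^{(0)} = (e_0(x), \, e_1(x)\pa_\nu^{a(x)} + \sigma_1(Q_x))$ is genuinely the one whose left invertibility is equivalent to well-posedness of the model energy problem — and to do this with constants uniform over $x$, which is where the bounded-geometry hypotheses on $F_0, F_1$ and the $W^{\ell+1,\infty}$ regularity of all coefficients (including those of $a$ and $Q$) are essential; dropping bounded geometry breaks precisely this uniformity, consistent with the counterexamples promised in the introduction. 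A secondary technical point is handling the lower-order operators $Q_1$ and the zero-order part of $Q + Q^*$: these contribute only to the $\|u\|_{H^1}$ term on the right and to relatively compact perturbations of the model, so they do not affect the top-order Shapiro-Lopatinski analysis, but this needs to be stated cleanly, e.g. by an induction on $\ell$ in which at each stage the already-established lower regularity of $u$ absorbs the lower-order terms into the data.
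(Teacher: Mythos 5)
Your proposal is correct and follows essentially the same route as the paper: the paper's proof consists precisely of invoking Corollary \ref{cor.DR} (coercivity of the local model forms, Lax--Milgram, the Nirenberg trick of Theorem \ref{thm.wp}, and the Arzela--Ascoli compactness argument of Proposition \ref{prop.limit.reg}) together with Corollary \ref{cor.unif.elliptic} for the interior, and then Theorem \ref{thm.reg0}; your detour through the frozen-coefficient model problems and Theorem \ref{thm.SL} is an equivalent packaging of the same ingredients. The one point to adjust is in your first step: the theorem assumes neither finite width nor global well-posedness, so the Poincar\'e inequality is not available and Lax--Milgram cannot be applied to $\tilde P$ itself on $M$; the paper instead applies coercivity only to the \emph{local} model problems on $\RR^n_{+}$, where Lemma \ref{lemma.ccSL} adds a large zeroth-order constant $c$ to force coercivity without any Poincar\'e inequality --- exactly the ``lower-order perturbation'' you allude to, which is harmless for the regularity statement by Remark \ref{rem.reg}(iv).
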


In Subsection~\ref{ssec.saw}, we will see why, in general, the bounded
geometry assumption is useful. Many of our results extend to higher
order equations, however, including this would have greatly extended
the length of the paper. See nevertheless Remarks
\ref{rem.higher.order} and \ref{rem.in.general}.

\subsubsection*{Contents of the paper}
The results in this paper are a natural continuation of our joint
paper with Bernd Ammann \cite{AGN1}. In that paper, we established
some general geometric results on manifolds with boundary and bounded
geometry and dealt almost exclusively with Dirichlet boundary
conditions. Also, in \cite{AGN1} we restricted ourselves to the case
of the Laplace operator.  The main contribution of this paper is the
fact that we consider general uniformly elliptic operators and we work
under general boundary conditions.  This requires some new, specific
ideas and results. Further results are included in \cite{AGN3}.

Here are the contents of the paper. Section~\ref{sec2} contains some
basic definitions and some background results. Several of these
results are from \cite{AGN1}. For instance, in this section we review
the needed facts on manifolds with bounded geometry and we recall the
definition of Sobolev spaces using partitions of unity. Section
\ref{sec3} contains some preliminary results on Sobolev spaces and
differential operators. In particular, we provide a description of
Sobolev on manifolds with bounded geometry spaces using vector
fields. The first two sections can be skipped at a first reading by
the seasoned researcher. Section~\ref{sec.variational} introduces our
differential operators and boundary value problems, both of which are
given in a variational (i.e. weak) form. We show that, under some mild
conditions, all non-degenerate boundary conditions are equivalent to
variational ones, which justifies us to concentrate on the later.
Also in this section we formulate the regularity condition. The last
three sections describe the three methods explained above and their
applications. In the last section, in addition to discussing the third
method mentioned above (``Well-posedness in energy spaces''), we also
introduce a \emph{uniform Agmon condition}, which, we show, is
equivalent to the coercivity of the operator. This then leads to an
application to the well-posedness of some parabolic equations on
manifolds with bounded geometry.

\paragraph{\textbf{Acknowledgments}}
We would like to thank Bernd Ammann for several useful discussions. We
also gladly acknowledge the hospitality of the SFB 1085 Higher
Invariants at the Faculty of Mathematics at the University of
Regensburg where parts of this article was written. We are also very
grateful to Alexander Engel and Mirela Kohr for carefully
reading our paper and for making several very useful comments.

\section{Background material and notation} \label{sec2}

We begin with some background material, for the benefit of the reader.
More precisely, we introduce here the needed function spaces and we
recall the needed results from \cite{AGN1} as well as from other
sources.  We also use this opportunity to fix the notation, which is
standard, so this section can be skipped at a first reading. This
section contains no new results. \emph{Throughout this paper, $M$ will
  be a (usually non-compact) connected smooth manifold, possibly with
  boundary.}

\subsection{General notations and definitions}\label{ssec.not} 
We begin with the most standard concepts and some notation.

\subsubsection{Continuous operators} 
Let $X$ and $Y$ be Banach spaces and $A \colon X \to Y$ be a linear
map. Recall that $A$ is \emph{continuous} (or \emph{bounded}) if, and
only if, $\|A\|_{{X,Y}} \define \inf_{x \neq 0} \frac{\|Ax\|}{\|x\|} <
\infty$. When the spaces on which $A$ acts are clear, we shall drop
them from the notation of the norm, thus write $\|A\| =
\|A\|_{{X,Y}}$. We say that $A$ is an \emph{isomorphism} if it is a
continuous bijection (in which case the inverse will also be
continuous, by the open mapping theorem).

\subsubsection{The conjugate dual spaces} 
For complex vector spaces $V$ and $W$, a \emph{sesquilinear} map
$V\times W\to \CC$ will always be (complex) linear in $V$ and
anti-linear in $W$. A \emph{Hermitian form} on $V$ is a positive
definite sesquilinear map $(\,\cdot\,,\,\cdot\,)\colon V\times V\to
\CC$, which implies $(w,v)=\overline{(v,w)}$. In order to deal with
the complex version of the Lax--Milgram lemma, we introduce the
following notation and conventions.

\begin{notation}\label{not.*}\normalfont{
Let $V$ be a complex vector space, usually a Hilbert space. We shall
denote by $\overline{V}$ the complex conjugate vector space to $V$.
If $V$ is endowed with a topology, we denote by $V'$ the (topological)
dual of $V$. It will be convenient to denote $V^* \define
(\overline{V})' \simeq \overline{V'} $. We can thus regard a
continuous sesquilinear form $ B \colon V \times W \to \CC$ as a
continuous bilinear form on $V \times \overline{W}$, or, moreover, as
a map $V \to W^*$. Let us assume that $V$ is a Hilbert space with
inner product $( \, , \, ) \colon V \times V \to \CC$.  If $T\colon V
\to W$ is a continuous map of \emph{Hilbert} spaces, then we denote by
$T^* \colon W^* \to V^*$ the \emph{adjoint} of $T$, as usual.}
\end{notation}

\subsubsection{Vector bundles} 
Let $E \to M$ be a smooth real or complex vector bundle endowed with
metric~$(\, ,\, )_E$. We denote by $\Gamma(M; E)$ the set of
\emph{smooth} sections of $E$. Let $E$ be endowed with a connection
\begin{equation*}
  \nabla^E \colon \Gamma(M; E) \to \Gamma(M; E \otimes T^{*}M).
\end{equation*}
We assume that $\nabla^E$ is metric preserving, which means that
\begin{equation*}
   X ( \xi, \eta )_E = ( \nabla_X \xi, \eta )_E + ( \xi, \nabla_X
   \eta )_E.
\end{equation*}
If $F \to M$ is another vector bundle with connection $\nabla^F$, then
we endow $E \otimes F$ with the induced product connection $\nabla^{E
  \otimes F}(\xi \otimes \eta) \define (\nabla^{E} \xi) \otimes \eta +
\xi \otimes \nabla^{F}\eta$.

We endow the tangent bundle $TM\to M$ with the Levi-Civita connection
$\nabla^M$, which is the unique torsion free, metric preserving
connection on $TM$.  We endow $T^*M$ and all the tensor product
bundles $E \otimes T^{*\otimes k}M \define E \otimes (T^*M)^{\otimes
  k}$ with the induced tensor product connections (we write
  $V^{\otimes k} := V \otimes V \otimes \ldots \otimes V$,
  $k$-times).

\subsection{Manifolds with boundary and bounded geometry}\label{sec.bg}
We now recall some basic material on manifolds with boundary and
bounded geometry, mostly from \cite{AGN1}, to which we refer for more
details and references.

\begin{definition}\label{def_ttly_bdd_curv} 
A vector bundle $E \to M$ with given connection is said to \emph{have
  totally bounded curvature} if its curvature and all its covariant
derivatives are bounded (that is, $\|\nabla^k R^E\|_\infty < \infty$
for all $k$). If $TM$ has totally bounded curvature, we shall then say
that $M$ has \emph{totally bounded curvature}.
\end{definition}

Let $\exp_p^M\colon T_pM \to M$ be the exponential map at $p$
associated to the metric and
\begin{align*}
\begin{gathered}
  \rinj(p) \define \sup\{ r \mid \exp_p^M \colon B_r^{T_pM}(0) \to M
  \text{ is a diffeomorphism onto its image} \}\\
 \rinj(M) \define \inf_{p \in M} \rinj(p).
\end{gathered}
\end{align*}

The following concept is classical and fundamental.

\begin{definition}\label{def_bdd_geo_no_bdy}
A Riemannian manifold without boundary $(M,g)$ is said to be of
\emph{bounded geometry} if $\rinj(M)>0$ and if $M$ has \emph{totally
  bounded curvature}.
\end{definition}

If $M$ has boundary, clearly $\rinj(M)=0$, so a manifold with non-empty
boundary will never have bounded geometry in the sense of the above
definition. However, Schick has found a way around this difficulty
\cite{Schick2001}. Let us recall a definition equivalent to his
following \cite{AGN1}. The main point is to describe the boundary as a
suitable submanifold of a manifold without boundary and with bounded
geometry.  Let us consider then a hypersurface $\subset M$, i.e.\ a
submanifold with $\dim H=\dim M -1$. Assume that $H$ carries a
globally defined unit normal vector field $\nu$ and let
$\exp^\perp(x,t) \define \exp^M_x(t\nu_x)$ be the exponential in the
direction of the chosen unit normal vector.  By $\II^N$ we denote the
\emph{second fundamental form} of $N$ (in $M$: $\II^N(X,Y)\nu \define
\nabla_XY - \nabla^{N}_XY$).

\begin{definition}\label{hyp_bdd_geo} 
Let $(M^{m},g)$ be a Riemannian manifold of bounded geometry with a
hypersurface $H = H^{m-1}\subset M$ and a unit normal field $\nu$ on
$H$. We say that $H$ is a \emph{bounded geometry hypersurface} in $M$
if the following conditions are fulfilled:
\begin{enumerate}[(i)]
\item $H$ is a closed subset of $M$;
\item $\|(\nabla^H)^k \II^H \|_{L^\infty} < \infty$ for all $k\geq 0$;
\item $\exp^\perp\colon H\times (-\delta,\delta)\to M$ is a
  diffeomorphism onto its image for some $\delta > 0$.
\end{enumerate}
We shall denote by $r_\pa$ the
largest value of $\delta$ satisfying this definition.
\end{definition}

We have shown in \cite{AGN1} that $(H, g|_H)$ is then a manifold of
bounded geometry in its own right. See also \cite{ElderingCR,
  ElderingThesis} for a larger class of submanifolds of manifolds with
bounded geometry.
  
\begin{definition}\label{def_bdd_geo}  
  A Riemannian manifold~$M$ with (smooth) boundary has \emph{bounded
    geometry} if there is a Riemannian manifold $\widehat M$ with
  bounded geometry satisfying
\begin{enumerate}[(i)]
\item $M$ is contained in $\widehat M$;
\item $\partial M$ is a bounded geometry hypersurface in $\widehat M$.
\end{enumerate}
\end{definition}

\begin{example} Lie manifolds have bounded geometry 
\cite{sobolev, aln1}. It follows that, Lie manifolds with
boundary are manifolds with boundary and bounded geometry.
\end{example}

For $x, y \in M$, we let $\dist(x, y)$ denote the distance between $x$
and $y$ with respect to the metric $g$. 

\subsection{Coverings, partitions of unity, and Sobolev spaces}
\label{ssec.cov}
We now introduce some basic notation and constructions for manifolds
with boundary and bounded geometry. The boundary $\pa M$ of a manifold
with boundary $M$ will always be a subset of $M$: i.e.  $\pa M \subset
M$. \emph{For the rest of the paper, we will always assume that $E\to
  M$ has totally bounded curvature, and, for the rest of this section,
  we shall assume that $M$ is a manifold with boundary and bounded
  geometry.}
  
Let $\nu$ be the inner unit normal vector field of $\partial M$ and
let $r_{\pa}$ as in the bounded geometry condition for $\pa M$ in $M$,
Definition~\ref{hyp_bdd_geo}.  Moreover, for a metric space $X$, we
shall denote by $B_r^X(p)$ the open ball of radius $r$ centered at $p$
and set $B_r^m(p)\ede B_r^{\mathbb R^m}(p)$. We shall identify $T_p M$
with $\RR^{m+1}$ and, respectively, $T_p \pa M$ with $\RR^{m}$, using
an orthonormal basis, thus obtaining a diffeomorphism
$\exp^{M}_p\colon B^{m+1}_{r}(0) \to B^{M}_r(p)$. For $r< \frac{1}{2}
\min\{ \rinj(\pa M), \rinj(M), r_\partial\}$ we define the maps
\begin{align*}
\left\{\begin{aligned} &\kappa_p \colon B^{m}_{2r}(0) \times [0,
  2r)\to M,&& \kappa_p(x, t) \define \exp^M_{q}(t\nu_{q}),&& \text{if
  } p \in \pa M,\ q \define \exp_{p}^{\pa M}(x)\\
  &\kappa_p \colon B^{m+1}_{r}(0) \to M,&& \kappa_p(v) \ede
 \exp_p^{M}(v),&& \text{if } \dist(p, \pa M) \geq r,
\end{aligned}\right.
\end{align*}
with range 
\begin{equation}\label{eq.Ugamma}
 U_p(r) \ede
 \begin{cases}
  \kappa_p(B^{m}_{2r}(0) \times [0,2r)) \subset M & \ \mbox{ if } p
    \in \pa M\\
  \kappa_p(B^{m+1}_{r}(0)) \ =\ \exp_p^{M}(B^{m+1}_{r}(0)) \subset M
  \ & \ \mbox{ otherwise.}
 \end{cases}
\end{equation}

\begin{definition}\label{FC-chart}
Let $r_{FC} \define \min\left\{\, \frac{1}{2} \rinj (\partial M), \,
\frac{1}{4} \rinj (M),\, \frac{1}{2} r_\partial\, \right\}$ and $0 < r
\le r_{FC}$. Then $\kappa_p \colon B^{m}_{r}(0) \times [0,r) \to
  U_p(r)$ is called a \emph{Fermi coordinate chart} at $p\in \partial
  M$.  The charts $\kappa_p$ for $\dist(p, \pa M) \geq r$ are called
  \emph{geodesic normal coordinates}.
\end{definition}

To define our Sobolev spaces, we need suitable coverings of our
manifold. For the sets in the covering that are away from the
boundary, we will use geodesic normal coordinates, whereas for the
sets that intersect the boundary, we will use the Fermi coordinates
introduced in Definition~\ref{FC-chart}.

\begin{definition}\label{FC}  
Let $M$ be a manifold with boundary and bounded geometry and let $0 <
r\leq r_{FC} \define \min\left\{\, \frac{1}{2} \rinj (\partial M), \,
\frac{1}{4} \rinj (M),\, \frac{1}{2}r_\partial\, \right\}$, as in
Definition~\ref{FC-chart}.  A subset $\{p_\gamma\}_{\gamma \in \NN}$
is called an \emph{$r$-covering subset of $M$} if the following
conditions are satisfied:
\begin{enumerate}[(i)]
\item For each $R>0$, there exists $N_R \in \NN$ such that, for each
  $p \in M$, the set $\{\gamma \in \NN\vert\, \dist(p_\gamma, p) <
  R\}$ has at most $N_R$ elements.
 \item For each $\gamma \in \NN$, we have either $p_\gamma \in \pa M$
   or $\dist (p_\gamma, \pa M) \geq r$, so that $U_\gamma \define
   U_{p_\gamma}(r)$ is defined, compare to \eqref{eq.Ugamma}.
 \item $M \subset \cup_{\gamma = 1}^{\infty} U_{\gamma}$.
\end{enumerate}
\end{definition}

\begin{remark}
If $0 < r < r_{FC}$, then we can always find an $r$-covering subset of
$M$, since $M$ is a manifold with boundary and bounded geometry
\cite[Remark~4.6]{GrosseSchneider}.  Moreover, it then follows
from (i) of Definition~\ref{FC} that the coverings $\{U_\gamma\}$ of
$M$ and $\{U_\gamma\cap \pa M\}$ of $\pa M$ are uniformly locally
finite.
\end{remark}

We shall need the following class of partitions of unity defined using
$r$-covering sets. Recall the definition of the sets $U_\gamma \ede
U_{p_\gamma}(r)$ from Equation~\eqref{eq.Ugamma}.

\begin{definition} \label{def_part}
A partition of unity $\{\phi_\gamma\}_{\gamma \in \NN}$ of $M$ is
called \emph{an $r$-uniform partition of unity associated to the
  $r$-covering set $\{p_\gamma\} \subset M$}, see Definition~\ref{FC},
if the support of each $\phi_\gamma$ is contained in $U_\gamma$ and
$\sup_{\gamma} \|\phi_\gamma\|_{W^{\ell,\infty}(M)} < \infty$ for each
fixed $\ell \in \ZZ_{+}$.
\end{definition}

In order to deal with boundary value problems with values in a vector
bundle (that is, with boundary value problems for systems), we will
also need the concept of synchronous trivializations, which we briefly
recall here:

\begin{definition}\label{def_sync}
Let $M$ be a Riemannian manifold with boundary and bounded geometry,
and let $E\to M$ be a Hermitian vector bundle with metric
connection. Let $(U_{\gamma} , \kappa_\gamma , \phi_\gamma)$ be Fermi
and geodesic normal coordinates on $M$ together with an associated
$r$-uniform partition of unity as in the definitions above.  If
$p_\gamma\in M\setminus U_r(\pa M)$, then $E|_{U_{\gamma}}$ is
trivialized by parallel transport along radial geodesics emanating
from $p_\gamma$. If $p_\gamma\in \pa M$, then we trivialize
$E|_{U_{\gamma}}$ as follows: First we trivialize $E|_{U_{\gamma}\cap
  \partial M}$ along the underlying geodesic normal coordinates on
$\pa M$.  Then, we trivialize by parallel transport along geodesics
emanating from $\pa M$ and being normal to $\pa M$.  The resulting
trivializations are called \emph{synchronous trivializations along
  Fermi coordinates} and are maps
\begin{align}\label{eq.synchronous}
  \xi_\gamma\colon \kappa_{\gamma}^{-1}(U_{\gamma})\times \CC^t \to
  E|_{U_{\gamma}}
  \end{align}
where $t$ is the rank of $E$.
\end{definition}

We shall need a definition of Sobolev spaces using partitions of unity
and ``Fermi coordinates'' \cite{GrosseSchneider} and a few
standard results. In the scalar case, these results were stated in
\cite{AGN1}. Here we stress the vector valued case. First, we have the
following proposition that is a direct consequence of Theorems 14 and
26 in \cite{GrosseSchneider}.

\begin{proposition}\label{prop.part.unit}
Let $M$ be a Riemannian manifold with boundary and bounded geometry.
Let $\{\phi_\gamma\}$ be an $r$-uniform partition of unity associated
to an $r$-covering set $\{p_\gamma\} \subset M$ and let $\kappa_\gamma
= \kappa_{p_\gamma}$ be as in Definition~\ref{FC-chart}.  Let $E \to
M$ be a vector bundle with totally bounded curvature with
trivializations $\xi_\gamma$ as in Definition~\ref{def_sync}. Then
  \begin{equation*}
  ||| u |||^{p} \define \sum_{\gamma} 
  \|\xi_\gamma^*(\phi_\gamma u)\|_{W^{s,p}}^{p}
 \end{equation*}
defines a norm equivalent to the standard norm on $W^{s,p}(M;E)$,
$s\in \RR$, $1<p<\infty$.
\end{proposition}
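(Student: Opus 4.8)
The plan is to reduce the equivalence of $|||\cdot|||$ with the standard $W^{s,p}$-norm to the two cited theorems of \cite{GrosseSchneider} by carefully matching their hypotheses. I would proceed in three stages. First, I would fix the data: an $r$-covering set $\{p_\gamma\}$ with $0 < r < r_{FC}$, the associated $r$-uniform partition of unity $\{\phi_\gamma\}$, the Fermi and geodesic-normal charts $\kappa_\gamma$, and the synchronous trivializations $\xi_\gamma$ of $E$ from Definition~\ref{def_sync}. The point is that all of this structure is \emph{uniform} in $\gamma$: the charts have uniformly bounded geometry (by bounded geometry of $M$ and of $\pa M$), the transition functions $\kappa_\gamma^{-1}\circ\kappa_{\gamma'}$ and their derivatives are bounded uniformly by constants depending only on $r$ and the geometry (this is where the covering-multiplicity bound $N_R$ and Definition~\ref{hyp_bdd_geo} enter), and $\sup_\gamma\|\phi_\gamma\|_{W^{\ell,\infty}} < \infty$ for every $\ell$ by Definition~\ref{def_part}. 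I would likewise record that, because $E$ has totally bounded curvature, the parallel-transport trivializations $\xi_\gamma$ have all derivatives of their transition functions bounded uniformly in $\gamma$; this is the vector-valued refinement over \cite{AGN1}, and it follows from the standard estimate that the Christoffel symbols of a connection with totally bounded curvature are bounded in $C^k$ in normal/Fermi-synchronous gauge.

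Second, I would invoke Theorem~14 of \cite{GrosseSchneider} to get one inequality: for $u \in W^{s,p}(M;E)$ each local piece $\xi_\gamma^*(\phi_\gamma u)$ lies in $W^{s,p}$ of the model domain ($B^{m+1}_r(0)$ in the interior case, $B^m_r(0)\times[0,r)$ in the boundary case), with
\[
  \sum_\gamma \|\xi_\gamma^*(\phi_\gamma u)\|_{W^{s,p}}^p \;\le\; C\,\|u\|_{W^{s,p}(M;E)}^p ,
\]
the constant $C$ coming from the uniform bounds on $\phi_\gamma$, on the charts, and on the trivializations; here the finite-overlap property (i) of Definition~\ref{FC} is what keeps the sum under control. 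For the reverse inequality I would invoke Theorem~26 of \cite{GrosseSchneider}: given a family $(u_\gamma)$ of local sections with $\sum_\gamma\|u_\gamma\|_{W^{s,p}}^p<\infty$, the sum $\sum_\gamma (\kappa_\gamma)_*(\xi_\gamma u_\gamma)$, suitably interpreted, defines an element of $W^{s,p}(M;E)$ with norm controlled by that $\ell^p$-sum; applying this to $u_\gamma = \xi_\gamma^*(\phi_\gamma u)$ and using $\sum_\gamma \phi_\gamma \equiv 1$ to reconstruct $u$ gives $\|u\|_{W^{s,p}(M;E)}^p \le C'\,|||u|||^p$. Combining the two inequalities yields the asserted equivalence. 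One should check that the two theorems from \cite{GrosseSchneider}, stated there in the scalar case, apply verbatim once $E$ is trivialized synchronously — the transition functions then enter only as multipliers in $W^{s,\infty}_{\mathrm{loc}}$ with uniform bounds, so the scalar estimates pass to the $\CC^t$-valued setting componentwise, up to a constant depending on $t$ and those multiplier bounds.

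The main obstacle I anticipate is \emph{not} the abstract gluing but the bookkeeping near the boundary. For charts with $p_\gamma\in\pa M$ the model domain is the half-space slab $B^m_{2r}(0)\times[0,2r)$ and the chart $\kappa_\gamma$ mixes a geodesic-normal chart on $\pa M$ with the normal exponential $\exp^\perp$; one must verify that in these Fermi coordinates the metric, the volume density, and the bundle transition functions are still uniformly $C^k$-bounded with uniformly bounded inverses — this is exactly the content of the bounded-geometry-hypersurface conditions in Definition~\ref{hyp_bdd_geo}, so the work is to cite it correctly rather than to reprove it. A secondary subtlety is the meaning of $W^{s,p}$ for non-integer or negative $s$ on a manifold with boundary: one should make sure the definition used on the model slab is the one for which Theorems 14 and 26 of \cite{GrosseSchneider} are proved (e.g. restriction of $W^{s,p}(\RR^{m+1})$, or an intrinsic interpolation/duality definition), and that these agree; but since the statement of the proposition simply refers to ``the standard norm on $W^{s,p}(M;E)$'' defined via those same results, this is a matter of consistency of conventions rather than a genuine difficulty. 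I would close by remarking that independence of the equivalence constants from the particular choice of $r$-covering set and partition of unity follows from the same uniform bounds, which is what makes the norm $|||\cdot|||$ well-defined up to equivalence.
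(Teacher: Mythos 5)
Your proposal matches the paper's treatment: the paper offers no written proof but states the proposition as a direct consequence of Theorems~14 and~26 of \cite{GrosseSchneider}, which is exactly the reduction you carry out (with the additional, correct observation that the scalar statements pass to the vector-valued case via the uniformly bounded synchronous trivializations). Your elaboration of the uniformity bookkeeping is consistent with what those cited theorems require, so the argument is sound and essentially identical in route.
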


As in the scalar case \cite{AGN1}, the space $\Gamma_c(M; E)$ of
smooth, compactly supported sections of $E$ is dense in $W^{s,p}(M;
E)$, for $s\in \RR$ and $1<p<\infty$. This is obtained by truncating
the sum.  As usual, we shall let $H^\ell(M; E) \define W^{\ell, 2} (M;
E)$, $s \in \RR$.  Similarly, we shall need the following extension of
the trace theorem to the case of manifolds with boundary and bounded
geometry, see Theorem~27 in \cite{GrosseSchneider} (see
\cite{AGN1} for more references).

\begin{theorem}[Trace theorem]\label{thm.trace}
 Let $M$ be a manifold with boundary and bounded geometry and let $E
 \to M$ have totally bounded curvature. Then, for every $s > 1/2$, the
 restriction to the Dirichlet part of the boundary $\text{res}\colon
 \CIc(M) \to \CIc(\pa_D M)$ extends to a continuous, surjective map
 \begin{equation*}
  \operatorname{res}\colon  H^s(M; E) \, \to \, 
  H^{s-\frac{1}{2}}(\partial_D M; E).
 \end{equation*}
\end{theorem}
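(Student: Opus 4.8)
The plan is to localize by means of a uniform partition of unity and to reduce the statement to the classical trace theorem on the half-space $\RR^{m+1}_+ \define \RR^m \times [0,\infty)$, making sure at each step that the constants do not depend on the chart. First I would fix $r>0$ small, an $r$-covering set $\{p_\gamma\}\subset M$ as in Definition~\ref{FC}, an associated $r$-uniform partition of unity $\{\phi_\gamma\}$ (Definition~\ref{def_part}) whose supports sit well inside their charts, and the synchronous trivializations $\xi_\gamma$ of $E|_{U_\gamma}$ of Definition~\ref{def_sync}. By Proposition~\ref{prop.part.unit}, $\|u\|_{H^s(M;E)}^2$ is comparable to $\sum_\gamma \|\xi_\gamma^*(\phi_\gamma u)\|_{H^s}^2$, where each $\xi_\gamma^*(\phi_\gamma u)$ is a compactly supported $\CC^t$-valued function on a ball $B^{m+1}_r(0)$, or on a half-ball $B^m_{2r}(0)\times[0,2r)\subset\RR^{m+1}_+$ when $p_\gamma\in\pa M$. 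The key preliminary observation I would record is the compatibility of these data with the boundary: for $p_\gamma\in\pa M$ the Fermi chart $\kappa_\gamma$ restricted to $\{t=0\}$ is exactly the geodesic normal chart of $\pa M$ at $p_\gamma$, $\xi_\gamma|_{t=0}$ is exactly the synchronous trivialization of $E|_{\pa M}$ over $U_\gamma\cap\pa M$, and $\phi_\gamma|_{\pa M}=0$ whenever $\dist(p_\gamma,\pa M)\ge r$ (since then $U_\gamma\cap\pa M=\emptyset$). Hence $\{\phi_\gamma|_{\pa M}\}_{p_\gamma\in\pa M}$ is a uniform partition of unity on $\pa_D M$ — the Dirichlet part of the boundary being a closed-and-open subset of $\pa M$, it is again a manifold of bounded geometry and $E|_{\pa_D M}$ again has totally bounded curvature — so Proposition~\ref{prop.part.unit} also gives $\|h\|_{H^{s-1/2}(\pa_D M;E)}^2\asymp\sum_{p_\gamma\in\pa M}\|\xi_\gamma^*(\phi_\gamma|_{\pa M}h)\|_{H^{s-1/2}}^2$, with the further pointwise identity $\bigl(\xi_\gamma^*(\phi_\gamma u)\bigr)|_{t=0}=\xi_\gamma^*\bigl(\phi_\gamma|_{\pa M}\cdot\operatorname{res}u\bigr)$ for $u\in\CIc(M)$.

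Granting this dictionary, continuity is essentially immediate. I would apply the classical half-space trace estimate $\|v|_{t=0}\|_{H^{s-1/2}(\RR^m;\CC^t)}\le C_0\|v\|_{H^s(\RR^{m+1}_+;\CC^t)}$, with an absolute constant $C_0$ and for smooth compactly supported $v$, to each $v=\xi_\gamma^*(\phi_\gamma u)$ with $p_\gamma\in\pa M$, square, and sum over $\gamma$ (the terms with $p_\gamma\notin\pa M$ dropping out). Using the two norm equivalences above then yields $\|\operatorname{res}u\|_{H^{s-1/2}(\pa_D M;E)}\le C\|u\|_{H^s(M;E)}$ on $\CIc(M)$, so $\operatorname{res}$ extends to a bounded operator on $H^s(M;E)$.

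For surjectivity the plan is to run the same reduction backwards. I would fix once and for all a bounded right inverse $\maE\colon H^{s-1/2}(\RR^m;\CC^t)\to H^s(\RR^{m+1}_+;\CC^t)$ of the Euclidean trace, together with a cutoff $\chi(x,t)$ supported in $B^m_{2r}(0)\times[0,2r)$ and identically $1$ near $B^m_{\rho_0}(0)\times\{0\}$, where $\rho_0<2r$ is chosen uniformly in $\gamma$ (using the $r$-uniformity of $\{\phi_\gamma\}$) so that $\operatorname{supp}(\xi_\gamma^*(\phi_\gamma|_{\pa M}))\subset B^m_{\rho_0}(0)$. Given $h\in H^{s-1/2}(\pa_D M;E)$, I would write $h=\sum_{p_\gamma\in\pa M}\phi_\gamma h$, set $v_\gamma\define\chi\cdot\maE\bigl(\xi_\gamma^*(\phi_\gamma|_{\pa M}h)\bigr)$, which is supported inside the chart, satisfies $v_\gamma|_{t=0}=\xi_\gamma^*(\phi_\gamma|_{\pa M}h)$, and obeys $\|v_\gamma\|_{H^s}\le C_1\|\xi_\gamma^*(\phi_\gamma|_{\pa M}h)\|_{H^{s-1/2}}$ with $C_1$ depending only on $\maE$ and $\chi$; then I would transport $v_\gamma$ back through $\xi_\gamma$ and $\kappa_\gamma$ and extend by zero to obtain a section $u_\gamma\in H^s(M;E)$ supported in $U_\gamma$ with $\operatorname{res}u_\gamma=\phi_\gamma h$, and put $u\define\sum_{p_\gamma\in\pa M}u_\gamma$. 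This sum is locally finite by Definition~\ref{FC}(i); expanding $\phi_\delta u$ into the boundedly many nonzero pieces $\phi_\delta u_\gamma$ (those with $U_\gamma\cap U_\delta\neq\emptyset$) and invoking Proposition~\ref{prop.part.unit} once more bounds $\|u\|_{H^s(M;E)}^2$ by a constant times $\sum_\gamma\|v_\gamma\|_{H^s}^2$, hence by a constant times $\|h\|_{H^{s-1/2}(\pa_D M;E)}^2$; and $\operatorname{res}u=\sum_\gamma\operatorname{res}u_\gamma=\sum_\gamma\phi_\gamma h=h$ by continuity of $\operatorname{res}$ and local finiteness. Thus $\operatorname{res}$ is onto and in fact admits a bounded right inverse.

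The step I expect to require the most care is not any single inequality — those are all the classical half-space statements — but the uniformity bookkeeping: verifying that the local trace and extension constants are genuinely $\gamma$-independent, that the partition-of-unity norm equivalence of Proposition~\ref{prop.part.unit} may be applied simultaneously on $M$ and on $\pa_D M$ with \emph{compatible} charts and trivializations, and that the finite-overlap property of $\{U_\gamma\}$ and of $\{U_\gamma\cap\pa M\}$ controls the reassembly of the local pieces. This is precisely where the bounded geometry hypothesis, via Definition~\ref{def_bdd_geo} and the results imported from \cite{AGN1}, is indispensable; the resulting statement is close to Theorem~27 of \cite{GrosseSchneider}, and in the write-up one may either simply cite it or carry out the localization sketched above in detail.
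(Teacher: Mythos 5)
The paper does not prove this statement: it is quoted verbatim as background in Section~\ref{sec2} (which the authors announce ``contains no new results'') and is attributed to Theorem~27 of \cite{GrosseSchneider}. Your localization argument --- reduce to the half-space via Fermi charts, synchronous trivializations and the norm equivalence of Proposition~\ref{prop.part.unit}, apply the Euclidean trace and extension operators chart by chart, and reassemble using finite overlap --- is exactly the standard proof of that cited result, and it is correct. The two points you flag as needing care are indeed the only ones: (a) the compatibility you record (the Fermi chart at $t=0$ is the geodesic normal chart of $\pa M$, and $\xi_\gamma\vert_{t=0}$ is the synchronous trivialization of $E\vert_{\pa M}$) is built into Definitions~\ref{FC-chart} and \ref{def_sync}, so the restricted family $\{\phi_\gamma\vert_{\pa M}\}$ is an admissible partition of unity for Proposition~\ref{prop.part.unit} on $\pa M$; and (b) the requirement that the supports of the $\phi_\gamma$ sit uniformly inside the charts is not literally part of Definition~\ref{def_part}, but can always be arranged in the standard construction of $r$-uniform partitions of unity, so it is a harmless normalization rather than a gap. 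In the write-up you could either cite \cite{GrosseSchneider} as the paper does, or include your sketch; if you include it, state explicitly that $\pa_D M$ is an open and closed subset of $\pa M$ (hence itself of bounded geometry), since the paper uses this notation here without having defined it.
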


Let $\pa_{\nu}$ be the normal derivative at the boundary. We then
denote by $H^m_0(M; E)$ the kernel of the restrictions maps
$\operatorname{res}\circ \pa_{\nu}^j$, $0\leq j\leq m-1$. It is known
that $H^{-m}(M; E^*)$ identifies with $H^m_0(M; E)^*$.

See also \cite{Aubin, sobolev, Hebey1, HebeyBook, kordyukovLp1,
  kordyukovLp2, ShubinAsterisque, TriebelBG, TriebelGr} for related
results, in particular, for the use of the partitions of unity. See
\cite{EngelThesis} and its outgrowth \cite{EngelLongArt} for an
introduction to manifolds of bounded geometry. See
\cite{BrezisSobolev, JostBook, LionsMagenes1, Taylor1} for the general
results on Sobolev spaces not proved above.

\section{Preliminary results} \label{sec3}

We now include some preliminary results. This section consists mostly
new results, but advanced reader can nevertheless skip this section at
a first reading. \emph{In this section we assume that $M$ is a
  manifold with boundary and bounded geometry and that $E \to M$ is a
  vector bundle with totally bounded curvature (and, hence, with
  bounded geometry).}

\subsection{Alternative characterizations of Sobolev spaces}
\label{ssec.alt}
We now provide an alternative description of Sobolev spaces using
vector fields. If $A \colon V_1 \to V_2$ is a linear map of normed
spaces, we define its \emph{minimum reduced modulus} $\gamma(A)$ by
\begin{equation*}
  \gamma(A) \define \inf_{\xi\in V_1 / \ker (A)} \ \sup_{\eta
    \in \ker (A)}\, \frac{\|A\xi\|}{\|\xi+\eta\|}\, ,
\end{equation*}
i.e.\ $\gamma(A)$ is the largest number $\gamma$ satisfying $\|A \xi\|
\ge \gamma \|\xi + \ker(A)\|_{V_1/\ker (A)}$ for all $\xi\in V_1$
where $ \|\,\cdot\,\|_{V_1/\ker (A)}$ is the quotient norm on
$V_1/\ker (A)$. As it is well known, if $V_1$ is complete, a standard
application of the Open Mapping Theorem gives that the image of $A$ is
closed if, and only if, $\gamma(A) > 0$. If $V_1$ and $V_2$ are
Hilbert spaces and $A$ is surjective, we have $\gamma(A)^{-2} =
\|(AA^*)^{-1}\|$.

We have the following alternative characterization of Sobolev spaces
in terms of vector fields. This alternative definition is more
intuitive and easier to use in analysis. It is based on a the choice
of a suitable finite family of vector fields, whose existence is
assured by the following lemma.

\begin{lemma} \label{lemma.prop.exists}
Let $(M,g)$ be a manifold with boundary and of bounded geometry. Then
there exist vector fields $X_1, X_2, \ldots, X_N \in W^{\infty,
  \infty}(M; TM)$ such that for any $x\in M$ the map $\Phi_x\colon
\RR^N \ni (\lambda_1,\ldots,\lambda_N) \mapsto \sum \lambda_i X_i(x)
\in T_xM$ is onto at any $x$ and $\inf_x \gamma(\Phi_x) > 0$. Then we
have that $[X_i, X_j] = \sum_{k=1}^N C_{ij}^k X_k$ and
$\nabla_{X_i}X_j = \sum_{k=1}^N G_{ij}^k X_k$ for some functions
$G_{ij}^k, C_{ij}^k \in W^{\infty, \infty}$ and any $1 \leq i, j \leq
N$.  Moreover, we can choose $X_1$ of length one and normal to the
boundary of $M$ and $X_j$ tangent to the boundary for $j > 1$.
\end{lemma}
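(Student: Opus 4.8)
The plan is to build the vector fields locally on the uniform Fermi/geodesic charts and then patch them together with the $r$-uniform partition of unity $\{\phi_\gamma\}$ from Definition \ref{def_part}. First I would fix an $r$-covering set $\{p_\gamma\}$ and the associated charts $\kappa_\gamma$ as in Definition \ref{FC-chart}. On each chart $U_\gamma$ we have coordinate vector fields; for the boundary charts $\kappa_p(x,t)$ we take $\partial_t$ (which is the unit normal $\nu$ along $\pa M$ by construction of Fermi coordinates) together with the coordinate fields $\partial_{x^1},\dots,\partial_{x^m}$ tangent to the boundary, and for the interior charts the geodesic normal coordinate fields. Because $M$ has bounded geometry, the transition between any chart and the model Euclidean metric is controlled uniformly in $\gamma$ (this is exactly what bounded geometry buys us: uniform $C^k$-bounds on the metric and its inverse in these coordinates, from \cite{AGN1}), so these local fields and their covariant derivatives are uniformly bounded in $W^{\infty,\infty}$ on $U_\gamma$.

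The issue is that the coordinate fields on different charts are genuinely different, so summing $\sum_\gamma \phi_\gamma \cdot(\text{local fields})$ would produce infinitely many vector fields, not a fixed finite family $X_1,\dots,X_N$. The standard fix, which I would carry out here, is a uniform partition of the index set $\NN$: by the uniform local finiteness in Definition \ref{FC}(i), there is a fixed $K$ (depending only on the geometry, via $N_R$ for $R\sim r$) and a coloring $c\colon\NN\to\{1,\dots,K\}$ such that $U_\gamma\cap U_{\gamma'}=\emptyset$ whenever $c(\gamma)=c(\gamma')$ and $\gamma\neq\gamma'$. Then for each color $a$ and each ``slot'' $i\in\{0,1,\dots,m\}$ one sets $X_{a,i}\define\sum_{\gamma:\,c(\gamma)=a}\phi_\gamma\,(\kappa_\gamma)_*e_i$, where $e_0$ is the inward normal slot and $e_1,\dots,e_m$ the tangential slots; the disjointness of supports within a color makes this a \emph{locally finite} (in fact, at each point, single-term) sum, hence a genuine vector field in $W^{\infty,\infty}(M;TM)$, and tangency to $\pa M$ of the $i\ge1$ fields is preserved. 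To get a single $X_1$ of length one normal to the boundary, I would instead treat the normal direction separately: extend $\nu$ off a collar using $\partial_t$ in Fermi coordinates and a cutoff in $t$, renormalize to unit length, and call this $X_1$; for $j>1$ use the colored tangential construction above, so all of $X_2,\dots,X_N$ are tangent to $\pa M$.

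It remains to check surjectivity of $\Phi_x$ with a uniform lower bound on $\gamma(\Phi_x)$, and the structure equations. For $\Phi_x$: at any $x$, pick a $\gamma$ with $x$ in the ``core'' of $U_\gamma$ where $\phi_\gamma$ is bounded below by a uniform constant (such a $\gamma$ exists after shrinking $r$ and arranging the partition of unity suitably — this is part of what an $r$-uniform partition of unity gives); then among the $X_{a,i}$ with $a=c(\gamma)$ the values at $x$ are a uniformly-bounded-below multiple of the frame $(\kappa_\gamma)_*e_i(x)$, which is a basis of $T_xM$ with uniformly bounded distortion from an orthonormal basis by bounded geometry. Hence $\Phi_x$ is onto and $\gamma(\Phi_x)$ is bounded below uniformly. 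Finally, $[X_i,X_j]$ and $\nabla_{X_i}X_j$ are sections of $TM$ lying (pointwise) in the span of $\{X_k(x)\}$ since that span is all of $T_xM$; the coefficient functions $C_{ij}^k,G_{ij}^k$ can then be obtained as $C_{ij}^k=\sum_\ell (\Phi_x^*)_{k\ell}\langle[X_i,X_j],\text{(dual frame)}_\ell\rangle$-type expressions, and they lie in $W^{\infty,\infty}$ because $X_i\in W^{\infty,\infty}$, the connection coefficients are uniformly bounded (bounded geometry), and $\gamma(\Phi_x)$ being bounded below lets one invert $\Phi_x$ with uniform $W^{\infty,\infty}$ control. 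I expect the only genuinely delicate point to be the bookkeeping in the coloring/patching step — making sure the resulting finite family simultaneously spans everywhere, keeps $X_1$ exactly unit-normal, and keeps $X_j$ ($j>1$) exactly tangential — while all the uniform bounds are immediate consequences of bounded geometry and Proposition \ref{prop.part.unit}.
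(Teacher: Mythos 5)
Your proposal is correct and follows essentially the same route as the paper's proof: the same Fermi/geodesic charts and $r$-uniform partition of unity, the same coloring of the index set into finitely many classes of pairwise disjoint charts to get a finite family of globally defined fields, the same uniform lower bound on $\gamma(\Phi_x)$ via a chart with $\phi_\gamma(x)\geq c>0$, and the same use of uniformly bounded transition functions and Christoffel symbols for the structure functions $C_{ij}^k, G_{ij}^k$. The only cosmetic difference is in $X_1$: the paper simply sets $X_1=\sum_{U_\beta\cap\pa M\neq\varnothing}\phi_\beta\,\pa_r$ (which already equals the unit normal on $\pa M$ since the normal coordinate field $\pa_r$ agrees across overlapping boundary charts), rather than extending $\nu$ by a cutoff and renormalizing.
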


\begin{proof}
We use a covering by Fermi coordinates resp. geodesic normal
coordinates $(U_\beta, \kappa_\beta)$ as in Definition~\ref{FC}
together with an associated partition of unity $\phi_\beta$ as in
Definition~\ref{def_part}. We recall that all transition function
$\kappa_\alpha\circ \kappa_\beta^{-1}$, the $\phi_\beta$ and all their
derivatives are uniformly bounded, see \cite{GrosseSchneider,
  ShubinAsterisque}. Let $x_\beta^i$ be the coordinates corresponding
to the chart $\kappa_\beta$. Note that the Christoffel symbols in each
chart and their derivatives are also uniformly bounded due to the
bounded geometry, \cite[Lemma 3.10]{GrosseSchneider}.

Let those coordinates be ordered such that $x_\beta^1=r$, the distance
to the boundary, for $U_\beta\cap \partial M\neq \varnothing$. Let
$X_1\define \sum_{ U_\beta\cap \partial M\neq \varnothing} \phi_\beta
\partial_r$. Then, $X_1$ is normal to $\partial M$. 

To define $X_j$ for $j>1$ we divide $\{U_\beta\}_\beta$ into
finitely many disjoints subsets $V_i$, $i=1,\ldots, d$, such that the
$U_\beta$'s in any $V_i$ are pairwise disjoint. This is always possible 
since the covering $\{U_\beta\}_\beta$ is uniformly locally finite. We set
\begin{align*}
X_{i,j}&\define \sum_{U_\beta\in V_i} \phi_\beta \partial_{x_\beta^j} 
\quad \text{for }j\in \{2,\ldots, m\}\\
X_{i,1}&\define \sum_{U_\beta\in V_i, U_\beta\cap \partial M=\varnothing} 
\phi_\beta \partial_{x_\beta^1}
\end{align*}
Then, $X_1,X_{i,j}\in W^{\infty,\infty}(M,TM)$. Moreover, all
$X_{i,j}$ are tangent to the boundary by construction.  By renaming
the vector fields we obtain $X_1, \ldots, X_N$.  Then $\Phi_x$ is onto
by construction for all $x\in M$.

Next we show that the reduced minimum modulus is uniformly bounded
from below: Let $x\in M$. Since the covering is uniformly locally
finite, there is a chart $\kappa_\alpha$ around $x$ with
$\phi_\alpha(x)>c>0$, $c$ independent of $x$. Since
$\partial_{x_\alpha}^i$ span $T_xM$ for all $(\lambda_1,\ldots,
\lambda_N)\in \RR^N$ there is a $(\mu_1,\ldots, \mu_N)\in \RR^N$ with
$\mu_j=0$ if $X_j$ was not build from
$\kappa_\beta$. Thus, \[\gamma(\Phi_x)\geq \inf_{(\lambda_1,\ldots,
  \lambda_N)\in \RR^N} \frac{| \sum_i \mu_i X_i(x)|}{\sum_i |\mu_i|^2}
\geq \inf_{(\lambda_1,\ldots, \lambda_N)\in \RR^N} \frac{c \sum_i
  \mu_i^2 }{\sum_i \mu_i^2} =c.\]

We have $\partial_{x_\beta^j} =\sum_i a_j^i\partial_{x_\alpha^i}$ with
$a_j^i\in W^{\infty,\infty}$ with bounds independent of $\alpha$ and
$\beta$ since this is true for the transition functions
$\kappa_\alpha\circ \kappa_\beta^{-1}$. Thus, together with the
uniform bounds on the corresponding Christoffel symbols we get
$\nabla_{\partial_{x_\alpha^i}}\partial_{x_\beta^j} = \sum_{k}
b_{ij}^k \partial_{x_\alpha^k}$ with $b_{ij}^k\in
W^{\infty,\infty}$. Altogether, we have

\begin{align*}
  \nabla_{X_i}X_j & =\sum_{U_\beta\in V_i, U_\alpha\in V_j} \phi_\beta
  \nabla_{\partial_{x_\beta^i}} (\phi_\alpha
  \partial_{x_\alpha^j})=\sum_{U_\beta\in V_i, U_\alpha\in V_j,k}
  \phi_\beta (\delta_k^j \partial_{x_\beta^i} \phi_\alpha +
  \phi_\alpha b_{ij}^k) \partial_{x_\beta^k}\\
  & =\sum_{k} G_{ij}^k X_k\quad \text{with }G_{ij}^k=
  \sum_{U_\beta\in V_i, U_\alpha\in V_j}(\delta_k^j
  \partial_{x_\beta^i} \phi_\alpha + \phi_\alpha b_{ij}^k)\in
  W^{\infty,\infty}
\end{align*}
where we used in the last step that the supports of any two $U_\beta$
in $V_i$ are disjoint.

By, $[X_i,X_j]=\nabla_{X_i}X_j-\nabla_{X_j}X_i$ the remaining claim
follows.
\end{proof}

\begin{proposition}\label{prop.alternative}
Let $X_1, X_2, \ldots, X_N \in W^{\infty, \infty}(M; TM)$ be such that
for any $x\in M$ the map $\Phi_x\colon \RR^N \ni
(\lambda_1,\ldots,\lambda_N) \mapsto \sum \lambda_i X_i(x) \in T_xM$
is onto at any $x$ and $\inf_x \gamma(\Phi_x) > 0$. Then
\begin{equation*}
 W^{\ell, p} (M; E) = \{\, u \, \vert \ \nabla^E_{X_{k_1}}
 \nabla^E_{X_{k_2}} \ldots \nabla^E_{X_{k_j}} u \in L^p(M; E),\ j \leq
 \ell, \ k_1,\ldots,k_j \leq N \, \} ,
\end{equation*}
$1 \leq p \leq \infty$.
If $E$ has furthermore totally bounded curvature, we have also
\begin{equation*}
 W^{\ell, p} (M; E) = \{\, u \, \vert \ \nabla^E_{X_{k_1}}
 \nabla^E_{X_{k_2}} \ldots \nabla^E_{X_{k_j}} u \in L^p(M; E) \, \} ,
\end{equation*}
where, this time, $1 \leq k_1 \leq k_2\le \ldots \leq k_j \leq N$, $j \le
\ell$.
\end{proposition}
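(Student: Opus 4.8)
The plan is to prove the two characterizations of $W^{\ell,p}(M;E)$ by induction on $\ell$, reducing everything to the local statement in Fermi/geodesic normal coordinates, where the equivalence of the covariant-derivative norm with the usual Sobolev norm is classical (or follows from Proposition~\ref{prop.part.unit}). The base case $\ell = 0$ is trivial since both right-hand sides reduce to $L^p(M;E)$. For the inductive step, write $W_1 \define \{u : \nabla^E_{X_{k_1}}\cdots\nabla^E_{X_{k_j}}u \in L^p,\ j \le \ell,\ k_i \le N\}$ (unordered multi-indices) and $W_2$ for the analogous space with \emph{ordered} indices $1 \le k_1 \le \cdots \le k_j \le N$. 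Clearly $W_1 \subseteq W_2$, so the content is the reverse inclusions $W^{\ell,p} \subseteq W_1$ and (under the totally-bounded-curvature hypothesis) $W_2 \subseteq W^{\ell,p}$.

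First I would prove $W^{\ell,p}(M;E) \subseteq W_1$. Using an $r$-uniform partition of unity $\{\phi_\gamma\}$ subordinate to the covering $\{U_\gamma\}$ with synchronous trivializations $\xi_\gamma$, write $u = \sum_\gamma \phi_\gamma u$. By Proposition~\ref{prop.part.unit}, $\|u\|_{W^{\ell,p}}$ is equivalent to $\big(\sum_\gamma \|\xi_\gamma^*(\phi_\gamma u)\|_{W^{\ell,p}}^p\big)^{1/p}$. In each chart, $\nabla^E_{X_k}$ is a first-order differential operator whose coefficients (the local components of the $X_k$ together with the Christoffel symbols of $\nabla^E$) are bounded in $W^{\infty,\infty}$ uniformly in $\gamma$, by the bounded-geometry hypotheses on $M$ and $E$ and by $X_k \in W^{\infty,\infty}(M;TM)$. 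Hence applying up to $\ell$ such operators maps $W^{\ell,p}$ into $L^p$ with a uniform bound on each chart, and summing the $p$-th powers gives $\nabla^E_{X_{k_1}}\cdots\nabla^E_{X_{k_j}}u \in L^p(M;E)$ with a norm estimate. This direction does not need the curvature bound on $E$ beyond boundedness of the connection coefficients, which is why it appears first in the statement.

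For the reverse inclusion $W_2 \subseteq W^{\ell,p}$, the idea is again induction on $\ell$: given $u \in W_2$ at level $\ell$, I want to show each $\nabla^E_{X_k}u \in W^{\ell-1,p}$ and then invoke the inductive hypothesis together with the $\ell=1$ case (i.e.\ that control of all first covariant derivatives $\nabla^E_{X_k}u$ in $L^p$, plus $u \in L^p$, gives $u \in W^{1,p}$ — which follows because the $X_k$ span $T_xM$ uniformly, so $\nabla^E u$ is pointwise a bounded linear combination of the $\nabla^E_{X_k}u$). The obstacle is that $W_2$ only constrains \emph{ordered} iterated derivatives, whereas applying the inductive hypothesis to $\nabla^E_{X_k}u$ requires controlling ordered products $\nabla^E_{X_{k_1}}\cdots\nabla^E_{X_{k_{\ell-1}}}(\nabla^E_{X_k}u)$, i.e.\ products in which the newly appended index $k$ sits at the \emph{wrong end}. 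Here is where the commutation relations from Lemma~\ref{lemma.prop.exists} enter: $[\nabla^E_{X_i},\nabla^E_{X_j}] = \nabla^E_{[X_i,X_j]} + R^E(X_i,X_j) = \sum_k C_{ij}^k \nabla^E_{X_k} + R^E(X_i,X_j)$, where $C_{ij}^k \in W^{\infty,\infty}$ and, crucially, $R^E(X_i,X_j)$ is a zero-order term that is bounded in $W^{\infty,\infty}$ \emph{precisely because $E$ has totally bounded curvature} — this is the one place the stronger hypothesis on $E$ is used. Repeatedly applying this commutation rule lets one rewrite any unordered product of $j$ covariant derivatives as a $W^{\infty,\infty}$-combination of ordered products of length $\le j$ (the lower-order terms coming from $C_{ij}^k$ and from $R^E$), by a standard induction on $j$ ordered, say, lexicographically with the number of inversions as the decreasing quantity. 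Consequently $W_2 = W_1$, and then the first characterization $W^{\ell,p} = W_1$ already established closes the argument. The main technical point to be careful about is bookkeeping the uniform $W^{\infty,\infty}$-bounds on all the structure functions $C_{ij}^k$, $G_{ij}^k$, the Christoffel symbols of $\nabla^E$, and the curvature components through the commutation induction, so that the final norm equivalence has constants independent of $x$; all of these bounds are guaranteed by Lemma~\ref{lemma.prop.exists} and the bounded-geometry/totally-bounded-curvature assumptions.
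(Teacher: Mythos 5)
Your proposal is correct and follows essentially the same route as the paper: the easy inclusion from boundedness of the $\nabla^E_{X_k}$, the converse from the uniform lower bound $\inf_x\gamma(\Phi_x)>0$ (which makes the pointwise right inverse $\Phi_x^*(\Phi_x\Phi_x^*)^{-1}$ uniformly bounded, so $\nabla^E u$ is recovered from the $\nabla^E_{X_k}u$), an induction on $\ell$ absorbing the lower-order correction terms, and the ordered version via the commutation relations of Lemma~\ref{lemma.prop.exists} together with the totally bounded curvature of $E$ controlling $R^E(X_i,X_j)$. The only cosmetic difference is that the paper handles all $\ell$ derivatives at once through the tensor-power map $\Phi^{\otimes\ell}$ while you peel off one covariant derivative at a time; also note that your closing sentence should read that the induction itself establishes $W_1\subseteq W^{\ell,p}$ (only $W^{\ell,p}\subseteq W_1$ was ``already established'' at that point), which combined with $W_2=W_1$ yields both characterizations.
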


We notice that the condition that $\Phi_x \colon \RR^N \to T_x M$ be
surjective implies that $\dim \ker \Phi_x=N-\dim M$ is constant.  For
compact $M$ then $\inf_{x\in M}\gamma(\Phi_x) > 0$ is automatically
satisfied.
\begin{proof} Let 
\begin{equation*}
 \maX \define \{\, u \, \vert \ \nabla^E_{X_{k_1}} \nabla^E_{X_{k_2}}
 \ldots \nabla^E_{X_{k_j}} u \in L^p(M; E),\ j \leq \ell,
 \ k_1,\ldots,k_j \leq N \}.
\end{equation*}
The fact that the vector fields $X_k$ are bounded with bounded
covariant derivatives gives that $\nabla_{X_k}\colon W^{j+1, p} (M; E)
\to W^{j, p} (M; E)$ is bounded for any $j \geq 0$ and any~$p$. This
gives $W^{\ell, p} (M; E) \subset \maX$.

To prove the converse, we proceed by induction as follows. First of
all, the assumption that $\inf_x \gamma(\Phi_x) > 0$ together with the
fact that $\Phi_x$ is surjective for all~$x$ gives that
$\Phi_x^*(\Phi_x \Phi_x^*)^{-1}$ is a (pointwise bounded) right
inverse to $\Phi_x$, regarded as a map $(\underline{\RR}^N)^{\otimes
  \ell} \to TM^{\otimes \ell}$. (Here $\underline{\RR}^N$ is the {\em
  trivial} vector bundle of rank $N$.) Let us denote by $\Psi$ the
extension of this map to sections of the corresponding vector bundles,
and, by abuse of notation, also by
\begin{equation*}
 \Psi \colon L^p(M; T^*M^{\otimes \ell} \otimes E) \to L^p(M;
 (\underline{\RR}^N)^{\otimes \ell} \otimes E)
\end{equation*}
the induced maps. Then $\Psi$ is continuous and a right inverse of
(the map defined by) $\Phi$. Therefore $\Psi$ is a homeomorphism onto
its image.  That gives that $\Psi(\xi) \in L^p(M;
(\underline{\RR}^N)^{\otimes \ell} \otimes E)$ if, and only if, $\xi
\in L^p(M; T^*M^{\otimes \ell} \otimes E)$. By taking the $(k_1, k_2,
\ldots, k_{\ell})$ component of $\Psi(\xi)$, we obtain that
$(\nabla^E)^\ell u \in L^p(M; T^*M^{\otimes \ell} \otimes E)$ if, and
only if, $\<(\nabla^E)^\ell u , X_{k_1} \otimes X_{k_2} \otimes \ldots
\otimes X_{k_\ell}\> \in L^p(M; E)$ for all $k_1, k_2, \ldots, k_\ell
\in \{ 1, 2, \ldots , N\}$.  To use induction, we notice that $\<
(\nabla^E)^\ell u , X_{k_1} \otimes X_{k_2} \otimes \ldots \otimes
X_{k_\ell}\> - \nabla^E_{X_{k_1}} \nabla^E_{X_{k_2}} \ldots
\nabla^E_{X_{k_\ell}} u $ is given by lower order terms of the same
kind.

For the last part, we also notice that we can commute $\nabla^E_{X_i}$
with $\nabla^E_{X_j}$ up to totally bounded terms using Lemma
\ref{lemma.prop.exists}.
\end{proof}

\subsection{Sobolev spaces without using connections}
Recall that we assume that $E\to M$ has totally bounded
  curvature. One way to think of such vector bundles is given by the
following lemma:

\begin{lemma}\label{lemma.embedding} Let us assume that $M$ is a manifold
with bounded geometry (possibly with boundary) and that $E \to M$ is a
Hermitian vector bundle of totally bounded curvature. Then there
exists a fibrewise isometric embedding $E \subset M \times \CC^N$ into
the trivial $N$-dimensional vector bundle with the standard metric
such that, if $e$ denotes the orthogonal projection onto $E$, then $e
\in M_N(W^{\infty, \infty}(M))$ and the connection of $E$ is
equivalent to the Grassmann (projection) connection of the embedding
(i.e. the difference of both connections is in
$W^{\infty,\infty}$). Conversely, if $e \in M_N(W^{\infty,
  \infty}(M))$ and $E \define e (M \times \CC^N)$, then $E$ with the
Grassmann connection has totally bounded curvature.
\end{lemma}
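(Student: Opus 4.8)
The plan is to establish the forward direction by a partition-of-unity patching argument and then verify the (easier) converse by a direct computation. For the forward direction, I would start from an $r$-covering $\{p_\gamma\}$ of $M$ by Fermi and geodesic normal charts $(U_\gamma, \kappa_\gamma)$, with an associated $r$-uniform partition of unity $\{\phi_\gamma\}$ and synchronous trivializations $\xi_\gamma \colon \kappa_\gamma^{-1}(U_\gamma) \times \CC^t \to E|_{U_\gamma}$ as in Definition~\ref{def_sync}. On each $U_\gamma$ the bundle $E$ is trivial, and the key quantitative input is that all transition functions between overlapping synchronous trivializations, together with all their derivatives, are uniformly bounded — this follows from the bounded geometry of $M$ and the totally bounded curvature of $E$ (so the connection coefficients in each synchronous gauge lie in $W^{\infty,\infty}$ with bounds independent of $\gamma$). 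Using the splitting of $\{U_\gamma\}$ into finitely many families $V_1, \dots, V_d$ of pairwise disjoint sets (exactly as in the proof of Lemma~\ref{lemma.prop.exists}, available because the covering is uniformly locally finite), I would assemble a global map $M \to M \times \CC^N$ with $N = d\cdot t$ by sending $x \in U_\gamma$, $U_\gamma \in V_i$, to $\sqrt{\phi_\gamma(x)}$ times the coordinate vector of a section in the block $\CC^t$ corresponding to $V_i$. Because the $U_\gamma$ in a fixed $V_i$ are disjoint, this is well defined block by block; and because $\sum_\gamma \phi_\gamma \equiv 1$ it is fibrewise isometric onto its image, which is a subbundle $E' \cong E$.

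Next I would identify the orthogonal projection $e$ onto (the image of) $E$: in terms of the data above, $e = \sum_{i=1}^d \sum_{U_\gamma \in V_i} \phi_\gamma\, (\text{block-}i\text{ projector built from }\xi_\gamma)$, and the uniform $W^{\infty,\infty}$ bounds on $\phi_\gamma$, on the trivializations, and on their overlaps give $e \in M_N(W^{\infty,\infty}(M))$. To compare connections: the Grassmann connection of the embedding is $\nabla^{\mathrm{Gr}} = e \circ d \circ e$ acting on sections of $E' \subset M \times \CC^N$, and under the isometry $E \cong E'$ its difference with $\nabla^E$ is measured locally, on each $U_\gamma$, by the connection one-form of $\nabla^E$ in the synchronous gauge minus that of $\nabla^{\mathrm{Gr}}$; both are controlled by derivatives of the $\phi_\gamma$ and of the transition functions, hence this difference is a tensor lying in $W^{\infty,\infty}$, again with $\gamma$-independent bounds. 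This is where one must be a little careful — keeping track that the various local estimates glue to a genuine global $W^{\infty,\infty}$ bound — and I expect this bookkeeping (rather than any conceptual difficulty) to be the main obstacle: one has to be sure that the finitely-many-colors trick really does convert "uniformly bounded on each patch, uniformly locally finite overlaps" into a global sup bound on all covariant derivatives.

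Finally, for the converse, suppose $e \in M_N(W^{\infty,\infty}(M))$ is an orthogonal projection and $E \define e(M \times \CC^N)$ with the Grassmann connection $\nabla^{\mathrm{Gr}} u = e\, d(eu)$. Its curvature is the standard formula $R^{\mathrm{Gr}} = e\, (de)\wedge(de)\, e$ (equivalently $e\,(d e)^2 e$, using $e^2=e$ so that $e(de)e = 0$), and since $e$ and all its derivatives are bounded, so is $R^{\mathrm{Gr}}$ and all of its covariant derivatives — the latter because covariant differentiation of $R^{\mathrm{Gr}}$ again only produces polynomial expressions in $e$ and finitely many of its derivatives, all in $W^{\infty,\infty}$, together with the Christoffel symbols of $M$, which are in $W^{\infty,\infty}$ by bounded geometry (\cite[Lemma~3.10]{GrosseSchneider}). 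Hence $E$ has totally bounded curvature, which completes the proof.
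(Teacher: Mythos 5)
Your proposal is correct and follows essentially the same route as the paper: synchronous trivializations on a uniformly locally finite Fermi/geodesic cover, the finitely-many-colors splitting into disjoint families, the isometric map built from $\phi_\gamma^{1/2}\xi_\gamma$ with $e=\Psi\Psi^*$, and comparison of the given connection with the Grassmann one via the uniformly bounded connection coefficients of the synchronous gauges. The only difference is that you also spell out the converse via $R^{\mathrm{Gr}}=e\,(de)\wedge(de)\,e$, which the paper leaves implicit; that computation is fine.
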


\begin{proof}
Let us consider for each open subset $U_\gamma$ as above the
synchronous trivialization $\xi_\gamma \colon E \vert_{U_\gamma} \to
\kappa_{\gamma}^{-1}(U_{\gamma}) \times \CC^t$ from
Equation~\eqref{eq.synchronous}, with $\CC^t$ the typical fiber above
$p_\gamma$. Then $\phi_\gamma^{1/2} \xi_\gamma$ extends to a vector
bundle map $E \to M \times \CC^t$ that is in $W^{\infty,\infty}$ since
$M$ has bounded geometry and the connection on $E$ has totally bounded
curvature. Let $N = N_{5r}$, with $N_{5r}$ as in Definition~\ref{FC}.
By the construction of the sets $U_\gamma$, we can divide the set of
all $\gamma$'s into $N+1$ disjoint subsets $\Gamma_k$, such that, for
each fixed $k$ and any $\gamma, \gamma' \in \Gamma_k$, the sets
$U_\gamma$ and $U_{\gamma'}$ are disjoint, by the construction of the
sets $U_\gamma$.  Let $\Psi_k \define \sum_{\gamma \in \Gamma_k}
\phi_\gamma^{1/2} \xi_\gamma$ and $\Psi \define (\Psi_1, \Psi_2,
\ldots, \Psi_{N+1}) \colon E \to M \times \CC^{t(N+1)}$ be the
resulting bundle morphism. Then $\Psi$ is isometric, it is in
$W^{\infty, \infty}$, and hence $e \define \Psi \Psi^{*}$ is the
desired projection.  The equivalence of the connections follows from
the uniform boundedness of the Christoffel symbols and their
derivatives associated to the synchronous trivialization \cite[Remark
  5.3]{GrosseSchneider}.
\end{proof}

This lemma may be used to reduce differential operators acting on
vector bundles to matrices of scalar differential operators.  It also
gives the following characterization of Sobolev spaces of sections of
a vector bundle.

\begin{proposition} \label{prop.projection}
We use the notation of the last lemma. Then
$$W^{k, p}(M; E) = e W^{k, p}(M)^N, \quad 1 \leq p \leq \infty.$$
\end{proposition}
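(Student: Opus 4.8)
The plan is to use the embedding $E \subset M \times \CC^N$ from Lemma~\ref{lemma.embedding}, under which the orthogonal projection $e \in M_N(W^{\infty,\infty}(M))$ and the connection $\nabla^E$ differs from the Grassmann connection $\nabla^{Gr} = e\, d\, e$ by a term in $W^{\infty,\infty}$. The key structural fact is that a section $u$ of $E$ is the same thing as a section $u$ of $M \times \CC^N$ satisfying $eu = u$; so set-theoretically the right-hand side $eW^{m,p}(M)^N$ is exactly the sections of the trivial bundle lying in $W^{m,p}$ that are fixed by $e$. The content of the proposition is therefore that these two norms — the intrinsic $W^{m,p}(M;E)$-norm and the norm $u \mapsto \|u\|_{W^{m,p}(M)^N}$ restricted to $eW^{m,p}(M)^N$ — are equivalent.

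First I would handle the inclusion $W^{m,p}(M;E) \subset eW^{m,p}(M)^N$. A section $u \in W^{m,p}(M;E)$ is, by Proposition~\ref{prop.alternative} (applied on the trivial bundle, or directly), characterized by $\nabla^E_{X_{k_1}}\cdots \nabla^E_{X_{k_j}} u \in L^p$ for $j \le m$. Writing $\nabla^E = \nabla^{Gr} + A$ with $A \in W^{\infty,\infty}(M; \End E \otimes T^*M)$, and expanding, the iterated $\nabla^{Gr}$-derivatives of $u$ differ from the iterated $\nabla^E$-derivatives by terms of the same or lower order with $W^{\infty,\infty}$ coefficients, hence lie in $L^p$ as well. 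But $\nabla^{Gr}_{X}u = e\, \partial_X(eu) = e\,\partial_X u$ (since $eu=u$), and $e \in M_N(W^{\infty,\infty})$, so each ordinary derivative $\partial_{X_{k_1}}\cdots \partial_{X_{k_j}}u$ differs from $e$ applied to it by $W^{\infty,\infty}$-bounded lower-order terms, and is therefore in $L^p(M)^N$. By the vector-field characterization of $W^{m,p}(M)^N$ (Proposition~\ref{prop.alternative} for the trivial bundle) this gives $u \in W^{m,p}(M)^N$, and $u = eu \in eW^{m,p}(M)^N$, with the norm estimate coming from tracking the $W^{\infty,\infty}$-bounds of $e$ and $A$ through the finitely many terms. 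The reverse inclusion is symmetric: given $u = ev$ with $v \in W^{m,p}(M)^N$, one has $u \in W^{m,p}(M)^N$ since $e$ is a bounded multiplier on each $W^{j,p}$, and then one runs the same expansion the other way, converting ordinary derivatives of $u$ into $\nabla^E$-derivatives up to $W^{\infty,\infty}$ lower-order corrections, to conclude $u \in W^{m,p}(M;E)$ with the matching bound.

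The main technical point — the ``hard part'', though it is really just bookkeeping — is the commutator/expansion estimate: that conjugating a string of $\nabla^E$'s by the embedding produces only finitely many correction terms, each of order $\le m$ with coefficients uniformly bounded in $W^{\infty,\infty}$, so that the equivalence constants are global (independent of any local chart). This is exactly where the totally-bounded-curvature hypothesis on $E$ and the bounded geometry of $M$ enter, via Lemma~\ref{lemma.embedding} and the uniform bounds on Christoffel symbols; once those are in hand, the argument is a finite induction on $m$ identical in spirit to the last paragraph of the proof of Proposition~\ref{prop.alternative}. One should also note the case $p=\infty$ is included with no change, and that the statement is really an identification of normed spaces up to equivalence of norms, which is all that is claimed.
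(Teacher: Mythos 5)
Your proposal is correct and is exactly the intended argument: the paper states Proposition \ref{prop.projection} without proof, treating it as an immediate consequence of Lemma \ref{lemma.embedding}, and your argument (identifying sections of $E$ with trivial-bundle sections fixed by $e$, then comparing iterated $\nabla^E$-, Grassmann-, and ordinary derivatives via the $W^{\infty,\infty}$ bounds on $e$ and on the difference of connections, using the vector-field characterization of Proposition \ref{prop.alternative}) supplies precisely the omitted details. No gaps.
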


We can now use this proposition to derive a description of Sobolev
spaces on manifolds with bounded geometry that is completely
independent of the use of connections.

\begin{remark}\label{rem.no.nabla}
The standard definition of the norm on Sobolev spaces is using powers
of $\nabla$ \cite{AGN1, GrosseSchneider}.  For instance $W^{k,
  \infty}(M) \define \{ u \, \vert \ \nabla^j u \in L^\infty(M), 0
\leq j \leq k \}$ (alternatively, it is the space of functions with
uniformly bounded derivatives of order $\le k$ in any normal geodesic
coordinate chart on $B_r^m$, for any fixed $r$ less than the
injectivity radius $\rinj(M)$ of $M$). We can define $W^{k, \infty}(M;
TM)$ similarly. Let then $X_1, X_2, \ldots, X_N$ as in Proposition
\ref{prop.alternative} and Lemma \ref{lemma.prop.exists}, $1 \leq p <
\infty$. Then
\begin{equation*}
 W^{\ell, p} (M) = \{\, u \, \vert \ X_{k_1} X_{k_2} \ldots X_{k_j} u
 \in L^p(M),\ j \leq \ell, \ 1 \leq k_1 \leq \ldots \leq k_j \leq N \,
 \}.
\end{equation*}
Together with Proposition \ref{prop.projection}, this gives a
description of Sobolev spaces without using connections.
\end{remark}

\subsection{Differential operators and partitions of unity}\label{sec:gd}
A \emph{differential operator on $E$} is an expression of the form $Pu
\ede \sum_{j=0}^k a_j \nabla^ju,$ with $a_j$ a section of $\End(E)
\otimes TM^{\otimes j}$. It can thus simply be regarded as a formal
collection of coefficients.  In particular, we do not identify the
differential operator with the maps that it induces (since it induces
many). A differential operator $Pu = \sum_{j=0}^k a_j \nabla^ju $ will
be said to \emph{have coefficients in} $W^{\ell, \infty}$ if $a_j \in
W^{\ell, \infty}(M; \End(E) \otimes TM^{\otimes j})$.  If $\ell = 0$,
we shall say that $P$ has \emph{bounded} coefficients.  If $\ell =
\infty$, we shall say that $P$ has \emph{totally bounded}
coefficients. The continuity of the contraction map
\begin{align*}
 W^{\ell, \infty}(M; \End(E) \otimes TM^{\otimes j}) \otimes W^{\ell,
   p}(M; T^*M^{\otimes j}\otimes E) \, \to \, W^{\ell, p}(M; E),
\end{align*}
gives that a differential operator $P = \sum_{j=0}^k a_j \nabla^j$
with coefficients in $W^{\ell, \infty}$ defines a \emph{continuous
  map}
\begin{align*}
 P \seq \sum_{j=0}^k a_j \nabla^j \colon W^{\ell+k, p}(M; E) \, \to \,
 W^{\ell, p}(M; E), \quad \ell \geq 0.
\end{align*}

\begin{lemma}\label{lemma.commutator}
Let $k\geq 0$ and let $P$ be an order $\ell$ differential operator
with coefficients $a_j\in W^{k+1, \infty}(M; \End(E)\otimes
TM^{\otimes j})$.  Let $\phi\in W^{k + \ell + 1, \infty}(M)$. Then the
commutator $[P, \phi]$ defines a continuous linear map $H^{k+\ell}(M;
E) \to H^{k+1}(M; E)$.  Moreover, if $\{\phi_\gamma\}_\gamma$ is a
bounded family in $W^{k + \ell + 1, \infty}(M)$, then operator norms
of $[P,\phi_\gamma]\colon H^{k+\ell}(M; E) \to H^{k+1}(M; E)$ are
bounded uniformly in $\gamma$.
\end{lemma}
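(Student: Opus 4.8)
The plan is to reduce the statement to the scalar/trivial-bundle case and then proceed by induction on $\ell$, the order being order of the operator $P$. First I would observe that the commutator $[P,\phi]$ is itself a differential operator of order $\ell-1$ whose coefficients are built from the coefficients $a_j$ of $P$ (which lie in $W^{k+1,\infty}$) and from the derivatives $\nabla^i\phi$ for $1 \le i \le \ell$ (which lie in $W^{k+1,\infty}$ since $\phi \in W^{k+\ell+1,\infty}$). Concretely, writing $P = \sum_{j=0}^{\ell} a_j \nabla^j$ and using the Leibniz rule for $\nabla^j(\phi u)$, one gets
\begin{equation*}
  [P,\phi]u \ = \ \sum_{j=1}^{\ell} a_j \sum_{i=1}^{j} \binom{j}{i} (\nabla^i \phi) \otimes \nabla^{j-i} u,
\end{equation*}
where the tensor contractions are the obvious ones. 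Thus $[P,\phi]$ is a differential operator of order $\le \ell-1$ whose coefficient of $\nabla^{j-i} u$ is (a contraction of) $a_j \otimes \nabla^i\phi$, a product of a $W^{k+1,\infty}$ factor and a $W^{k+1,\infty}$ factor, hence an element of $W^{k+1,\infty}$ by the continuity of the multiplication (contraction) map $W^{k+1,\infty} \otimes W^{k+1,\infty} \to W^{k+1,\infty}$ recorded just before the lemma.

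Granting this, the result follows immediately from the mapping property of differential operators stated in the paragraph preceding the lemma: a differential operator of order $\le \ell-1$ with coefficients in $W^{k+1,\infty}$ defines a continuous map $W^{(k+1)+(\ell-1),p}(M;E) \to W^{k+1,p}(M;E)$, i.e. $H^{k+\ell}(M;E) \to H^{k+1}(M;E)$ when $p=2$. For the uniform statement, I would note that each coefficient of $[P,\phi_\gamma]$ is a contraction of $a_j$ with $\nabla^i\phi_\gamma$; since $\{\phi_\gamma\}$ is bounded in $W^{k+\ell+1,\infty}$, the family $\{\nabla^i\phi_\gamma\}$ is bounded in $W^{k+1,\infty}$ for $1\le i\le \ell$, so the coefficients of $[P,\phi_\gamma]$ form a bounded family in $W^{k+1,\infty}$ with bound depending only on $\sup_\gamma\|\phi_\gamma\|_{W^{k+\ell+1,\infty}}$ and on $\max_j\|a_j\|_{W^{k+1,\infty}}$. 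The operator norm estimate in the mapping property depends only on these coefficient norms (it is really just the norm of the contraction map composed with $\nabla^{j-i}\colon H^{k+\ell}\to H^{k+\ell-(j-i)}$), hence the operator norms $\|[P,\phi_\gamma]\|_{H^{k+\ell}\to H^{k+1}}$ are bounded uniformly in $\gamma$.

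The only mild subtlety — and the step I would be most careful about — is verifying that the Leibniz expansion of $\nabla^j(\phi u)$ produces exactly terms of the form (bounded tensor)$\,\otimes\, (\nabla^i\phi)\otimes(\nabla^{j-i}u)$ with the right regularity bookkeeping: one needs $\nabla^i\phi \in W^{k+1,\infty}$, which uses precisely the hypothesis $\phi \in W^{k+\ell+1,\infty}$ with $i \le \ell$, and one needs the lowest-order term structure to land in $H^{k+1}$ rather than something worse, which is why the order drops from $\ell$ to $\ell-1$ and the target gains a derivative. (One may also, if preferred, verify this locally in synchronous trivializations along Fermi coordinates using Proposition~\ref{prop.part.unit}, where $\nabla$ becomes $\partial + (\text{totally bounded Christoffel terms})$ and the Leibniz rule is the classical one; the bounded-geometry bounds on the transition data then give the uniformity automatically. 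I would present the intrinsic computation as the main line and mention the coordinate version as an alternative.)
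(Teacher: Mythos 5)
Your proposal is correct and follows essentially the same route as the paper: both expand $[P,\phi]u=\sum_{j}a_j\sum_{s<j}\binom{j}{s}\nabla^{j-s}\phi\,\nabla^{s}u$ via the Leibniz rule and then conclude from the regularity of $a_j$ and of the derivatives of $\phi$. The only difference is organizational — you package the conclusion as ``$[P,\phi]$ is an order $\le\ell-1$ operator with coefficients in $W^{k+1,\infty}$'' and cite the general mapping property stated before the lemma, whereas the paper carries out the corresponding $H^{k+1}$-norm estimate directly; the regularity bookkeeping and the uniformity argument are identical.
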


\begin{proof}
We have $[P,\phi]u= \sum_{j=1}^{\ell} a_j \sum_{s=0}^{j-1}
\binom{j}{s} \nabla^{j-s}\phi \nabla^s u$.  Thus,
\begin{align*}
  \Vert [P,\phi]u\Vert_{H^{k+1}} \leq & \, C \sum_{r=0}^{k+1}
  \sum_{j=0}^\ell \Vert \nabla^r (a_j \sum_{s=0}^{j-1}
  \nabla^{j-s}\phi \nabla^s u)\Vert_{L^2}\\
\leq & \, C \sum_{r=0}^{k+1} \sum_{t=0}^{r}\sum_{j=0}^\ell \Vert
\nabla^{r-t} a_j \nabla^t(\sum_{s=0}^{j-1} \nabla^{j-s}\phi \nabla^s
u)\Vert_{L^2}
\end{align*}
and the claim follows by the regularity assumptions on $a_j$ and
$\phi$.
\end{proof}

\section{Variational boundary conditions and regularity}
\label{sec.variational}

We now introduce ``differential operators in divergence form'' from a
global point of view. The natural boundary value problem associated to
differential operators in divergence form will be called
\emph{variational boundary value problems.} In this subsection, we
introduce and take a first look at these variational boundary value
problems. We will see that, under some mild assumptions on our
differential operator $P$, any non-degenerate boundary value problem
is equivalent to a variational one. This allows to reduce the study of
the former to that of the latter, for which several general regularity
results will be obtained in the following sections. On the other hand,
the degenerate boundary value problems are known to behave in a
significantly different way than the non-degenerate ones (see, for
instance, \cite{NazarovPopoff} and the references therein).  The
possibility of reducing non-degenerate boundary value problems to
variational ones seems not to have been explored too much in the
literature.

We will continue to assume that $M$ is a smooth manifold with smooth
boundary. Moreover, we will assume that $TM$ has totally bounded
curvature, but \emph{we will not assume that $M$ has bounded
  geometry,} since we want to allow $M$ to be a domain in a Euclidean
space.  Recall that all differential operators are assumed to have
bounded coefficients and $E \to M$ has totally bounded
  curvature.

\subsection{Sesquilinear forms and operators in divergence form}
\label{subsub.bilin.forms}
It is important in applications to consider operators ``in divergence
form,'' which we will define below shortly. They provide a slightly
different class of differential operators than the operators with
coefficients in $L^\infty$ considered above and will be useful in
order to treat the Dirichlet and Robin problems on the same
footing. To introduce second order differential operators in
divergence form, we shall need the following data and assumptions:

\begin{assumption} \label{assume}
\normalfont{Let $M$ be a smooth manifold with smooth boundary, $E \to
  M$ a vector bundle, $a$ a sesquilinear form on $T^*M \otimes E$, and
  first order differential operators $Q$ and $Q_1$ satisfying
\begin{enumerate}
\item[\bf (A1)] $TM$ and $E$ have totally bounded curvature.

\item[\bf (A2)] $E\vert_{\pa M} = F_0 \oplus F_1$, with $F_0$ and
  $F_1$ with totally bounded curvature and $F_0$ is the orthogonal
  complement of $F_1$.
  
\item[\bf (A3)] $a = (a_x)_{x \in M}$ is a measurable, bounded family
  of hermitian sesquilinear forms
\begin{equation*}
  a = (a_x)_{x \in M} , \quad a_x\colon T^*_xM \otimes E_x \times
  T^*_xM \otimes E_x\to \CC.
\end{equation*}

\item[\bf (A4)] $Q_1$ has $L^\infty$ coefficients and acts on $(M,
  E)$.

\item[\bf (A5)] $Q$ has $W^{1, \infty}$ coefficients and acts on $(\pa
  M, F_1)$.

\item[\bf (A6)] $V$ is the closed subspace $H^1_0(M; E) \subset
  V\subset H^1(M; E)$ defined by $$V := \{ u \in H^1(M; E)\,
  \vert\ u\vert_{\pa M} \in \Gamma(\pa M; F_1)\}.$$
\end{enumerate}}
\end{assumption}

The family $(a_x)$ defines a section $a$ of the bundle $((T^*M\otimes
E) \otimes (T^{*}M\otimes \overline E))'$. In general, we say that a
section $a=(a_x)_{x\in M}$ is a \emph{bounded sesquilinear form on
  $T^*M \otimes E$} if it is an $L^\infty$-section of $((T^*M\otimes
E) \otimes (T^{*}M\otimes \overline E))'$.

\subsubsection{The Dirichlet (sesquilinear) form}
Using our assumptions \ref{assume}, we first define
\begin{align}\label{eq.def.Ba}
 B_{a}(u, v) \define \int_{M} a(\nabla u, \nabla v)\dvol_{g},
\end{align}
which is the \emph{Dirichlet form} associated to $a = (a_x)_{x \in
  M}$. Then the \emph{Dirichlet form} $B \colon V \times V \to \CC $
associated to $a$, $Q$, and $Q_1$ is
\begin{align}\label{eq.def.B}
  B(u, v) \define B_{a}(u, v) + (Q_1u, v)_{L^2(M; E)} + (Q u\vert_{\pa
    M}, v\vert_{\pa M})_{L^2(\pa M; F_1)},
\end{align}
where, initially, $u, v \in V \cap H^\infty(M; E)$, and then we then
extend $B$ to a sesquilinear linear form $B \colon V \times V \to \CC$
by continuity. In the future, we shall usually write $(Q u,
  v)_{L^2(\pa M)}$ instead of $(Q u\vert_{\pa M}, v\vert_{\pa
    M})_{L^2(\pa M; F_1)}$.

\subsubsection{The induced operator $\tilde P$}
The continuous, sesquilinear form $B \colon V \times V \to \CC$
defines a linear map (or operator)
\begin{align}\label{eq.def.tildeP}
\begin{gathered}
 \tilde P \colon V \, \longrightarrow \, V^* \\
 \< \tilde P (v), w \> \define B(v,w) \,, \quad v, w \in V\,.
\end{gathered}
\end{align}
If $B = B_{a}$, we shall denote by $\tilde P_{a}$ the associated
operator.  We note that $B$ and $\tilde P$ depend on the choice of the
metric $g$, although this will typically \emph{not} be shown in the
notation, since the metric will be fixed.

\begin{definition}\label{def.divergence}
We shall say that a differential operator $\tilde P \colon V \to V^*$
obtained as in Equation \eqref{eq.def.tildeP} is a \emph{second order
  differential operator in divergence form.}
\end{definition}

Recall that $H^{-1}(M; E^*) \simeq H^1_0(M; E)^*$. Using the metric on
$E$, we shall identify $H^{-1}(M; E^*) \simeq H^{-1}(M; E)$. Since
$H^1_0(M; E) \subset V$, we obtain the natural map $V^* \to H^1_0(M;
E)^* \simeq H^{-1}(M; E^*)$, and hence $\tilde P$ gives rise to a map
$P \colon V \to H^{-1}(M; E^*)$. Clearly, $P$ does not depend on $Q$,
whereas this is in general not the case for $\tilde P$. In fact, $Q$
only enters in the boundary conditions, see Example
\ref{ex.pa.nu}. Similarly, $\tilde P$ extends to a map $\tilde P :
H^1(M; E) \to H^1(M; E)^*$. We notice that $P$ (and hence also $\tilde
P$) determines the form $a$, which is the principal symbol of $P$. We
shall say that $a$ is the sesquilinear form associated to $P$. More
precisely, let us identify $((T^*M\otimes E) \otimes (T^{*}M\otimes
\overline E))' \simeq (T^*M \otimes T^{*}M)' \otimes \End(E)$ using
the metric on $E$. Then the quadratic function
\begin{equation}\label{eq.princ.symb}
   T^*M \ni \xi \to a(\xi, \xi) \in \End(E)
\end{equation}
is the principal symbol of $P$. In fact, it would be actually more
natural to start with $a \in TM \otimes TM \otimes \End(E)$. However,
as we \emph{always} consider a metric on $E$, this makes not difference
for us.

In a certain way, $P$ is the ``true'' differential operator associated
to $B$, whereas $\tilde P$ includes, in addition to $P$, also boundary
terms. To better understand this statement as well as the difference
between $P$ and $\tilde P$, a calculation in local coordinates is
contained in Example \ref{ex.pa.nu} below, see also
\cite{LionsMagenes1, Daners2000}.

\subsection{Variational boundary value problems}
We now examine the relation between the operators in divergence form
(i.e. of the form $\tilde P$) and boundary value problems. In
particular, we discuss the weak formulation of the Robin problem. See
also \cite{Agranovich07, KohrWendland18, LionsMagenes1, McLeanBook,
  OleinikBook92, Paltsev96} for the weak formulation of boundary value
problems. We assume that we are given a decomposition
\begin{equation}\label{eq.decomp}
  E\vert_{\pa M} = F_0 \oplus F_1 
\end{equation}
of the restriction of $E$ to the boundary into a direct sum of two
vector bundles \emph{with totally bounded curvature.} We consider
boundary differentials operators
\begin{equation}\label{eq.boundary.ops}
 C_j \colon H^\infty(M; E) \to L^2(\pa M; F_j),
\end{equation}
where $C_0$ is of order zero and $C_1 = C_{10} + C_{11}\pa_{\nu}$,
with $C_{11}$ and $C_{00}$ of order zero, $\pa_{\nu}$ the covariant
normal derivative at the boundary in the direction of the outer unit
normal vector $\nu$, and $C_{10}$ only including derivatives
tangential to $\pa M$. Each of the operators $C_0$, $C_{10}$, and
$C_{11}$ factors through a map $H^\infty(\pa M; E_{\vert \pa M}) \to
L^2(\pa M; F_j)$, which will be denoted with the same symbol and we
will assume to be differential operators with bounded coefficients. If
$C_0^{-1} \in L^\infty(\pa M; \End(F_0))$ and $C_{11}^{-1} \in
L^\infty(\pa M; \End(F_1))$ we shall say that the boundary conditions
$C = (C_0, C_1)$ are \emph{non-degenerate}.

We are interested in boundary value problems of the form
\eqref{eq.mixed0} where $h = (h_0, h_1)$, $h_j \in \Gamma(\pa M;
F_j)$, and the relation $Cu =h$ means $C_0 u = h_0$ and $C_1 u =
h_1$. We shall regard the boundary conditions $(C_0, C_1)$ and $(C_0',
C_1')$ as \emph{equivalent} if $(C_0', C_1') = (D_0 C_0, D_1 C_1)$,
where $D_j$, for each $j$, is a bounded automorphism of $F_j$ with
bounded inverse.  The two sets of solutions of two equivalent boundary
value problems are in a natural (continuous) bijection, which
justifies looking at equivalence classes of boundary value
problems. More precisely, let $C$ and $C'$ be equivalent boundary
value problems and $(D_0, D_1)$ be the automorphisms implementing the
equivalence.  Assume $C$ and $C'$ have totally bounded coefficients,
for simplicity, and let $h_0 \in H^{k+1/2}(\pa M; F_0)$, $h_1 \in
H^{k-1/2}(\pa M; F_1)$, $h \define (h_0, h_1)$ and $h':= (D_0 h_0, D_1
h_1)$.  Then the boundary value problems $Pu = f$, $Cu = h$ and $Pu =
f$, $C' u = h'$ have the same solutions $u \in H^{k+1}(M; E)$.

\subsubsection{Definition of variational boundary conditions}
Let $e_j$ be the orthogonal projection onto $F_j$ at the
boundary. Also, let $B$ be the basic sesquilinear form defined in
Equation \eqref{eq.def.B} of the previous subsection. Thus
consider \emph{for the rest of this paper}
\begin{equation}\label{eq.def.VnW}
  V \define \{ u \in H^1(M; E)\, | \ e_0u = 0 \mbox{ on } \pa M \}.
\end{equation} 
Spaces $V$ of this form with $e_0$ non-trivial (i.e. different from
the restriction to some components of the boundary) arises in the
study of the Hodge-Laplacian and in the study of the Ricci flow
\cite{Pulemotov}. Let $k\geq 1$. For any $h \in H^{k-1/2}(\pa M; F_1)$
and $f \in H^{k-1}(M; E)$, we let
\begin{equation*}
   F(v) \define \int_{M} (f, v) \dvol_g + \int_{\pa M} (h, v)\,
   d\vol_{\pa g}\ ,
\end{equation*} 
where $d\vol_{\pa g}$ is the induced volume form on $\pa M$. Then $F$
defines a linear functional on $\overline{V}$ and hence an element of
$V^*$. We denote $j_{k-1}(f, h) \define F$ the induced map
\begin{equation}\label{eq.def.jk}
  j_{k-1} \colon  H^{k -1}(M; E) \oplus H^{k
    - 1/2}(\pa M; F_1) \to V^*.
\end{equation}
The formula for $F$ makes sense also for $k = 0$ and $f=0$, in which
case it is just the dual map to the restriction at the boundary.
Recall that all our differential operators, including $\tilde P$, have
bounded coefficients.

\begin{remark}\label{rem.EndE}
We identify $(T^*M \otimes E \otimes T^*M \otimes E)'$ with ${(T^*M
  \otimes T^*M)'} \otimes \End(E)$ using the metrics on $TM$ and $E$,
respectively, as above. Then $a(dr, dr)$ can be regarded as a section
of $\End(E)\vert_{\pa M}$ and we have on $V$ that $a(dr, \nabla .) =
a(dr, dr) \pa_{\nu} + Q'$, where $Q'$ is a differential operator on
$\Gamma(\pa M; E|_{\pa M})$ (that is, it does not involve normal
derivatives at the boundary; it involves only tangential derivatives).
\end{remark}

Note that we are not assuming the bundles $F_i$ to be orthogonal.

\begin{remark} \label{rem.Q20}
Let us assume, in the definition of $B(u,v)$, Equation
\eqref{eq.def.B}, that $u \in H^2(M; E) \cap V$. Then we obtain
\begin{align}\label{eq.def.paP}
  B(u, v) \define (Pu, v) + \int_{\pa M} (\pa_{\nu}^{P} u ,v)
  d\vol_{\pa g},
\end{align}
where $d\vol_{\pa g}$ is the volume form on $\pa M$, as before. We
have $\pa_{\nu}^{P}u = e_1 a(dr, dr\otimes \nabla_\nu u) + Q$, a first
order differential operator. We shall also denote $\pa_{\nu}^{a}u
\define a(dr, dr\otimes \nabla_\nu u)$, with $u$ a section of $F_1$
over $\pa M$.  For reasons of symmetry (to have a class of operators
stable under adjoints), one may want to consider in the minimal
regularity case the following bilinear form
\begin{align*}
  B(u, v) \define B_{a}(u, v) + (Q_1u, v)_{L^2(M)} + (u, Q_2
  v)_{L^2(M)} + (Q u, v)_{L^2(\pa M)}.
\end{align*}
In that case, the boundary operator $\pa_{\nu}^{P}u$ will depend also
on $Q_2$. However, if our operators have coefficients in $W^{1,
  \infty}$, which is the case when dealing with regularity estimates,
as in this paper, then we can absorb $Q_2$ into $Q_1$ by taking
adjoints, with the price of obtaining an additional boundary term,
which, nevertheless, can then be absorbed into $Q$. See also Example
\ref{ex.pa.nu}, where the term $Q_2$ was kept in the formula.
\end{remark}

\begin{lemma}\label{lemma.e0e1}
Assume $u \in H^2(M; E)$ and let $\tilde P$ be as in Equation
\eqref{eq.def.tildeP}.  The equation
\begin{equation}\label{eq.tildeP}
  \tilde P (u) = j_{k-1}(f, h)
\end{equation}
is then equivalent to the mixed Dirichlet/Robin boundary value problem
\begin{align}\label{eq.mixed.weak}
  \left\{ \begin{aligned} P u &= f && \text{in } M\\
    e_0 u &= 0 && \text{on } \pa M\\
    e_1 \pa_{\nu}^{a} u + Q u & = h && \text{on } \pa M,
  \end{aligned} \right. 
\end{align}
where $\pa_{\nu}^{a}u \define a(dr, \nabla u)$ as above.
\end{lemma}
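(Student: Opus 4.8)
The statement is essentially an integration-by-parts / Green's formula argument, so the plan is to start from the defining identity $\langle \tilde P(u), v\rangle = B(u,v)$ for $v \in V$, rewrite $B(u,v)$ using Remark~\ref{rem.Q20} under the hypothesis $u \in H^2(M;E)$, and then match the two sides against the decomposition of $j_{k-1}(f,h)$. First I would recall that, since $u \in H^2(M;E) \cap V$, Remark~\ref{rem.Q20} gives
\[
  B(u,v) = (Pu, v)_{L^2(M)} + \int_{\pa M} (\pa_{\nu}^P u, v)\, d\vol_{\pa g},
  \qquad \pa_{\nu}^P u = e_1 a(dr, \nabla_\nu u) + Qu = e_1 \pa_{\nu}^a u + Qu,
\]
valid for all $v \in V$. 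On the other hand, by the definition \eqref{eq.def.jk} of $j_{k-1}$, we have $\langle j_{k-1}(f,h), v\rangle = (f,v)_{L^2(M)} + (h, v)_{L^2(\pa M; F_1)}$ for all $v \in V$. So \eqref{eq.tildeP} holds iff these two functionals on $V$ agree.

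The next step is to extract the two boundary conditions by a density/localization argument. Testing first against $v \in H^1_0(M;E) \subset V$ (for which the boundary integrals vanish) forces $Pu = f$ in $H^{-1}(M;E^*)$, hence in $L^2$ since $u \in H^2$; this is the first equation. Subtracting, we are left with $\int_{\pa M}(e_1\pa_\nu^a u + Qu, v)\,d\vol_{\pa g} = \int_{\pa M}(h,v)\,d\vol_{\pa g}$ for all $v \in V$. Since $v$ ranges over a space whose boundary traces $v|_{\pa M}$ fill out (by the trace theorem, Theorem~\ref{thm.trace}, and the surjectivity onto $H^{s-1/2}$) a dense subspace of $L^2(\pa M; F_1)$ — namely all traces with $e_0 v = 0$, i.e.\ arbitrary $F_1$-components — we conclude $e_1 \pa_\nu^a u + Qu = h$ on $\pa M$. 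Finally, the condition $e_0 u = 0$ on $\pa M$ is built into membership $u \in V$ by the definition \eqref{eq.def.VnW} of $V$, so it comes for free. Conversely, if $u$ solves \eqref{eq.mixed.weak}, then reversing these steps (plugging back into the Green formula of Remark~\ref{rem.Q20}) shows $\langle \tilde P u, v\rangle = \langle j_{k-1}(f,h), v\rangle$ for all $v \in V$, i.e.\ \eqref{eq.tildeP}.

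The main obstacle, and the only place requiring genuine care, is the justification that the boundary pairing can be tested against a set of $v$'s rich enough to deduce the pointwise (a.e.) boundary identity $e_1\pa_\nu^a u + Qu = h$: one needs that $\{v|_{\pa M} : v \in V\}$ is dense in $L^2(\pa M; F_1)$ (with $F_0$-component zero), which follows from the surjectivity part of the trace theorem together with the fact that $\Gamma_c(\pa M; F_1)$ is dense in $L^2(\pa M; F_1)$ and such sections lift to elements of $H^1(M;E)$ with vanishing $F_0$-trace. One must also check the integrability of the boundary term $(\pa_\nu^P u, v)$ on $\pa M$: since $u \in H^2(M;E)$ its normal derivative has an $H^{1/2}$ trace, $Q$ has $W^{1,\infty}$ coefficients of order one acting on that trace giving something in $H^{-1/2}$, and $v|_{\pa M} \in H^{1/2}$, so the pairing is well defined — this is exactly the content already packaged in Remark~\ref{rem.Q20} and \eqref{eq.def.paP}, so I would simply invoke it rather than redo it. Everything else is a routine bookkeeping of the identification $H^{-1}(M;E^*) \simeq H^1_0(M;E)^*$ and the definitions.
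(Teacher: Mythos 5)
Your proof is correct and follows essentially the same route as the paper's: both rest on the Green's-formula identity of Remark~\ref{rem.Q20} to write $\langle \tilde P(u),v\rangle - \langle j_{k-1}(f,h),v\rangle$ as an interior term plus a boundary term, with $e_0u=0$ coming for free from $u\in V$. You merely make explicit two steps the paper leaves implicit (testing against $v\in H^1_0(M;E)$ to isolate $Pu=f$, and the density of the traces $\{v|_{\pa M}:v\in V\}$ in $L^2(\pa M;F_1)$ to extract the Robin condition), which is a welcome but not substantively different elaboration.
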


\begin{proof}
Indeed, $e_0u = 0$ on $\pa M$ since $u$ is in $V$, which is, by
definition the domain of $\tilde P$. Let $F \define j_{k-1}(f,
h)$. The rest follows from Equation~\eqref{eq.def.B}, which gives
\begin{align*}
  \< \tilde P(u), w \> - F(w) = & \ B(u, w) - \int_{M} (f, w)\dvol_g -
  \int_{\pa M} ( h, w)\, d\vol_{\pa g}\\
  = & \ (Pu - f, w)_M - \int_{\pa M}
  (\pa_{\nu}^{a} u + Q u - h, w)\, d\vol_{\pa g}. \qedhere
\end{align*}
\end{proof}

The boundary conditions of this lemma will be the main object of study
for us. As we will see below in Proposition \ref{prop.equiv.bc}, these
boundary conditions turn out to be, in fact, quite general. Recall the
following standard terminology.

\begin{definition}\label{def.bc}
We keep the notation of the Lemma \ref{lemma.e0e1}. We shall say, as
usual, that $e_0 u = h_0$ are the \emph{Dirichlet} boundary conditions
and $e_1 \pa_{\nu}^{a} u + Q u = h_1$ are the \emph{natural} (or
\emph{Robin}) boundary conditions.  Also, we shall say that $P$ and
the boundary conditions $(e_0, e_1 \pa_{\nu}^{a} + Q)$ of Equation
\eqref{eq.mixed.weak} are obtained from \emph{a variational
  formulation.} We shall also say that $(e_0, e_1 \pa_{\nu}^{a} + Q)$
are \emph{variational boundary conditions associated to $\tilde P$.}
\end{definition}

Non-degenerate boundary conditions are equivalent to variational ones
for suitable $a$, as we will see below.

\begin{proposition}\label{prop.equiv.bc}
Let $\tilde P$ be a second order differential operator in divergence
form associated to $a$, regarded as a bounded, measurable section of
$T^*M \otimes T^*M \otimes \End(E)$, such that $e_1 a(dr , dr)e_1$ is
invertible in $L^\infty(\pa M; \End(F_1))$. Let $C = (C_0, C_1)$ be
\emph{non-degenerate} boundary conditions. Then there is a first order
differential operator $Q$ with bounded coefficients such that the
boundary conditions $C$ and $(e_0, e_1 \pa_{\nu}^{a} + Q)$ are
equivalent. In particular, $C$ is equivalent to some variational
boundary conditions associated to $\tilde P$.
\end{proposition}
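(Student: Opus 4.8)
The plan is to reduce the problem to identifying $Q$ so that the two boundary operator pairs are related by bounded, boundedly-invertible automorphisms $(D_0, D_1)$ of $F_0$ and $F_1$. First I would treat the Dirichlet component: both $C_0$ and $e_0$ are order zero maps $H^\infty(M;E) \to L^2(\pa M; F_0)$ factoring through $E\vert_{\pa M}$, and $e_0$ is literally the orthogonal projection onto $F_0$. By non-degeneracy, $C_0$ has bounded inverse in $L^\infty(\pa M; \End(F_0))$, so $C_0 = C_0 e_0$ on the relevant domain and $D_0 \define C_0$ (viewed as an automorphism of $F_0$) is bounded with bounded inverse, giving $C_0 = D_0 e_0$. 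The slightly delicate point is that $C_0$ a priori acts on all of $E\vert_{\pa M}$; but since we only ever evaluate it on sections satisfying the Dirichlet condition, and its behaviour there is governed by its restriction to $F_0$, this causes no trouble — one observes $C_0 u = h_0$ iff $e_0 u = D_0^{-1} h_0$.

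Next I would handle the Robin component, which carries the real content. Write $C_1 = C_{10} + C_{11}\pa_\nu$ with $C_{11}$ boundedly invertible in $L^\infty(\pa M;\End(F_1))$ (non-degeneracy again), $C_{10}$ purely tangential, both of bounded coefficients. On the other side, by Remark~\ref{rem.EndE} we have on $V$ that $a(dr,\nabla\,\cdot\,) = a(dr,dr)\pa_\nu + Q'$ with $Q'$ tangential, hence $e_1\pa_\nu^a = e_1 a(dr,dr)\pa_\nu + e_1 Q'$; by hypothesis $e_1 a(dr,dr) e_1$ is invertible in $L^\infty(\pa M;\End(F_1))$. The idea is to set $D_1 \define C_{11}\big(e_1 a(dr,dr)e_1\big)^{-1}$, which is a bounded automorphism of $F_1$ with bounded inverse, so that $D_1 \big(e_1 a(dr,dr)e_1\big)\pa_\nu = C_{11}\pa_\nu$ matches the top-order part. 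Then I would \emph{define} $Q$ by the requirement that $D_1\big(e_1\pa_\nu^a + Q\big)$ equal $C_1$ modulo the Dirichlet constraint; explicitly $Q \define D_1^{-1}C_{10} - e_1 Q'$ modulo terms supported where $e_0 u = 0$, using that on $V$ one may freely add operators annihilating the $F_0$-part of $u\vert_{\pa M}$. Since $C_{10}$, $D_1^{\pm 1}$, and $Q'$ all have bounded coefficients, $Q$ is a first order differential operator with bounded coefficients acting on sections of $F_1\to\pa M$, as required. One then checks $C_1 u = h_1 \iff e_1\pa_\nu^a u + Q u = D_1^{-1} h_1$ for $u \in V$.

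Finally I would assemble: with $(D_0, D_1)$ as above, $(C_0, C_1)$ and $(e_0, e_1\pa_\nu^a + Q)$ differ by the automorphism pair $(D_0, D_1)$, hence are equivalent in the sense defined before Lemma~\ref{lemma.e0e1}; the last sentence of the proposition is then immediate from Definition~\ref{def.bc}. The main obstacle I anticipate is purely bookkeeping rather than conceptual: making precise the phrase ``modulo operators that vanish on $V$'' so that $Q$ genuinely acts on $F_1$ alone and inherits bounded coefficients — i.e. checking that the ambiguity in $\pa_\nu^a$ coming from the splitting $E\vert_{\pa M} = F_0\oplus F_1$ can be absorbed without disturbing boundedness, and that no normal derivatives sneak into $Q$ beyond the allowed first order tangential terms. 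This is where Remark~\ref{rem.EndE} and the structural hypothesis on $C_1$ do the work, and I would spell out that step carefully while leaving the norm estimates to the reader.
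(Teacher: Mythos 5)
Your proposal is correct and follows essentially the same route as the paper: write $C_1 = C_{10} + C_{11}\pa_{\nu}$, use the invertibility of $C_{11}$ and of $e_1 a(dr,dr)e_1$ to match the normal-derivative terms via the automorphism $D_1 = C_{11}\bigl(e_1 a(dr,dr)e_1\bigr)^{-1}$, and then absorb the remaining tangential discrepancy into the free parameter $Q$ (the paper phrases this as arranging $Q'=Q''$, you phrase it as $Q \define D_1^{-1}C_{10} - e_1 Q'$, which is the same computation). Your extra care about $C_0$ acting on all of $E\vert_{\pa M}$ and about working modulo the Dirichlet constraint on $V$ just makes explicit what the paper's identity $C_0 e_0 = C_0$ leaves implicit.
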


This proposition justifies our choice to consider only boundary value
problems of the form \eqref{eq.mixed.weak} instead of the general form
\eqref{eq.mixed0}. Indeed, let $\tilde P$ be the differential operator
in divergence form with boundary conditions $(e_0, e_1 \pa_{\nu}^{a} +
Q)$. Then the solutions of the equation $\tilde P (u) = F$ are in a
natural bijections to the solutions of the boundary value problem $Pu
= f$ and $Cu = h$, where $f$ and $h$ depend linearly and continuously
on $F$.

\begin{proof}
Let $C_1 = C_{10} + C_{11}\pa_{\nu}$ as explained after Equation
\eqref{eq.boundary.ops}. We have $C_0 e_0 = C_0$ with $C_0$
invertible. Then, $e_1 \pa_{\nu}^{a} + Q = e_1 a(dr, dr)e_1 \pa_{\nu} +
Q'$ and $e_1 a(dr, dr)e_1 C_{11}^{-1} C_1 = e_1 a(dr, dr)e_1
\pa_{\nu} + Q''$, where $Q'$ and $Q''$ are first order differential
operators on $F_1$ with bounded coefficients. Since $Q$ (and hence
also $Q'$) can be chosen arbitrarily with these properties, we can
certainly arrange that $Q' = Q''$ within the class of operators
considered.
\end{proof}

There is no good reason to chose $C_0$ other than $e_0$. On the other
hand, there is no reason to expect, in general, that $C_{11}$ be
invertible. However, if $C_{11}$ is \emph{not} invertible, the
behavior of the problem becomes completely different and, to the best
of our knowledge, it is not fully understood at this time (see,
however, \cite{NazarovPopoff} and the references therein).

\begin{example}\label{ex.pa.nu}
Recall the operator $\tilde P$ from Equation \eqref{eq.def.tildeP} and
the related form $B$ from Equation \eqref{eq.def.B}. Let us assume
that $M = U \subset \RR^m$ is a submanifold with boundary of dimension
$m$. (Here $\pa U$ denotes the boundary of $U$ as a manifold with
boundary, not as a subset of $\RR^m$!) Let $E = \underline{\CC}^N$,
$F_0 = \underline{\CC}^{N_0}$, and $F_1 = \underline{\CC}^{N_1}$ be
trivial bundles with $N = N_0 + N_1$---all equipped with the standard
metric. Finally, we assume that are matrix valued functions $a_{ij},
b_i, b^*_j, c, d \in L^\infty(U; M_{N})$.  We will assume that the
metric on $U$ is the euclidean metric, since this will not really
decrease the generality, but will simplify our notation.  In this
example, we choose
\begin{equation}
   V \define \{ u \in H^1(U)^N\, | \ u_1= u_2= \ldots = u_{N_1} = 0
   \mbox{ on } \pa U \} .
\end{equation}

We let $a$ denote the bilinear form on $\underline{\CC}^{mN}$
associated to the matrix $(a_{ij}\in (\underline{\CC}^{N})^*\otimes
\underline{\CC}^{N})_{ij}$. We have $\nabla_i = \pa_i$ since we are
dealing with the trivial bundles with standard metric over the
euclidean space. Let $Q = (Q^{kl})_{k, l = N_1+1}^{N}$ be a matrix
first order differential operator on $\pa U$ and $(Pu)_k$ be the $k$th
component of $Pu$. This gives for all $w\in V$
\begin{multline*}
  \< \tilde P_{(a,Q)} u, w \> \seq \int_U a(\nabla u, \nabla w)dx +
  \int_{\pa U} (Q u, w) dS \\
  = \sum_{i, j = 1}^m \sum_{k, l = 1}^{N} \int_{U} a_{ij}^{kl} \pa_j
  u_{l} \pa_i \overline{w_k} dx + \int_{\pa U} (Q u, w) dS,
\end{multline*}
where $dS$ is the induced volume form on $\pa U$. In particular,
\begin{equation*}
  (P_{(a,Q)} u)_k = (P_{a} u)_k \seq - \sum_{i, j = 1}^m \sum_{l =
    1}^{N} \pa_i(a_{ij}^{kl} \pa_j u_{l}).
\end{equation*}
Let $Q_1u\define \sum_{j=1}^m b_j \pa_j u + cu$ and $Q_2u\define
\sum_{i=1}^m \overline{b_i^*} \pa_i u$, let $u, v \in
\CI(\overline{U})^N \cap V$, and let $\nu$ be the \emph{outer, unit}
normal to $\pa U$. The formula $\int_U\, (\pa_j u)\, dx = \int_{\pa
  U}\, \nu_j u\, dS$ gives
\begin{multline*}
   \langle \tilde{P}(u), w\rangle \define \int_U a(\nabla u, \nabla w)
   dx + (Q_1u, w)_U + (u, Q_2 w)_U\\
 + \sum_{k,l = N_1 + 1}^N\int_{\pa U} Q^{kl} u_k(x) \overline{w_l(x)}
 dS
%
 = \int_U Pu(x)\overline{w(x)} dx + \int_{\pa U} \pa_{\nu}^{P}u(x)
 \overline{w(x)} dS.
\end{multline*}
Let us extend $Q$ to an $N \times N$-matrix by completing it with
zeroes. In turn, this gives that
\begin{equation*}
   P u = P_a u + \sum_{j=1}^m b_j \pa_j u + cu - \sum_{i=1}^m \pa_i(
   b_i^* u)
\end{equation*} 
and 
\begin{equation*}
  (\pa_{\nu}^{P} u)_k \define
  \begin{cases}
      \sum_{l=N_1+1}^N \Big (\sum_{ij=1}^m \nu_i a_{ij}^{kl} \pa_j
      u_{l} + \sum_{i} \nu_i b_i^{*kl} u_l \Big ) + (Q u)_k & \mbox{
        if } k > N_1\\ 0 & \mbox{ otherwise.}
  \end{cases}
\end{equation*}
 
If the coefficients $\overline{b_i^*}$ are differentiable \emph{and}
$F_1 = 0$, we can get rid of the operator $Q_2$ by absorbing it into
$Q_1$.  However, if these coefficients are not differentiable and we
want a class of operators that is closed under adjoints, then we need
to include the $Q_2$ term into the definition of $P$ (or, rather,
$\tilde P$). Moreover, if $F_1 \neq 0$, we see that $\tilde P u$
contains an additional term compared to $P u$, meaning that $\tilde P
u - Pu$ is a distribution supported on $\pa U$. Also, we see that
$\tilde P$ determines $P$, but not the other way around.
\end{example}

\subsubsection{Uniformly strongly elliptic operators}
There are some classes of operators for which the conditions of
Proposition \ref{prop.equiv.bc} are almost automatically
satisfied. Recall the following standard terminology.

\begin{definition}\label{def.u.s.e}
We shall say that the operator $\tilde P \colon V \to V^*$ defined
using the sesquilinear form $B$, see Equation \eqref{eq.def.tildeP},
is \emph{uniformly strongly elliptic} if there exists $c_a > 0$ such
that
\begin{equation*}
	\Re  \big ( a( \eta \otimes \xi , \eta \otimes \xi ) \big ) 
	\ge c_a \|\eta\|^2  \|\xi\|^2
\end{equation*}
for all $x$ and all $\xi\in E_x$ and $\eta \in T_x^* M$.  The
associated operator $P$ will be called \emph{uniformly strongly
  elliptic.}  A family of operators is called \emph{uniformly strongly
  elliptic} if each operator is uniformly strongly elliptic and we can
choose the same constant $c_a$ for all operators in the family. We
shall say that $\tilde P$ is \emph{uniformly elliptic} if there exists
$c_e > 0$ such that, for every $\xi \in E_x$, there exists $\xi_1 \in
E_x$, $\|\xi_1\| = \|\xi\|$, such that
\begin{equation*}
	\big | a( \eta \otimes \xi , \eta \otimes \xi_1 ) \big | \ge
        c_e \|\eta\|^2 \|\xi\|^2
\end{equation*}
for all $x$ and $\eta \in T_x^* M$.
\end{definition}

Note that in the above definition, we have taken advantage of the fact
that our operator acts on sections of the same bundle. For operators
acting between sections of different bundles, the definition will
change in an obvious way (replacing the injectivity of the principal
symbol with its invertibility).  We see that if $P$ is uniformly
strongly elliptic, then $e_1 a(\nu, \nu)e_1$ is invertible in
$L^\infty(\pa M; \End(F_1))$. (If $e_1 = 1$, i.e. if $F = E\vert_{\pa
  M}$, it is enough to assume that $a$ is uniformly elliptic. We agree
that if $F_1$ is the zero bundle on some component of $\pa M$, then we
consider every endomorphism of it to be invertible on that set.)

\subsubsection{The scale of regularity for boundary value problems}
Let $P$ be associated to $B$ as in Equation \eqref{eq.def.tildeP} and
$V$ be as in Equation \eqref{eq.def.VnW}.  We define
\begin{equation*}
 \check H^{\ell-1}(M; E) \define \left\{
 \begin{aligned}
   & H^{\ell-1}(M; E) \oplus H^{\ell-1/2}(\pa M; F_1) && \text{for }
   \ell \geq 1\\
   & V^{*} && \text{for } \ell = 0.
 \end{aligned}\right.
\end{equation*}
The natural exact sequence $0 \to H^{-1/2}(\pa M; F_1) \to V^* \to
H^{-1}(M; E) \to 0$, where the second map is induced by the trace map,
see Theorem~\ref{thm.trace}, shows that we have a natural scale of
regularity spaces.  In particular, $\check H^{\ell+1}(M; E) \subset
\check H^{\ell}(M; E)$, $\ell \geq0$. In general, the natural
inclusion $\check H^{\ell}(M; E) \to \check H^{-1}(M; E) \define
V^{*}$ is given by the operators $j_\ell$ defined in Equation
\eqref{eq.def.jk}. Let $\tilde P_\ell \colon H^{\ell+1}(M; E) \cap V
\to \check H^{\ell-1}(M; E)$ be given by
\begin{equation}\label{eq.def.Pm}
 \tilde P_\ell (u) \define ( Pu,\ e_1\pa_{\nu}^{a} u + Q u) \ \mbox{
   for } \ell \geq 1.
\end{equation}
Then the relation between $\tilde P_\ell$ and $\tilde{P}$ is by
\eqref{eq.tildeP} expressed in the commutativity of the diagram
\begin{equation}\label{eq.diagram}
\begin{CD}
  H^{\ell+1}(M; E) \cap V @>{\tilde P_\ell}>> \check H^{\ell-1}(M; E) \\
  @VVV @VV{j_{\ell-1}}V \\
  H^{1}(M; E) \cap V @>{\tilde P}>> \check H^{-1}(M; E)
\end{CD}
\end{equation}
where the vertical arrows are the natural inclusions.  Thus, although
the definition of $\tilde P_0 \define \tilde P \colon H^{1}(M; E) \cap
V \to \check H^{-1}(M; E)$ is different from that of $\tilde P_\ell$
for $\ell >0$, it fits into a scale of regularity spaces.

\subsection{Regularity conditions}
The scale of regularity spaces provides a good setting to study
regularity conditions

\begin{notation}\normalfont{
We shall denote by $\maD^{\ell,j}(M;E)$ the set of pairs $(D,C)$,
where $D$ is a second order differential operator defined on sections
of $E\to M$ and $C$ is an order $j$ boundary condition, with both $D$
and $C$ assumed to have coefficients in $W^{\ell,\infty}$. In case $M$
has no boundary (and thus there are no boundary conditions), we shall
denote the resulting space $\maD^{\ell,\emptyset}(M;E)$. If $E \ede
\underline{\CC}$ (that is, if we are dealing with \emph{scalar}
boundary value problems), then we shall drop the vector bundle from
the notation.}
\end{notation}

The assumption that $C$ is an order $j$ boundary condition implies
that $E\vert_{\pa M} = F_j$, and hence that $F_{1-j} = 0$. The general
case just involves a more complicated notation. We endow the space
$\maD^{\ell,j}(M;E)$ with the norm defined by the maximum of the
$W^{\ell,\infty}$-norms of the coefficients.  Recall the following
definition:

\begin{definition}\label{def.loc.reg}
We say that $(D, C) \in \maD^{\ell,j}(M;E)$, $\ell\geq j + 1$,
satisfies an \emph{$H^{\ell+1}$-regularity estimate} on $M$ if there
exists $c_R > 0$ with the following property: For any $w \in H^{k}(M;
E)$, $\ell\geq k\geq j + 1$, with compact support in $M$ such that $D
w \in H^{k-1}(M; E)$ and $C w\in H^{k-j+1/2}(\pa M; E)$, we have $w
\in H^{k+1}(M; E)$ and
\begin{equation*}
  \| w \|_{H^{k+1}(M; E)}\ \leq\ c_R \left(\|D w\|_{H^{k-1}(M; E)} +
  \| w \|_{H^{k}(M; E)} + \|C w\|_{H^{k-j+1/2}(\pa M; E)}\right).
\end{equation*} 
If $D \in \maD^{\ell,\emptyset}(M;E)$, we just drop the term $\|C
w\|_{H^{k-j+1/2}(\pa M; E)}$. If $j = 1$ and if $(D, C)$ are obtained
from a variational formulation, then we allow also $\ell = 1$, with
the relations $D w \in L^2(M; E)$ and $C w\in H^{1/2}(\pa M; E)$ being
replaced with $\tilde D w = j_0(f, h)$, with $(f, h) \in L^2(M; E)
\oplus H^{1/2}(\pa M; E)$ and the estimate is replaced with
\begin{equation*}
  \| w \|_{H^{2}(M; E)}\ \leq\ c_R \left(\|f\|_{L^2(M; E)} +
  \| w \|_{H^{1}(M; E)} + \|h\|_{H^{1/2}(\pa M; E)}\right),
\end{equation*}
in which case we also say that $\tilde D$ satisfies an
\emph{$H^{\ell+1}$-regularity estimate} on $M$.
\end{definition}

The following comments on this definition are in order.

\begin{remark}\label{rem.reg}
\begin{enumerate}[(i)]
\item The condition that $w$ have compact support in $M$ \emph{does
  not} imply that it vanishes near the boundary $\pa M$.

\item Typically, we shall consider boundary value problems coming from
  a variational formulation, but, at this time, it is not necessary to
  make this assumption.

\item The conditions that $D$ and $C$ have coefficients in $W^{\ell,
  \infty}$ are needed so that $D w \in H^{k-1}(M; E)$ and $C w \in
  H^{k-j+1/2}(\pa M; E)$ for $w \in H^{k+1}(M; E)$, $k \leq \ell$.

\item Let $(D', C') \in \maD^{\ell,j}(M;E)$ be a second boundary value
  problem such that $D - D'$ has order $< 2$ and $C - C'$ has order $<
  j$. Then $(D, C)$ satisfies an $H^{\ell+1}$-regularity estimate on
  $M$ if, and only if, $(D', C')$ does so.
\end{enumerate}
\end{remark}

\begin{remark}\label{rem.higher.order}
The careful reader might have noticed already that, apart from
notation, we have not really used the fact that our operator is second
order, except maybe in Proposition \ref{prop.equiv.bc}. For instance,
for an order $2m$ problem, one would consider
\begin{equation*}
  a \in L^\infty(M; T^{\otimes m}M \otimes T^{\otimes m}M
  \otimes \End(E)),
\end{equation*}
but the form $B$ would involve several boundary terms relating the
normal derivatives at the boundary. For $V$ we would take an
intermediate subspace $H_0^{m}(M; E) \subset V \subset H^m(M;E)$. This
could be given by a sub-bundle $F_1 \subset E_{\pa M}^{m}$
representing the possible values of $(u, \pa_\nu u, \ldots,
\pa_\nu^{m-1}u)$ at the boundary:
\begin{equation*}
  V := \{ u \in H^{m}(M; E) \, \vert \ (u, \pa_\nu u, \ldots,
  \pa_\nu^{m-1}u) \in L^{\infty}(\pa M; F_1) \}.
\end{equation*}
Nevertheless, non-local conditions are also important
\cite{LionsMagenes2}. One could then proceed to deal with higher order
problems as in \cite{LionsMagenes1}, for instance. Since Proposition
\ref{prop.equiv.bc} plays an important role in justifying our choice
of \emph{essentially always} using variational boundary value
conditions and in view of the many complications that arise when
dealing with higher order problems (see \cite{ADN1, LionsMagenes1,
  Pryde, SchectherGBVP}), we decided to restrict ourselves to the case
of second order operators. See also Remark~\ref{rem.in.general}.
\end{remark}

\section{Uniform regularity estimates for families}\label{sec.u.est}

We shall consider the same framework as in the previous section, in
particular, $(M,g)$ will continue to be a Riemannian manifold with
smooth boundary $\pa M$ such that $TM$ has totally bounded curvature.
Also, we continue to assume that $E \to M$ has totally bounded
curvature.

\subsection{Compact families of boundary value problems}
We use the same notation as in the previous section. In particular,
$B$, $\tilde P$, $P$, $a$, $e_1 \pa_{\nu}^{a} + Q$, $C = (C_0, C_1)$,
and $F_0 \oplus F_1 = E\vert_{\pa M}$ are as in the previous
section. In particular, $\tilde P$ will always represent a second
order differential operator in divergence form associated to the
sesquilinear form $a$; in particular, (the quadratic map associated
to) $a$ is the principal symbol of $P$.
  
We shall need a \emph{uniform} version of Definition
\ref{def.loc.reg}.

\begin{definition}\label{def.unif.reg}
Let $S\subset\maD^{\ell,j}(M;E)$ and $N \subset M$ a submanifold with
boundary $\pa N = \pa M \cap N$ of $M$. We shall say that $S$
satisfies a \emph{uniform $H^{\ell+1}$-regularity estimate on $N
  \subset M$} if each $(D, C)\in S$ satisfies an
$H^{\ell+1}$-regularity estimate on $N\subset M$ with a bound $c_R$
(in Definition~\ref{def.loc.reg}) independent of $(D, C) \in S$. If
$S\subset\maD^{\ell,\emptyset}(M;E)$, we just consider $D \in S$.
\end{definition}

Note that $S$ in the above definition is \emph{not} assumed to be
bounded. However, typically, we shall obtain the independence of the
bound $c_R$ by assuming also that $S$ is compact (or, sometimes, just
precompact).

\begin{proposition}\label{prop.unif.reg}
Assume that $S\subset \maD^{\ell,j}(M;E)$ is \emph{compact} and that
each $(D,C)$ satisfies an $H^{\ell+1}$-regularity estimate on
$M$. Also, assume that the restriction $H^j(M; E) \to
H^{j-1/2}(\partial M; E)$ is continuous for all $j\geq 1$.  Then $S$
satisfies a \emph{uniform} $H^{\ell+1}$-regularity estimate on
$M$. The same result holds if $S\subset \maD^{\ell,\emptyset}(M;E)$.
\end{proposition}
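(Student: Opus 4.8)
The plan is to argue by contradiction, exploiting the compactness of $S$. Suppose $S$ does \emph{not} satisfy a uniform $H^{\ell+1}$-regularity estimate on $M$. Unwinding Definitions~\ref{def.loc.reg} and~\ref{def.unif.reg}, there are pairs $(D_n, C_n) \in S$, indices $k_n \in \{j+1, \ldots, \ell\}$, and compactly supported sections $w_n$ satisfying the hypotheses of Definition~\ref{def.loc.reg} for $(D_n,C_n)$ at level $k_n$; the individual regularity estimate for $(D_n,C_n)$ then forces $w_n \in H^{k_n+1}(M;E)$, yet $\|w_n\|_{H^{k_n+1}}$ exceeds $n$ times the right-hand side of that estimate. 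Since $\{j+1,\ldots,\ell\}$ is finite, after passing to a subsequence we may assume $k_n \equiv k$ is constant. Normalizing $\|w_n\|_{H^{k+1}(M;E)} = 1$, the degeneration of the constant becomes
\begin{equation*}
  \|D_n w_n\|_{H^{k-1}(M;E)} + \|w_n\|_{H^{k}(M;E)} + \|C_n w_n\|_{H^{k-j+1/2}(\pa M;E)} \ \longrightarrow\ 0 .
\end{equation*}

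Next I would use the compactness of $S \subset \maD^{\ell,j}(M;E)$: after a further subsequence, $(D_n, C_n) \to (D,C) \in S$, i.e.\ the coefficients converge in $W^{\ell,\infty}$. The key point is that this forces convergence in operator norm of the induced maps between the relevant Sobolev spaces. Indeed, since $k \le \ell$, the second order operator $D - D_n$, whose coefficients tend to $0$ in $W^{\ell,\infty} \subset W^{k-1,\infty}$, satisfies $\|D - D_n\|_{H^{k+1}(M;E)\to H^{k-1}(M;E)} \to 0$; and, using the continuity of the trace maps $H^i(M;E) \to H^{i-1/2}(\pa M;E)$ hypothesized in the statement (applied to the restriction and, for the normal-derivative part of $C_1$, to $\pa_\nu w_n$), also $\|C - C_n\|_{H^{k+1}(M;E)\to H^{k-j+1/2}(\pa M;E)} \to 0$. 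Since $\|w_n\|_{H^{k+1}} = 1$, it follows that $\|D w_n\|_{H^{k-1}(M;E)} \to 0$ and $\|C w_n\|_{H^{k-j+1/2}(\pa M;E)} \to 0$, so all three data terms associated with the \emph{limit} problem $(D,C)$ tend to $0$.

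Finally, $w_n$ is compactly supported with $w_n \in H^{k}(M;E)$, $D w_n \in H^{k-1}(M;E)$, and $C w_n \in H^{k-j+1/2}(\pa M;E)$, so the (non-uniform) $H^{\ell+1}$-regularity estimate for $(D,C) \in S$ applies with some $c_R > 0$ and gives
\begin{equation*}
  1 \seq \|w_n\|_{H^{k+1}(M;E)}\ \le\ c_R\big( \|D w_n\|_{H^{k-1}(M;E)} + \|w_n\|_{H^{k}(M;E)} + \|C w_n\|_{H^{k-j+1/2}(\pa M;E)} \big)\ \longrightarrow\ 0 ,
\end{equation*}
the desired contradiction. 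The case $S \subset \maD^{\ell,\emptyset}(M;E)$ is identical with the boundary terms deleted, and the variational edge case $\ell = j = 1$ is handled the same way once one rewrites $\tilde D w_n = j_0(f_n,h_n)$ and checks that the data contributed by $\tilde D - \tilde D_n$ tends to $0$ in $L^2(M;E) \oplus H^{1/2}(\pa M;E)$, which again follows from convergence of the coefficients of $a$ and $Q_1$, $Q$ in $W^{1,\infty}$. The step I expect to require the most care is precisely the passage from $W^{\ell,\infty}$-convergence of the coefficients to operator-norm convergence of $D - D_n$ and $C - C_n$ between Sobolev spaces of \emph{all} admissible orders $k \le \ell$ simultaneously — in particular verifying that the fixed coefficient regularity $\ell$ suffices uniformly in $k$, and treating the half-integer orders and the normal derivative occurring in $C_1$ correctly via the trace hypothesis.
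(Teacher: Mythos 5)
Your proof is correct and follows essentially the same route as the paper's: a contradiction argument that uses compactness of $S$ to extract a convergent subsequence $(D_n,C_n)\to(D,C)\in S$, converts $W^{\ell,\infty}$-convergence of the coefficients (plus the trace hypothesis) into operator-norm convergence so that the data for the \emph{limit} problem also degenerate, and then contradicts the regularity estimate for $(D,C)$. The only differences are cosmetic bookkeeping — you normalize $\|w_n\|_{H^{k+1}}=1$ where the paper keeps the $2^i$ factor and an $\epsilon_i$-perturbation inequality — and you are slightly more careful than the paper in fixing the level $k$ by passing to a subsequence.
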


The assumption that the restriction (trace map) $H^j(M; E) \to
H^{j-1/2}(\partial M; E)$ is continuous for all $j\geq 1$ is, of
course, satisfied if $M$ is an open subset of a manifold with boundary
and bounded geometry $M_0$ \cite{GrosseSchneider} (see Theorem
\ref{thm.trace}). In particular, this is the case if $M$ is the domain
of a Fermi coordinate chart. Also, note that here the boundary is in
the sense of manifolds with boundary, and not in the sense of
point-set topology.

\begin{proof} Consider first the case $S\subset \maD^{\ell,j}(M;E)$.
Let us assume the contrary and show that we obtain a contradiction.
That is, let us assume that there exist sequences $(D_i,C_i)\in S$ and
$ w_i\in H^{k+1}(M; E)$, with compact support in $M$, such that
\begin{equation*}
  \| w_i \|_{H^{k+1}(M; E)}\ > \ 2^i \left( \|D_{i}
  w_i\|_{H^{k-1}(M; E)} + \| w_i \|_{H^{k}(M; E)} + \|C_{i}
  w_i\|_{H^{k-j+1/2}(\pa M; E)} \right).
 \end{equation*}
Since $S$ forms a compact subset in $\maD^{\ell,j}(M;E)$, by replacing
$(D_i, C_i)$ with a subsequence, if necessary, we can assume that
$(D_{i}, C_{i})$ converges.  Let us denote the limit with $(D,C)\in
S$.  Thus, there is a sequence $\epsilon_i\to 0$ with
\begin{align*}
  \|D_{i} w\|_{H^{k-1}(M;E)} & \ \geq\ \|D w\|_{H^{k-1}(M; E)}
  -\epsilon_i \| w\|_{H^{k+1}(M; E)},\\
  \|C_{i} w\|_{H^{k-j+\frac{1}{2}}(\partial M;E)} &\ \geq\ \|C
  w\|_{H^{k-j+\frac{1}{2}}(\partial M; E)} -\epsilon_i \|
  w\|_{H^{k+\frac{1}{2}}(\partial M; E)}.
  \end{align*}  
Together with the assumed continuity of the trace map, this implies
that there exists $c' > 0$ such that
  \begin{multline*}
  \| w_i \|_{H^{k+1}(M; E)} \ > \ 2^i \left( \|D w_i\|_{H^{k-1}(M; E)}
  + \| w_i \|_{H^{k}(M; E)} + \|C w_i\|_{H^{k-j+1/2}(\pa M;
    E)}\right.\\
  \left.  -c'\epsilon_i \| w_i \|_{H^{k+1}(M; E)}\right),
 \end{multline*}
for all $i$.  On the other hand, $(D,C)$ satisfies, by assumption, an
$H^{k+1}$- regularity estimate. Consequently, there is a $c >0$ such
that
  \begin{equation*}
  \|D w_i\|_{H^{k-1}(M; E)} + \| w_i \|_{H^{k}(M;E)} + \|C
  w_i\|_{H^{k-j+1/2}(\pa M; E)}\geq c^{-1} \| w_i \|_{H^{k+1}(M;
    E)}\ ,
 \end{equation*}
  and hence we obtain
 \begin{equation*}
  \| w_i \|_{H^{k+1}(M; E)} \geq 2^i(c^{-1}-c'\epsilon_i) \| w_i
  \|_{H^{k+1}(M; E)}\,.
 \end{equation*}
For $i\to \infty$, this gives the desired contradiction since
$2^i(c^{-1}- c' \epsilon_i) \to \infty$, for $i \to \infty$, and $\|
w_i \|_{H^{k+1}(M; E)} \neq 0$.  This completes the proof if $S\subset
\maD^{\ell,j}(M;E)$.

If $S\subset \maD^{\ell,\emptyset}(M;E)$, the proof is obtained by
simply dropping the terms that contain $C w$ from the above proof. (We
can further replace $2 \epsilon_i$ with $\epsilon_i$, but that is not
essential.)
\end{proof}

Recall that a \emph{relatively compact} subset is a subset whose
closure is compact.

\begin{proposition}\label{prop.limit.reg} 
Let $N\subset M$ be a relatively compact open subset and let $S\subset
\maD^{\ell+1,j}(M;E)$ be a \emph{bounded} subset.  Assume that every
$\maD^{\ell,j}(N;E)$--limit $(\tilde{D},\tilde{C})$ of a sequence
$(D_i, C_i)\in S$ satisfies an $H^{\ell+1}$-regularity estimate on
$N$.  Then $S$ satisfies a uniform $H^{\ell+1}$-regularity estimate on
$N$.  The same result holds if $S\subset
\maD^{\ell+1,\emptyset}(M;E)$.
\end{proposition}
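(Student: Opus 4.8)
The strategy is to reduce Proposition~\ref{prop.limit.reg} to Proposition~\ref{prop.unif.reg} by a compactness argument, exploiting the fact that $N$ is relatively compact and $S$ is \emph{bounded} in $\maD^{\ell+1,j}(M;E)$, one derivative better than where the regularity estimate is being asked. First I would observe that, since $N$ is relatively compact, restriction of coefficients gives a natural continuous map $\maD^{\ell+1,j}(M;E) \to \maD^{\ell+1,j}(N;E)$, and the image of the bounded set $S$ is a bounded subset of $\maD^{\ell+1,j}(N;E)$. Over the relatively compact $N$ one has the Rellich--type compact embedding $\maD^{\ell+1,j}(N;E) \hookrightarrow \maD^{\ell,j}(N;E)$ (coefficients bounded in $W^{\ell+1,\infty}$ on a precompact set lie in a compact subset of $W^{\ell,\infty}$), so the closure $\overline{S}$ of (the restriction of) $S$ inside $\maD^{\ell,j}(N;E)$ is compact. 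By hypothesis every $\maD^{\ell,j}(N;E)$--limit of a sequence in $S$ satisfies an $H^{\ell+1}$-regularity estimate on $N$; hence every element of the compact set $\overline{S} \subset \maD^{\ell,j}(N;E)$ does so.

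Next I would like to apply Proposition~\ref{prop.unif.reg} to the compact family $\overline{S} \subset \maD^{\ell,j}(N;E)$ in the ambient manifold $N$: its hypotheses are exactly that $\overline{S}$ is compact, that each of its members satisfies an $H^{\ell+1}$-regularity estimate on $N$, and that the trace map $H^{j}(N;E)\to H^{j-1/2}(\pa N;E)$ is continuous. The last point holds because $N$ is an open subset of the manifold $M$ and the relevant trace continuity on $N$ follows from the trace theorem (Theorem~\ref{thm.trace}) on the coordinate patches covering $\overline{N}$, or simply because $\overline{N}$ is compact. Proposition~\ref{prop.unif.reg} then yields a \emph{uniform} $H^{\ell+1}$-regularity estimate on $N$ for the family $\overline{S}$, hence in particular for $S \subset \overline{S}$, with a constant $c_R$ independent of $(D,C)\in S$. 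The case $S \subset \maD^{\ell+1,\emptyset}(M;E)$ is handled identically by dropping the boundary terms.

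The main obstacle — and the reason the hypothesis is phrased with $\ell+1$ rather than $\ell$ — is justifying that the bounded set $S$ has compact closure in $\maD^{\ell,j}(N;E)$; this is the Arzelà--Ascoli/Rellich step and it genuinely requires both the extra derivative of boundedness and the relative compactness of $N$ (on noncompact $M$ a bounded family of coefficients need not be precompact in any $W^{\ell,\infty}$). A minor technical point to check is that passing to the closure does not enlarge the family beyond sets for which the regularity estimate is known: one argues that any point of $\overline{S}$ is the $\maD^{\ell,j}(N;E)$--limit of a sequence in $S$ (metrizability of these coefficient spaces), so the hypothesis applies verbatim. Once these two points are in place, the result is immediate from Proposition~\ref{prop.unif.reg}.
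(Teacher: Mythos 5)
Your proposal is correct and follows essentially the same route as the paper: restrict to $N$, use Arzel\`a--Ascoli to conclude that the closure of $S$ in $\maD^{\ell,j}(N;E)$ is compact, observe that by hypothesis every element of that closure satisfies an $H^{\ell+1}$-regularity estimate, and then invoke Proposition~\ref{prop.unif.reg}. Your additional verification of the trace-map hypothesis of Proposition~\ref{prop.unif.reg} is a point the paper leaves implicit, and it is handled correctly.
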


We remark that in this proposition the compactness condition of
Proposition~\ref{prop.unif.reg} is replaced by a \emph{higher
  regularity assumption} on the coefficients. This is needed in order
to use the Arzela-Ascoli Theorem.  Moreover, we note that by choosing
a constant sequence, we see that the assumptions imply that each
element in $S$ satisfies an $H^{\ell+1}$-regularity estimate on
$N$. We also note that $(\tilde{D},\tilde{C})$ in the statement
automatically satisfies $(\tilde{D},\tilde{C}) \in
\maD^{\ell,j}(N;E)$.

\begin{proof}[Proof of Proposition~\ref{prop.limit.reg}]
We treat explicitly only the case $S\subset \maD^{\ell+1,j}(M;E)$, the
case $S\subset \maD^{\ell+1,\emptyset}(M;E)$ being completely similar.
Since the coefficients of all boundary value problems in $S$ are
bounded in $W^{\ell+1,\infty}(M)$, the set of coefficients of the
operators $(D, C)\in S$ is precompact in $W^{\ell,\infty}(N)$, by the
Arzela-Ascoli theorem.  Let $K$ be the closure of $S$ in $\maD^{\ell,
  j}(N, E)$, which is therefore a compact set. Moreover, our
assumptions imply that every element in $K$ satisfies an
$H^{\ell+1}$-regularity estimate.  Proposition~\ref{prop.unif.reg}
then implies the result.
\end{proof}

We shall need the following lemma.

\begin{lemma}\label{lemma.restriction}
Let $S \subset {\maD^{\ell, j}(M; E)}$ satisfy a \emph{uniform}
$H^{\ell+1}$-regularity estimate on $M$. If $N \subset M$ is an open
subset, then $S$ satisfies a uniform $H^{\ell+1}$-regularity estimate
on $N$. The same result holds in the boundaryless case.
\end{lemma}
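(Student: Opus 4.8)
The plan is to reduce the estimate on $N$ to the given uniform estimate on $M$ by extending sections by zero. If $N \seq M$ there is nothing to prove, so assume $M\setminus N \neq \varnothing$. Fix $(D,C)\in S$ and let $w\in H^k(N;E)$, $\ell\ge k\ge j+1$, have compact support $K\define \operatorname{supp}(w)\subset N$, with $Dw\in H^{k-1}(N;E)$ and $Cw\in H^{k-j+1/2}(\pa N;E)$, where $\pa N\seq \pa M\cap N$. Since $N$ is open, $K$ lies at positive distance from $M\setminus N$; hence the extension $\tilde w$ of $w$ by zero to $M$ vanishes identically near $M\setminus N$ and agrees with $w$ near every point of $N$, so that $\tilde w\in H^k(M;E)$ and $\|\tilde w\|_{H^k(M;E)}$ is controlled by $\|w\|_{H^k(N;E)}$ with a constant independent of $w$ (indeed one may take the two norms equal).

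Next I would transport the remaining terms of the regularity estimate. As $D$ is a differential operator, it is local: $D\tilde w$ restricted to $N$ equals $Dw$, while $D\tilde w$ vanishes off $K\subset N$; thus $D\tilde w$ is precisely the extension of $Dw$ by zero, so $D\tilde w\in H^{k-1}(M;E)$ with $\|D\tilde w\|_{H^{k-1}(M;E)}$ controlled by $\|Dw\|_{H^{k-1}(N;E)}$. Likewise, since $K\cap \pa M\subset \pa N$ and the boundary operator $C$ is local, $C\tilde w$ is the extension by zero of $Cw$ from $\pa N$ to $\pa M$, so $C\tilde w\in H^{k-j+1/2}(\pa M;E)$ with $\|C\tilde w\|_{H^{k-j+1/2}(\pa M;E)}$ controlled by $\|Cw\|_{H^{k-j+1/2}(\pa N;E)}$. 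Now $\tilde w$ is admissible for the $H^{\ell+1}$-regularity estimate of $(D,C)$ on $M$, which by hypothesis holds with a constant $c_R$ that may be chosen independent of $(D,C)\in S$. Applying it to $\tilde w$ gives $\tilde w\in H^{k+1}(M;E)$ and
\begin{equation*}
  \|\tilde w\|_{H^{k+1}(M;E)} \ \le\ c_R\big(\|D\tilde w\|_{H^{k-1}(M;E)} + \|\tilde w\|_{H^k(M;E)} + \|C\tilde w\|_{H^{k-j+1/2}(\pa M;E)}\big).
\end{equation*}
Restricting back to $N$ (the restriction map between the Sobolev spaces being bounded with a constant independent of $w$) and inserting the three estimates above, one obtains $w\in H^{k+1}(N;E)$ together with the desired inequality on $N$, with a constant that is a fixed multiple of $c_R$ and hence independent of $(D,C)\in S$. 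This is exactly the uniform $H^{\ell+1}$-regularity estimate on $N$.

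The boundaryless case is identical with all $C$-terms deleted. In the special variational case $\ell\seq j\seq 1$ one argues in the same way, using in addition that $\tilde w$ lies in the space $V$ on $M$ (because $e_0\tilde w=0$ on $\pa M$) and that, by the locality of the form $B$, $\tilde D\tilde w = j_0(\tilde f,\tilde h)$ with $\tilde f\in L^2(M;E)$ and $\tilde h\in H^{1/2}(\pa M;E)$ the zero extensions of the data $f,h$ attached to $w$ (these data being automatically supported in $K$). I do not anticipate a genuine difficulty here: the only point requiring care is the behaviour of extension by zero on the Sobolev spaces involved, and this is harmless precisely because $\operatorname{supp}(w)$ is a \emph{compact} subset of the \emph{open} set $N$, so that $\tilde w$, $D\tilde w$ and $C\tilde w$ all vanish in a neighbourhood of $M\setminus N$; the essential structural inputs are just the locality of $D$ and $C$ (and of $B$ in the variational case).
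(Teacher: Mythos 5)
Your proposal is correct and follows the same route as the paper, which disposes of the lemma in one line: a section with compact support in the open set $N$ extends by zero to a section with compact support in $M$, and the uniform estimate on $M$ then gives the estimate on $N$. Your additional remarks (positive distance of $\operatorname{supp}(w)$ to $M\setminus N$, locality of $D$ and $C$, the variational case) merely spell out why that one-line argument works.
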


\begin{proof}
Let $w \in H^k(N; E)$ with compact support. Then $w \in H^k(M; E)$ and
the uniform regularity estimate for $w$ on $M$ yields the desired
result.
\end{proof}

\begin{remark}
To deal with the general case when $C$ is not a boundary condition of
fixed order $j$, but rather combines a boundary condition $C_1$ of
order one and a boundary condition $C_0$ of order zero, we just
replace $\|C w\|_{H^{k-j+1/2}(\pa M; E)}$ with $\|C_1
w\|_{H^{k-1/2}(\pa M; E)} + \|C_0 w\|_{H^{k+1/2}(\pa M; E)}$. In this
case, we may have that both $F_0 \neq 0$ and $F_1 \neq 0$.
\end{remark}

\subsection{Higher regularity and bounded geometry}
\label{ssec.hr}

The relevance of uniform regularity conditions introduced in
Section~\ref{sec.u.est} is that it allows us to obtain higher
regularity on manifolds with boundary and bounded geometry and
suitable boundary conditions as follows.

Let $(M,g)$ be a Riemannian manifold with boundary and bounded
geometry, as before. Let $(P, C)$ be a boundary value problem on
$M$. We shall assume that $(P, C)$ comes from a variational
formulation, since most of the results will require this
assumption. (This means that if $C = (C_0, C_1)$, then $\tilde P$ can
be identified with $(P, C_1)$ and $C_0$ is simply the projection onto
$F_0$ at the boundary.) We assume, for notational simplicity, that $C$
has constant order $j$ at the boundary. For the same reasons, we also
assume $P$ acts and takes values in the same vector bundle $E$ (that
is, $E_1 = E$), as before.  The results of this subsection hold,
however, in full generality (when the vector bundles $E$, $E_1$, and
$F$ of Section~\ref{sec.u.est} are distinct), but with some obvious
changes.  Let $0 < r \leq r_{FC}$, as in Definition~\ref{FC-chart}.
Recall that $U_p$ and $\kappa_p$ from \eqref{eq.Ugamma} and $\xi_p$
from Definition~\ref{def_sync} are such that either $p \in \pa M$ or
$\dist(p, \pa M) \geq r$. We denote by $(P_p, C_p)$ the induced
boundary value problems on $\kappa_p^{-1}(U_p) = B_{2r}^{m}(0)\times
[0,2r)$, if $p \in \pa M$. Then $P_p= \xi_p^*\circ P\circ (\xi_p)_*$
  and $C_p= \xi_p^*\circ C\circ (\xi_p)_*$, with the obvious notation,
  meaning that the operators correspond through the diffeomorphisms
  $\xi_p$. If $\dist(p, \pa M) \geq r$, there is no $C_p$ and we
  obtain a differential operator $P_p$ on $B_{r}^{m+1}(0)$. Let $t$
  denote the rank of $E$ and let
\begin{equation}
 \begin{lgathered}\label{eq.def.F}
 \maF_b \define \{(P_p, C_p) \vert\ p \in \pa M \} \subset \maD^{0,
   j}(B_{2r}^{m}(0)\times [0,2r); \CC^t)\\
 \maF_i \define \{(P_p) \vert\ \dist(p, \pa M) \geq r \} \subset
 \maD^{0, \emptyset}(B_{r}^{m+1}(0); \CC^t),\
\end{lgathered}
\end{equation}
be the induced \emph{boundary} and \emph{interior} families of
operators. Note that we always equip $B_{2r}^{m}(0)\times [0,2r)$,
  (respectively, $B_{r}^{m+1}(0)$) with the euclidean metric.

\begin{theorem}\label{thm.reg0}
Let $(M,g)$ be a Riemannian manifold with boundary and bounded
geometry and let $E\to M$ be a Hermitian vector bundle with totally
bounded curvature.  Let $(P, C)\in \maD^{\ell,j}(M; E)$. If each of
the families $\maF_b \define \{(P_p, C_p)| \ p \in \pa M\}$ and
$\maF_i \ede \{P_p| \ \dist(p, \pa M) \geq r\}$ (see Equation
\ref{eq.def.F} and above) satisfy a uniform $H^{\ell+1}$-regularity
estimate, then $(P,C)$ satisfies an $H^{\ell+1}$-regularity estimate.
The converse is also true.
\end{theorem}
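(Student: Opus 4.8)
The plan is to transfer the regularity statement back and forth between $M$ and the model domains $\kappa_\gamma^{-1}(U_\gamma)$ by means of an $r$-uniform partition of unity $\{\phi_\gamma\}$ subordinate to the cover $\{U_\gamma\}$ (Definitions~\ref{FC} and~\ref{def_part}) together with the synchronous trivializations $\xi_\gamma$ (Definition~\ref{def_sync}). The bounded geometry of $M$ and of $E$ guarantees that the charts $\kappa_\gamma$, their transition maps, the Christoffel symbols, and the $\phi_\gamma$ are all uniformly bounded in $W^{\infty,\infty}$; I would use this to see that the pullbacks $\xi_\gamma^*$ and their inverses $(\xi_\gamma)_*$ are isomorphisms between the Sobolev spaces on $U_\gamma$ and those on $\kappa_\gamma^{-1}(U_\gamma)$ with norms bounded independently of $\gamma$, and that the localized operators $(P_\gamma,C_\gamma)\define(P_{p_\gamma},C_{p_\gamma})$ have $W^{\ell,\infty}$-coefficients with norms bounded independently of $\gamma$, so that $\maF_b$ and $\maF_i$ are bounded subsets of the corresponding $\maD^{\ell,j}$. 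Throughout, I fix $k$ with $j+1\le k\le\ell$ (the variational borderline case $j=1$, $\ell=k=1$ being treated identically, tracking the functional $j_0$ of \eqref{eq.def.jk} in place of the pair $(Pw,Cw)$).

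For the implication that uniform local regularity implies global regularity, I would take $w\in H^{k}(M;E)$ of compact support with $Pw\in H^{k-1}(M;E)$ and $Cw\in H^{k-j+1/2}(\pa M;E)$. Because the $p_\gamma$ are uniformly locally finite and the $U_\gamma$ have uniformly bounded diameter, only finitely many $\phi_\gamma$ are nonzero on the support of $w$. For each such $\gamma$ the section $\xi_\gamma^*(\phi_\gamma w)$ has compact support in $\kappa_\gamma^{-1}(U_\gamma)$, and, writing $P(\phi_\gamma w)=\phi_\gamma Pw+[P,\phi_\gamma]w$ and similarly for $C$, Lemma~\ref{lemma.commutator} (together with its evident analogue for the boundary operator $C$, whose commutator coefficients are likewise supported in $U_\gamma$) gives a constant $C$ independent of $\gamma$ with
\begin{multline*}
 \|P_\gamma\xi_\gamma^*(\phi_\gamma w)\|_{H^{k-1}}+\|C_\gamma\xi_\gamma^*(\phi_\gamma w)\|_{H^{k-j+1/2}}\\
 \le C\big(\|Pw\|_{H^{k-1}(U_\gamma)}+\|Cw\|_{H^{k-j+1/2}(U_\gamma\cap\pa M)}+\|w\|_{H^{k}(U_\gamma)}\big).
\end{multline*}
Feeding this into the uniform $H^{\ell+1}$-regularity estimate for $\maF_b$ (resp.\ $\maF_i$) gives $\xi_\gamma^*(\phi_\gamma w)\in H^{k+1}$ with a bound by the right-hand side above, uniform in $\gamma$. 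I would then square, sum over the finitely many relevant $\gamma$, use the uniformly locally finite overlap of $\{U_\gamma\}$ on the right and the norm equivalence of Proposition~\ref{prop.part.unit} on the left, and conclude that $w\in H^{k+1}(M;E)$ with the estimate of Definition~\ref{def.loc.reg}, the constant $c_R$ depending only on the geometry and on the uniform local constants.

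For the converse, assuming $(P,C)$ satisfies an $H^{\ell+1}$-regularity estimate on $M$, I would take $(P_p,C_p)\in\maF_b$ and $v\in H^{k}(\kappa_p^{-1}(U_p);\CC^t)$ of compact support with $P_pv\in H^{k-1}$ and $C_pv\in H^{k-j+1/2}$, and push forward: $w\define(\xi_p)_*v$ extended by zero is a compactly supported section of $E$ on $M$, with $Pw=(\xi_p)_*(P_pv)$ and $Cw=(\xi_p)_*(C_pv)$ supported in $U_p$ and $U_p\cap\pa M$ respectively (using that $P$ is local). Bounded geometry gives $\|w\|_{H^{k}}\le C\|v\|_{H^{k}}$, $\|Pw\|_{H^{k-1}}\le C\|P_pv\|_{H^{k-1}}$ and $\|Cw\|_{H^{k-j+1/2}}\le C\|C_pv\|_{H^{k-j+1/2}}$ with $C$ independent of $p$; applying the global estimate to $w$ and pulling back by $\xi_p^*$ then yields $v\in H^{k+1}$ with the estimate of Definition~\ref{def.loc.reg} and a constant independent of $p$, i.e.\ the uniform $H^{\ell+1}$-regularity estimate for $\maF_b$. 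The family $\maF_i$ is handled the same way, and more easily, since then $Cw=0$ and only interior regularity of $(P,C)$ enters.

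The hard part will not be any single estimate but the uniformity bookkeeping: making $\xi_\gamma^*$ and $(\xi_\gamma)_*$ uniformly bi-bounded between the relevant Sobolev spaces and the coefficients of $(P_\gamma,C_\gamma)$ uniformly in $W^{\ell,\infty}$ (this is where bounded geometry is used), and combining the uniform commutator bound of Lemma~\ref{lemma.commutator} with the uniformly locally finite overlap of $\{U_\gamma\}$ so that the sum over $\gamma$ does not diverge. A small compatibility point to keep track of is that the notion of compact support on a boundary model domain $B^{m}_{2r}(0)\times[0,2r)$ must be read so that the support may meet the genuine boundary $\{t=0\}$ but stays away from $\{|x|=2r\}$ and $\{t=2r\}$, which is what matches the zero-extensions across $\pa M$ used in both directions.
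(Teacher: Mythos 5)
Your proposal is correct and follows essentially the same route as the paper: localization via an $r$-uniform partition of unity and the synchronous trivializations, with Lemma~\ref{lemma.commutator}, the uniform local finiteness of the cover, and Proposition~\ref{prop.part.unit} controlling the commutator terms and the resummation, and the converse by pushing forward compactly supported model data (a step the paper itself only sketches, deferring it to the localization remark in the proof of Theorem~\ref{thm.SL}). The one detail to make explicit is the boundary commutator for $j=1$: the paper chooses $\phi_\gamma$ with $\pa_{\nu}\phi_\gamma=0$ on $\pa M$ so that $[C,\phi_\gamma]=[C_{10},\phi_\gamma]$ and then bounds $\sum_\gamma\Vert[C,\phi_\gamma]u\Vert^2_{H^{k-1/2}}$ by $\Vert u\Vert^2_{H^k}$ via the trace theorem, whereas your generic partition leaves an additional order-zero term $C_{11}(\pa_{\nu}\phi_\gamma)\,u\vert_{\pa M}$, which is still controlled by the same trace argument but should be spelled out rather than attributed to an ``evident analogue'' of Lemma~\ref{lemma.commutator}.
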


\begin{proof}
This follows from Definition~\ref{def.loc.reg} of uniform order $k$
regularity estimates, from Proposition~\ref{prop.part.unit}, and from
Lemma \ref{lemma.commutator}. Let us choose the $r$-uniform partition
of unity $\phi_\gamma$ used in those results such that $\pa_{\nu}
\phi_\gamma$ vanishes at the boundary. This can be done first by
choosing an $r$-uniform partition of unity on $\pa M$.
\begin{align*}
 \Vert u\Vert_{H^{k+1}}^2\lesssim& \sum_\gamma \Vert \xi_\gamma^*
 (\phi_\gamma u)\Vert_{H^{k+1}}^2\\
 \lesssim& \sum_\gamma \left( \Vert P_\gamma \xi_\gamma^* (\phi_\gamma
 u)\Vert_{H^{k-1}} +\Vert C_\gamma \xi_\gamma^* (\phi_\gamma
 u)\Vert_{H^{k-j+\frac{1}{2}}} + \Vert \xi_\gamma^* (\phi_\gamma
 u)\Vert_{H^{k}}\right)^2\\
 \lesssim & \sum_\gamma \left( \Vert \xi_\gamma^* P (\phi_\gamma
 u)\Vert_{H^{k-1}}^2 +\Vert \xi_\gamma^* C (\phi_\gamma
 u)\Vert_{H^{k-j+\frac{1}{2}}}^2 + \Vert \xi_\gamma^* (\phi_\gamma
 u)\Vert_{H^{k}}^2\right)\\
 \lesssim& (\Vert P u\Vert_{H^{k-1}}^2 +\Vert C
 u\Vert_{H^{k-j+\frac{1}{2}}}^2 + \Vert u\Vert_{H^{k}})^2 +
 \sum_\gamma \Vert [P,\phi_\gamma] u\Vert^2_{H^{k-1}} \\
  & + \sum_\gamma \Vert [C,\phi_\gamma] u\Vert^2_{H^{k-j+\frac{1}{2}}}
 \,,
\end{align*}
since the trivializations $\xi_\gamma$ have uniformly bounded norms.
Next we notice that $\sum_\gamma \Vert [P,\phi_\gamma]
u\Vert^2_{H^{k-1}}\lesssim \Vert u \Vert^2_{H^{k}}$ since the family
$\phi_\gamma$ is uniformly locally finite and by
Lemma~\ref{lemma.commutator}. The boundary term is treated similarly:
 $\sum_\gamma \Vert [C,\phi_\gamma]
u\Vert^2_{H^{k-j-\frac{1}{2}}} \lesssim \Vert u
\Vert^2_{H^{k-\frac{1}{2}}} \lesssim \Vert u \Vert^2_{H^{k}}$.  Here
the last inequality is given by the trace theorem. For the first
inequality we note that for $j=0$ the commutator is actually zero. For
$j=1$ we have $[C,\phi_\gamma]=[C_{10},\phi_\gamma]$ since
multiplication by $\phi_\gamma$ and $\pa_{\nu}$ commute, given the
product form of $\phi_\gamma$ near the boundary.  Together with Lemma
\ref{lemma.commutator}, this completes the proof.
\end{proof}

\begin{remark}
The method of proof of Theorem~\ref{thm.reg0} will yield similar
global results in other classes of spaces, as long as the local
regularity results are available and as long as a local description of
these spaces using partitions of unity is available. This is the case
for the $L^p$-Sobolev spaces, $1 < p < \infty$, for which we have both
the local description using partitions of unity (Proposition
\ref{prop.part.unit}) and the local regularity results
\cite{TrudingerBook}.
\end{remark}

\subsection{Regularity for Dirichlet boundary conditions}
The results of the previous two sections were tailored to deal with
the Dirichlet boundary conditions. It is known \cite{Agranovich07,
  AgranovichBook, Taylor1} that strongly elliptic operators with
Dirichlet boundary conditions satisfy regularity conditions.  We
formulate this well-known result as a lemma for further use. As
before, $E \to M$ will be a vector bundle with bounded geometry.

\begin{lemma}\label{lemma.euclidean}
Let $(P, C) \in \maD^{\ell, 0}(B^m_r(0) \times [0, r); E)$, $0 < r\leq
  \infty$, be a uniformly strongly elliptic boundary value problem
  with \emph{Dirichlet boundary conditions.} Then $P$ satisfies an
  $H^{\ell+1}$-regularity estimate on $M$. The same result holds for a
  uniformly elliptic operator $P \in \maD^{\ell, \emptyset}(B^m_r(0);
  E)$, $0 < r\leq \infty$.
\end{lemma}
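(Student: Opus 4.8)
The plan is to reduce this to the classical interior and boundary regularity theory for strongly elliptic systems with Dirichlet conditions on a half-space (or full space), which is exactly the content of Agmon--Douglis--Nirenberg and its successors, cited in the paper. Since $B^m_r(0)\times[0,r)$ is a fixed (not necessarily bounded) domain with flat metric and flat boundary, and $P$ is a genuine differential operator with $W^{\ell,\infty}$ coefficients whose principal symbol $\xi\mapsto a(\xi,\xi)$ is uniformly strongly elliptic in the sense of Definition~\ref{def.u.s.e}, this is precisely the situation where the Nirenberg difference-quotient method applies. I would first recall the \emph{a priori} estimate on the half-space $\RR^m_+ = \RR^{m-1}\times[0,\infty)$ for constant-coefficient strongly elliptic systems with Dirichlet data, and the interior estimate on $\RR^m$; these are the standard building blocks.

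Next I would carry out the usual localization-and-freezing argument, being careful that all constants come out independent of the base point (so that the conclusion is genuinely a \emph{uniform} regularity estimate, as needed for the application in Theorem~\ref{thm.reg0}). Concretely: given $w$ with compact support, $Pw \in H^{k-1}$, $Cw = w|_{\partial}\in H^{k+1/2}$, one covers the support by balls of a fixed small radius; on each ball one writes $P = P_0 + (P-P_0)$ where $P_0$ has the coefficients of $P$ frozen at the center. The term $(P-P_0)$ applied to a cutoff of $w$ has small $L^\infty$ norm of its top-order coefficients (by uniform continuity of the $W^{\ell,\infty}$ coefficients, which gives a modulus of continuity uniform in the base point once the radius is small enough) plus lower-order terms, so it can be absorbed. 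For the constant-coefficient operator $P_0$ one either uses the half-space Dirichlet estimate (near the boundary) or the interior estimate; the commutators $[P,\chi]$ with the cutoffs $\chi$ are lower order and estimated by $\|w\|_{H^k}$ via Lemma~\ref{lemma.commutator}. Summing over the finite cover with uniformly bounded cutoffs and using the uniform local finiteness yields
\begin{equation*}
  \|w\|_{H^{k+1}} \le c_R\left(\|Pw\|_{H^{k-1}} + \|w|_{\partial M}\|_{H^{k+1/2}} + \|w\|_{H^{k}}\right),
\end{equation*}
with $c_R$ depending only on $\ell$, $m$, the rank of $E$, the ellipticity constant $c_a$, and the $W^{\ell,\infty}$ bounds on the coefficients --- hence \emph{uniform} in the relevant sense. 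Induction on $k$ from $k=j+1=1$ up to $k=\ell$ upgrades regularity one derivative at a time, the boundary term being handled by the trace theorem (Theorem~\ref{thm.trace}) exactly as in the proof of Theorem~\ref{thm.reg0}. For the boundaryless statement on $B^{m+1}_r(0)$ one simply drops the boundary term and uses only the interior estimate.

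I expect the main obstacle to be \emph{not} the abstract scheme --- which is textbook --- but the bookkeeping required to ensure that no constant secretly depends on the domain size $r$ or, in the case $r = \infty$, on the non-compactness of $B^m_\infty(0)\times[0,\infty) = \RR^m_+$. The clean way around this is to note that the half-space and full-space estimates are scale-invariant and translation-invariant, so they hold on $\RR^m_+$ and $\RR^m$ with constants depending only on $m$, the rank, $\ell$, and the ellipticity/coefficient bounds; for finite $r$ one extends $w$ (which has compact support in the \emph{open} manifold, though not necessarily away from $\partial M$) by zero, or just works directly since the a priori estimate is local. A secondary point worth spelling out is the reduction from the genuinely variational/divergence-form operator $P$ to a classical second-order operator with $W^{\ell,\infty}$ coefficients: since $a$ has $W^{\ell,\infty}$ coefficients one may differentiate through the divergence, writing $P = \sum a_{ij}\partial_i\partial_j + (\text{lower order})$, which fits the ADN framework and whose principal part is controlled by $c_a$ via Definition~\ref{def.u.s.e}; this is routine but should be mentioned so the citation to \cite{Agranovich07, AgranovichBook, Taylor1} is literally applicable.
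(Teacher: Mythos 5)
Your outline is a correct rendition of the standard proof, but note that the paper does not actually prove Lemma~\ref{lemma.euclidean} at all: it is explicitly presented as a ``well-known result'' and justified only by citation to \cite{Agranovich07, AgranovichBook, Taylor1}. So you are supplying an argument the paper deliberately omits, and the argument you supply --- constant-coefficient half-space and interior estimates, localization, freezing of coefficients, absorption of the small perturbation, commutators with cutoffs handled as lower-order terms, and induction on $k$ --- is exactly the classical ADN/Nirenberg scheme those references contain. Two remarks on the comparison. First, the lemma itself only asserts a regularity estimate for a \emph{single} operator; the uniformity over the family $\maF_b$ that Theorem~\ref{thm.reg0} needs is obtained by the paper in Corollary~\ref{cor.Dir} via the compactness argument of Proposition~\ref{prop.limit.reg}, at the cost of assuming $W^{\ell+1,\infty}$ coefficients (one extra derivative, to invoke Arzela--Ascoli). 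Your insistence on tracking constants so that they depend only on $m$, the rank, $\ell$, $c_a$ and the $W^{\ell,\infty}$ bounds would, if carried out, give the uniform statement directly and make that extra derivative unnecessary --- a genuine, and arguably cleaner, alternative route to Corollary~\ref{cor.Dir}, though more bookkeeping-intensive. Second, your worry about the modulus of continuity of the top-order coefficients is resolved by the standing hypothesis $\ell \ge j+1 = 1$ in Definition~\ref{def.loc.reg}: coefficients in $W^{1,\infty}$ are uniformly Lipschitz, so the radius needed for the freezing step is controlled by the coefficient bounds alone. Your final paragraph about reducing a divergence-form operator to a non-divergence one is harmless but unnecessary here, since elements of $\maD^{\ell,0}$ are already given as $\sum_j a_j \nabla^j$ with $W^{\ell,\infty}$ coefficients.
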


From this we obtain

\begin{corollary}\label{cor.Dir}
Let $S \subset \maD^{\ell+1, 0}(B^m_r(0) \times [0, r); E)$ be a
  \emph{bounded, uniformly strongly elliptic family} of boundary value
  problems on $B^m_r(0) \times [0, r)\subset \RR^{m+1}$ equipped with
    the euclidean metric, $r \leq \infty$. We assume that the boundary
    conditions are all Dirichlet.  Then the family $S$ satisfies a
    uniform $H^{\ell+1}$-regularity estimate on $B^m_{r'}(0) \times
    [0, r')$, $r'<r$.
\end{corollary}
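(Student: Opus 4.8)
The plan is to assemble the statement from Lemma~\ref{lemma.euclidean} and the compactness argument of Proposition~\ref{prop.limit.reg}. Write $M \define B^m_r(0)\times[0,r)$ and $N \define B^m_{r'}(0)\times[0,r')$. First I would observe that $N$ is a relatively compact open subset of $M$: it is open in $M$ since $[0,r')$ is open in $[0,r)$, and its closure in $M$ is $\overline{B^m_{r'}(0)}\times[0,r']$, which is compact (here $r'<\infty$) and contained in $M$ precisely because $r'<r$. Since $S\subset\maD^{\ell+1,0}(M;E)$ is bounded by hypothesis, Proposition~\ref{prop.limit.reg}, applied with $j=0$, reduces the claim to showing that every $\maD^{\ell,0}(N;E)$-limit $(\tilde P,\tilde C)$ of a sequence $(P_i,C_i)\in S$ satisfies an $H^{\ell+1}$-regularity estimate on $N$.

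To that end I would check that $(\tilde P,\tilde C)$ is again a uniformly strongly elliptic boundary value problem with Dirichlet boundary conditions on $N$. Convergence in $\maD^{\ell,0}(N;E)$ means that all coefficients of the $P_i$ converge in $W^{\ell,\infty}(N)$, hence in particular uniformly; so the principal symbols $a_i$ of the $P_i$ converge uniformly to the principal symbol $\tilde a$ of $\tilde P$, and the inequality
\begin{equation*}
  \Re\big(a_i(\eta\otimes\xi,\eta\otimes\xi)\big)\ \ge\ c_a\,\|\eta\|^2\,\|\xi\|^2 ,
\end{equation*}
valid with a common constant $c_a$ because $S$ is a \emph{uniformly} strongly elliptic family, passes to $\tilde a$. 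Moreover each $C_i$ is the Dirichlet boundary condition, i.e.\ the restriction to $\pa M\cap N$, so the limit $\tilde C$ is the same restriction map and is again Dirichlet. (If one allows the $C_i$ to be composed with invertible order-zero bundle maps, these are uniformly bounded in $W^{\ell+1,\infty}$ with uniformly bounded inverses, and the conclusion is unchanged.)

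Finally, $N$ is itself of the form $B^m_{r'}(0)\times[0,r')$ with $0<r'\le\infty$, so Lemma~\ref{lemma.euclidean}, applied with $r'$ in place of $r$, shows that the uniformly strongly elliptic Dirichlet problem $(\tilde P,\tilde C)$ satisfies an $H^{\ell+1}$-regularity estimate on $N$. This is exactly the hypothesis required by Proposition~\ref{prop.limit.reg}, which then yields that $S$ satisfies a uniform $H^{\ell+1}$-regularity estimate on $N=B^m_{r'}(0)\times[0,r')$, as asserted. The genuine content of the corollary is already contained in Lemma~\ref{lemma.euclidean} (classical boundary regularity for strongly elliptic Dirichlet problems) and in the Arzel\`a--Ascoli compactness built into Proposition~\ref{prop.limit.reg}; the only point that needs (routine) care is that passing to a $\maD^{\ell,0}(N;E)$-limit preserves uniform strong ellipticity with the \emph{same} constant and preserves the Dirichlet nature of the boundary condition---both of which follow from the uniform convergence of the coefficients.
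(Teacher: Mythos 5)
Your proposal is correct and follows essentially the same route as the paper's own proof: reduce to Proposition~\ref{prop.limit.reg} via the relative compactness of $B^m_{r'}(0)\times[0,r')$ in $B^m_{r}(0)\times[0,r)$, check that $\maD^{\ell,0}$-limits of problems in $S$ remain uniformly strongly elliptic (with the common constant $c_a$) and Dirichlet, and apply Lemma~\ref{lemma.euclidean} to the limit problem. The extra details you supply (why $N$ is relatively compact, why the limit keeps the ellipticity constant) are exactly the points the paper treats more tersely.
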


\begin{proof} 
Let $(D_n, C_n) \in S$ converge to $(D, C) \in \maD^{\ell,
  j}(B^m_{r'}(0) \times [0, r'); E)$. Then $D$ is a uniformly strongly
  elliptic operator because the parameter $c_a$ stays away from 0 on
  $S$, in view of Definition~\ref{def.u.s.e}.
  Lemma~\ref{lemma.euclidean}, then gives that $(D, C)$ satisfies an
  $H^{\ell+1}$-regularity estimate, since the type of boundary
  conditions (Dirichlet or Neumann) do not change by taking
  limits. This allows us to use Proposition~\ref{prop.limit.reg} for
  the relatively compact subset $N \define B^m_{r'}(0) \times [0, r')$
    of $M \define B^m_{r}(0) \times [0, r)$ to obtain the result.
\end{proof}

Analogously, (in fact, even more directly, since we do not have to take
boundary conditions into account), we obtain

\begin{corollary}\label{cor.unif.elliptic}
Let $S \subset \maD^{\ell+1, \emptyset}(B^{m+1}_r(0); E)$ be a bounded
uniformly elliptic family of differential operators on $B^{m+1}_r(0)
\subset \RR^{m+1}$, for some $0 < r\leq \infty$.  Then the family $S$
satisfies a uniform $H^{\ell+1}$-regularity estimate on
$B^{m+1}_{r'}(0) \subset \RR^{m+1}$ for any $r' < r$.
\end{corollary}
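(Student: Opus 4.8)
The plan is to repeat, essentially verbatim, the argument used for Corollary~\ref{cor.Dir}, but now in the boundaryless setting, so that all boundary terms simply drop out. The two ingredients are the interior (second) part of Lemma~\ref{lemma.euclidean}, which asserts that a single uniformly elliptic operator $D \in \maD^{\ell, \emptyset}(B^{m+1}_\rho(0); E)$ satisfies an $H^{\ell+1}$-regularity estimate, and the boundaryless case of Proposition~\ref{prop.limit.reg}, which upgrades such a pointwise statement to a uniform one over a bounded family once one controls all $\maD^{\ell,\emptyset}$-limits of sequences from that family.

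Concretely, I would first fix $r' < r$ and set $N \define B^{m+1}_{r'}(0)$ and $M \define B^{m+1}_{r}(0)$; since $r' < r$, the closure $\overline{B^{m+1}_{r'}(0)}$ is compact and contained in $B^{m+1}_r(0)$, so $N$ is a relatively compact open subset of $M$ (this also handles the case $r = \infty$). Next, let $(D_n) \in S$ be any sequence converging in $\maD^{\ell, \emptyset}(N; E)$ to some operator $D$. I would then check that $D$ is again uniformly elliptic: the ellipticity constant $c_e$ of Definition~\ref{def.u.s.e} is, by hypothesis, the same for all members of $S$, and the inequality defining uniform ellipticity is closed under the relevant limits, because the principal symbol depends continuously on the coefficients, which converge in $W^{\ell, \infty}(N)$, hence in particular uniformly on $N$. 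Therefore $D \in \maD^{\ell, \emptyset}(N; E)$ is uniformly elliptic, and the interior part of Lemma~\ref{lemma.euclidean}, applied on $B^{m+1}_{r'}(0)$, shows that $D$ satisfies an $H^{\ell+1}$-regularity estimate on $N$.

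Finally, I would invoke Proposition~\ref{prop.limit.reg} with this choice of $N \subset M$ and with the given bounded subset $S \subset \maD^{\ell+1, \emptyset}(M; E)$: the hypothesis of that proposition, that every $\maD^{\ell, \emptyset}(N; E)$-limit of a sequence from $S$ satisfies an $H^{\ell+1}$-regularity estimate on $N$, has just been verified, so its conclusion gives that $S$ satisfies a uniform $H^{\ell+1}$-regularity estimate on $N = B^{m+1}_{r'}(0)$, which is exactly the claim.

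There is no real obstacle here; the only points requiring a moment's care are (i) confirming that uniform ellipticity with a fixed constant passes to limits under $W^{\ell,\infty}$-convergence of coefficients, which is immediate from the continuity of the symbol in the coefficients together with the fact that $c_e$ is uniform over $S$ by assumption, and (ii) the relative compactness of $B^{m+1}_{r'}(0)$ in $B^{m+1}_{r}(0)$, which needs the strict inequality $r' < r$ and is precisely what forces the slight shrinking of the ball in the conclusion.
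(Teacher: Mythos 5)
Your argument is exactly the one the paper intends: the paper gives no separate proof of Corollary~\ref{cor.unif.elliptic}, stating only that it follows ``analogously'' (and more directly) to Corollary~\ref{cor.Dir}, whose proof is precisely your combination of Lemma~\ref{lemma.euclidean} (interior case) with Proposition~\ref{prop.limit.reg} applied to the relatively compact subset $B^{m+1}_{r'}(0)\subset B^{m+1}_{r}(0)$. The proposal is correct and matches the paper's approach.
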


\begin{remark}
The regularity results of this section extend to the $L^p$-Sobolev
spaces $W^{\ell, p}$, $1 < p < \infty$, with essentially the same
proofs by using also the results in \cite{TrudingerBook}.
\end{remark}

Note that in Corollaries \ref{cor.Dir} and \ref{cor.unif.elliptic} we
use a slightly stronger assumption on the coefficients of our
operators than usually, namely we require them to have $W^{\ell +1 ,
  \infty}$-regularity (usually we require only $W^{\ell,
  \infty}$-regularity). This is required since we will use Proposition
\ref{prop.limit.reg}. Combining these result, we obtain the following.

\begin{theorem}\label{thm.reg.D}
Let $P$ be a uniformly strongly elliptic second order differential
operator with coefficients in $W^{\ell+1, \infty}$ acting on sections
of $E \to M$.  Then there exists $c > 0$ such that, if $u \in
H^{\ell}(M; E)$, $Pu \in H^{\ell-1}(M; E)$, and $u\vert_{\pa M} \in
H^{\ell+1/2}(\pa M; E)$, then $u \in H^{\ell+1}(M; E)$ and
 \begin{equation*}
   \| u \|_{H^{\ell+1}(M; E)} \leq c \left( \|P u\|_{H^{\ell-1}(M; E)} + \| u
   \|_{H^{\ell}(M; E)} + \| u \|_{H^{\ell + 1/2}(\pa M; E)} \right).
  \end{equation*}
\end{theorem}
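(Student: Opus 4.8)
The plan is to reduce Theorem~\ref{thm.reg.D} to the local regularity results already obtained via Theorem~\ref{thm.reg0}. Recall that Theorem~\ref{thm.reg0} says $(P,C)$ satisfies an $H^{\ell+1}$-regularity estimate on $M$ if and only if the boundary family $\maF_b$ and the interior family $\maF_i$ both satisfy \emph{uniform} $H^{\ell+1}$-regularity estimates on the model domains $B^m_{2r}(0)\times[0,2r)$ and $B^{m+1}_r(0)$ respectively. Here the boundary condition is the Dirichlet condition $C u = u\vert_{\pa M}$, which is of order $j=0$, and $P$ is uniformly strongly elliptic with coefficients in $W^{\ell+1,\infty}$. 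So the entire content is to verify the uniform regularity of these two local families.

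First I would observe that, because $M$ has bounded geometry and $E$ has totally bounded curvature, the localized operators $P_p = \xi_p^* \circ P \circ (\xi_p)_*$ have coefficients that are uniformly bounded in $W^{\ell+1,\infty}$ on the model domains, uniformly in $p$; this is exactly the kind of statement encoded in the construction of Fermi/geodesic normal coordinates and synchronous trivializations (as used in Lemma~\ref{lemma.prop.exists} and Lemma~\ref{lemma.embedding}), so $\maF_b$ and $\maF_i$ are \emph{bounded} subsets of $\maD^{\ell+1,0}(B^m_{2r}(0)\times[0,2r);\CC^t)$ and $\maD^{\ell+1,\emptyset}(B^{m+1}_r(0);\CC^t)$. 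Moreover, uniform strong ellipticity of $P$ (in the sense of Definition~\ref{def.u.s.e}, with a fixed constant $c_a$) is inherited by the localized operators with the \emph{same} constant $c_a$ — up to a harmless distortion from passing to geodesic normal coordinates, where the metric is $C^0$-close to Euclidean on a small enough patch and can be absorbed into $c_a$. Hence $\maF_b$ is a bounded, uniformly strongly elliptic family of Dirichlet boundary value problems on the fixed model half-ball, and $\maF_i$ is a bounded uniformly elliptic family on the fixed model ball.

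Now I would simply invoke Corollary~\ref{cor.Dir} for the boundary family $\maF_b$ (shrinking $2r$ to $r$, or any $r'<2r$, which is harmless since the covering sets $U_p$ already contain a slightly smaller patch that still covers $M$) and Corollary~\ref{cor.unif.elliptic} for the interior family $\maF_i$ (again shrinking $r$ to any $r'<r$). These corollaries, which rest on the classical Euclidean-domain regularity result Lemma~\ref{lemma.euclidean} together with the Arzel\`a--Ascoli/compactness argument of Proposition~\ref{prop.limit.reg}, give precisely that $\maF_b$ and $\maF_i$ satisfy uniform $H^{\ell+1}$-regularity estimates on slightly smaller model domains. By Lemma~\ref{lemma.restriction} we may pass freely between the model domains of radius $2r$ and those of a slightly smaller radius on which our covering is still valid. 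Feeding this into Theorem~\ref{thm.reg0} yields that $(P, \mathrm{res})$ satisfies an $H^{\ell+1}$-regularity estimate on $M$, which is exactly the claimed estimate
\begin{equation*}
  \|u\|_{H^{\ell+1}(M;E)} \le c\left( \|Pu\|_{H^{\ell-1}(M;E)} + \|u\|_{H^{\ell}(M;E)} + \|u\|_{H^{\ell+1/2}(\pa M;E)}\right),
\end{equation*}
after noting that Definition~\ref{def.loc.reg} applied with $k=\ell$ gives the statement for compactly supported $w$, and the general case follows by the standard truncation/partition-of-unity density argument (the sum in Proposition~\ref{prop.part.unit} can be truncated).

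The only genuine subtlety — and the step I expect to need the most care — is the claim that uniform strong ellipticity and the $W^{\ell+1,\infty}$-bounds on the coefficients are preserved, \emph{uniformly in $p$}, under the localization maps $\xi_p^* \circ (\,\cdot\,) \circ (\xi_p)_*$. This requires that the transition data (the coordinate changes $\kappa_p$, the trivializations $\xi_p$, the Christoffel symbols, and their derivatives up to order $\ell+1$) be bounded uniformly in $p$ — which is exactly the defining feature of bounded geometry and totally bounded curvature, already recorded in this paper (e.g.\ in the proof of Lemma~\ref{lemma.prop.exists} and in \cite{GrosseSchneider}). Once this bookkeeping is in place, everything else is a direct citation of the earlier results; there is no new analytic difficulty.
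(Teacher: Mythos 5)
Your proposal is correct and follows essentially the same route as the paper: the paper's proof of Theorem~\ref{thm.reg.D} consists precisely of invoking Corollaries~\ref{cor.Dir} and~\ref{cor.unif.elliptic} to verify the hypotheses of Theorem~\ref{thm.reg0} and then concluding. The extra bookkeeping you supply (uniform $W^{\ell+1,\infty}$-bounds and uniform strong ellipticity of the localized families, and the harmless shrinking of the model radii) is left implicit in the paper but is exactly the right justification.
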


\begin{proof} 
Corollaries \ref{cor.Dir} and \ref{cor.unif.elliptic} show that the
assumptions of Theorem~\ref{thm.reg0} are satisfied. That theorem
immediately gives our result.
\end{proof}

An alternative proof of this result is obtained using the methods of
Section~\ref{sec.coercive}. The advantage of the method used in this
section is that it applies right away to higher order equations.

\section{A uniform Shapiro-Lopatinski regularity condition}\label{sec.SL}
The case of Neumann boundary conditions seems to be different (at
least for systems, see \cite{AgranovichBook, Agranovich07, Taylor1,
  Taylor2}).  This case, as well as that of Robin boundary conditions
motivates, in part, the results of this section. In particular, we
introduce a uniform version of the Shapiro-Lopatinski condition
\cite{ADN1, AgranovichBook, Agranovich07, DaugeBook, Hormander3,
  Lopatinski, NP, Taylor1}, which turns out to characterize the
operators satisfying regularity. Our approach has several points in 
common to the ones in \cite{LionsMagenes1, Pryde}. To deal with the concrete case of
Robin (and Neumann) boundary conditions, we use positivity to check
that the uniform Shapiro-Lopatinski regularity conditions are
satisfied.

\subsection{Homogeneous Sobolev spaces and regularity conditions}
We need first the following homogeneous (with respect to dilations)
versions of the usual Sobolev spaces. This setting was used for
similar purposes in \cite{Pryde}. For simplicity, we work in
  $\RR^n$, but the same considerations apply to any vector space $V$
  endowed with a metric (or a half-vector space $V_+ \subset V$).
  (See, however, Equation \eqref{eq.conf.inv} for the dependence of
  the norms on the choice of the metric.)

Let $\hat u$ be the Fourier transform of $u$, regarded as a tempered
distribution (the normalizations are not important here). Consider the
\emph{semi-norm}
\begin{equation}
 |u|_{H^s(\RR^n)}^2 \define \int_{\RR^n} |\xi|^{2s} |\hat u(\xi)|^2
 d\xi,
\end{equation}
and $\cdotH{s}(\RR^n)\define \{ u\, \vert \ |u|_{H^s(\RR^n)} < \infty
\}$. Here $u$ is such that $|\xi|^{s} |\hat u(\xi)|$ is a function
(but $|\hat u(\xi)|$ is \emph{not assumed} to be a function, which
allows us to include polynomials of low degree in $\cdotH{s}(\RR^n)$).
When $s \in \ZZ_+$, the seminorm $|u|_{H^s(\RR^n)}$ is equivalent to
the (usual) $\sum_{|\alpha| = s} \|\pa^\alpha u\|_{L^2(\RR^n)}$, which
allows us to define in this case also $|u|_{H^s(\RR^n_+)}$ for a
function $u$ defined only on a half-space of the form
$\RR^{n}_+\define \RR^{n-1} \times [0, \infty)$. In what
    follows, we shall write simply $|\cdot|_{H^s}$ for the above
    semi-norms. The reason for considering the semi-norms $|\cdot
  |_{H^s}$ and the spaces $\cdotH{s}(\RR^n)$ is that they have good
  dilation properties (see the next lemma). These definitions extend
  right away to functions with values in $\CC^N$ yielding the spaces
  $\cdotH{s}(\RR^n; \CC^N) = \cdotH{s}(\RR^n)^N$.

\begin{lemma} \label{lemma.classical} 
Let $s > 0$.
\begin{enumerate} [(i)]
\item Let $\alpha_t (f) (s) = f(ts)$, then $|\alpha_t(f) |_{H^s} =
  t^{s-n/2} |f|_{H^s}$.

\item $\lim_{t \to \infty} t^{-s + n/2} \|\alpha_t(f) \|_{H^s} =
  |f|_{H^s}$.

\item If $T$ is an order $k$, homogeneous, constant coefficient
  differential operator, then it defines continuous maps $T \colon
  \cdotH{s}(\RR^n) \to \cdotH{s-k}(\RR^n)$.

\item If $s \in \NN$, the restrictions $\cdotH{s}(\RR^n) \to
  \cdotH{s}(\RR^n_+)$ and $\cdotH{s}(\RR^n) \to
  \cdotH{s-1/2}(\RR^{n-1})$ are continuous and surjective and, if also
  $T$ is as in (iii) and has order $k \leq s$, $T \colon
  \cdotH{s}(\RR^n_+) \to \cdotH{s-k}(\RR^n_+)$ is continuous.
\end{enumerate} 
\end{lemma}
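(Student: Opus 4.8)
The plan is to dispatch (i)--(iii) by short computations on the Fourier side and to concentrate the real work on (iv). For (i) I would use $\widehat{\alpha_t f}(\xi)=t^{-n}\hat f(\xi/t)$ and the substitution $\eta=\xi/t$ in the defining integral, which turns $\int|\xi|^{2s}|\widehat{\alpha_t f}(\xi)|^2\,d\xi$ into $t^{2s-n}\int|\eta|^{2s}|\hat f(\eta)|^2\,d\eta$, exactly as claimed. For (ii), with $\|\cdot\|_{H^s}$ the full norm, the same substitution gives $t^{-2s+n}\|\alpha_t(f)\|_{H^s}^2=\int_{\RR^n}(t^{-2}+|\eta|^2)^s|\hat f(\eta)|^2\,d\eta$; since $s>0$ the integrand decreases monotonically to $|\eta|^{2s}|\hat f(\eta)|^2$ as $t\to\infty$, so monotone convergence yields the limit $|f|_{H^s}^2$ (applied, as it will be, to $f\in H^s(\RR^n)$). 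For (iii), writing $T=\sum_{|\alpha|=k}c_\alpha\pa^\alpha$ one has $\widehat{Tu}=p\,\hat u$ with $p$ homogeneous of degree $k$; since $|\xi|^{-k}p(\xi)$ is bounded, $|\xi|^{s-k}|\widehat{Tu}(\xi)|\le C|\xi|^{s}|\hat u(\xi)|$ pointwise, so $Tu\in\cdotH{s-k}(\RR^n)$ with $|Tu|_{H^{s-k}}\le C|u|_{H^s}$.

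For (iv) the starting point is that, since $s\in\NN$, $|u|_{H^s}$ on $\RR^n$ (resp. on $\RR^n_+$) is equivalent to $\bigl(\sum_{|\alpha|=s}\|\pa^\alpha u\|_{L^2}^2\bigr)^{1/2}$, as recalled before the lemma. Continuity of $T\colon\cdotH{s}(\RR^n_+)\to\cdotH{s-k}(\RR^n_+)$ is then immediate: for $|\alpha|=s-k$, $\pa^\alpha(Tu)=\sum_{|\beta|=k}c_\beta\pa^{\alpha+\beta}u$ is a combination of order-$s$ derivatives of $u$. The restriction $\cdotH{s}(\RR^n)\to\cdotH{s}(\RR^n_+)$ is continuous because $\|\pa^\alpha u\|_{L^2(\RR^n_+)}\le\|\pa^\alpha u\|_{L^2(\RR^n)}$, and I would prove surjectivity via the higher-order reflection $Eu=u$ on $\{x_n\ge0\}$, $Eu(x',x_n)=\sum_{j=1}^{s}c_j\,u(x',-jx_n)$ for $x_n<0$, with the $c_j$ solving the Vandermonde system $\sum_j c_j(-j)^\ell=1$, $0\le\ell\le s-1$; this makes $Eu\in H^s_{\mathrm{loc}}$, and each order-$s$ derivative of $Eu$ on $\{x_n<0\}$ is, after the change of variables $x_n\mapsto-jx_n$ (a factor $j^{-1/2}$ in $L^2$), dominated by the corresponding derivative of $u$, so $E$ is a bounded right inverse. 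For the trace $\cdotH{s}(\RR^n)\to\cdotH{s-1/2}(\RR^{n-1})$, writing $\widehat{u|_{x_n=0}}(\xi')=c\int_{\RR}\hat u(\xi',\xi_n)\,d\xi_n$ for $u$ Schwartz, Cauchy--Schwarz in $\xi_n$ and the substitution $\xi_n=|\xi'|\tau$ give
\begin{equation*}
 |\xi'|^{2s-1}\bigl|\widehat{u|_{x_n=0}}(\xi')\bigr|^2\le C_s\int_{\RR}(|\xi'|^2+\xi_n^2)^s|\hat u(\xi',\xi_n)|^2\,d\xi_n ,
\end{equation*}
where $s\ge1>\tfrac12$ is used so that $\int_{\RR}(1+\tau^2)^{-s}\,d\tau<\infty$; integrating in $\xi'$ and using density gives continuity. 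For surjectivity I would take $\phi\in\maS(\RR)$ with $c\int\phi=1$ and set $\hat u(\xi',\xi_n)\define\hat v(\xi')|\xi'|^{-1}\phi(\xi_n/|\xi'|)$; the same substitution shows $|u|_{H^s(\RR^n)}^2=\bigl(\int_{\RR}(1+\tau^2)^s|\phi(\tau)|^2\,d\tau\bigr)\,|v|_{H^{s-1/2}(\RR^{n-1})}^2$, and $u|_{x_n=0}=v$ by the choice of $\phi$.

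The main obstacle I anticipate is (iv), for two related reasons. First, one must produce an extension operator $\cdotH{s}(\RR^n_+)\to\cdotH{s}(\RR^n)$ bounded on the \emph{homogeneous} seminorm rather than on the full Sobolev norm; the finite reflection above works precisely because only top-order derivatives enter, but this has to be verified carefully. Second, throughout (iv) (and already in (iii)) one has to keep in mind, as emphasized after the definition, that elements of $\cdotH{s}$ need not lie in $L^2$ and are determined only modulo polynomials of degree $<s$, so the Fourier-side identities are to be read as statements about the function $|\xi|^s\hat u$; since restriction, trace, the map $E$, and the operators $T$ are all local or given by explicit kernels, they descend to well-defined maps at this level, and one only needs to check that the low-frequency behavior (e.g. the local integrability of $\hat v$ near the origin in the trace-surjectivity step) is harmless. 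Everything else is bookkeeping.
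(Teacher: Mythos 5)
Your proof is correct, and parts (i)--(iii) match the paper's (which simply declares them direct Fourier computations). Where you genuinely diverge is in part (iv), specifically in the surjectivity statements. The paper reduces to the classical (inhomogeneous) trace and extension theorems by a dilation argument: it picks a partition of unity invariant under $\alpha_2$, takes a classical right inverse on one dyadic patch, replicates it to all patches by conjugating with $\alpha_2$, and then averages over the compact group $\RR^*_+/2^{\ZZ}$ to produce a fully dilation-invariant right inverse, which is then automatically bounded for the homogeneous seminorms by (i)--(ii). You instead build explicit right inverses from scratch: the Hestenes-type higher-order reflection $Eu(x',x_n)=\sum_j c_j u(x',-jx_n)$ with Vandermonde coefficients for the half-space extension, and the scale-homogeneous Fourier lifting $\hat u(\xi)=\hat v(\xi')|\xi'|^{-1}\phi(\xi_n/|\xi'|)$ for the trace. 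Both constructions are manifestly bounded on the homogeneous seminorms (the reflection because only top-order derivatives enter, the lifting by the substitution $\xi_n=|\xi'|\tau$), so you never need the averaging trick. Your route is more self-contained and makes the scale invariance visible by hand; the paper's route is shorter on the page and transfers verbatim to any setting where a classical (non-homogeneous) right inverse is already known. You are also right to flag, as the one point needing care, that elements of $\cdotH{s}$ are only determined modulo low-degree polynomials and that the low-frequency integrability of $\hat v(\xi')|\xi'|^{-1}\phi(\xi_n/|\xi'|)$ must be checked; the paper glosses over the same point, and for $s\in\NN$ it is indeed harmless since one only ever tests the seminorm against $|\xi|^{s}$ and the trace pairing against Schwartz functions.
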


\begin{proof}
The proof is standard \cite{EvansBook, JostBook, LionsMagenes1,
  Taylor1} and easy. We include a few details for the benefit of the
reader.

(i), (ii), and (iii) are direct calculations based on the definition
and the formulas for the norms in terms of the Fourier
transform. 

(iv) is slightly less trivial, but it is known in the classical case
and our case can either be proved directly following the same method
as in the case of the classical Sobolev spaces or it can be reduced to
the classical Sobolev spaces using (ii). Indeed, for the continuity,
this is immediate, as the restriction maps commute with $\alpha_t$.
For the surjectivity, one has to argue also that there exist right
inverses for the restriction that are $\alpha_t$-invariant. This is
done by choosing a partition of unity that is invariant with respect
to $\alpha_2$ and then choosing a right inverse to the restriction on
one coordinate patch that is continuous for the classical norms. We
replicate this right inverse for all patches using $\alpha_2$. This
yields a continuous right inverse $H^{s-1/2}(\RR^{n-1/2}) \to
H^{s}(\RR^n)$ for the restriction $H^{s}(\RR^n) \to
H^{s-1/2}(\RR^{n-1/2})$ that is, moreover, invariant for
$\alpha_2$. We obtain a fully $\RR^*_+$-invariant inverse by
integrating over the compact group $\RR^*_+/2^{\ZZ}$. This right
inverse will work also for the homogeneous spaces.
\end{proof}
 
Let $D$ be an $N \times N$ matrix of second order differential
operators on $B_r^m(0) \times [0, r)$, and let $C$ be a boundary
  differential operator of order $j$. Assume the coefficients have
  $W^{\ell, \infty}$ smoothness, that is, that $(D, C) \in \maD^{\ell,
    j}(B_r^m(0) \times [0, r); \CC^N)$.  Here $r > 0$, and the case $r
    = \infty$ is not excluded.

\begin{definition} \label{def.SL.reg}
Let $\ell\geq j+1$.  We shall say that $(D, C) \in \maD^{\ell,
  j}(B_r^m(0) \times [0, r); \CC^N)$ satisfies an
  \emph{$\cdotH{\ell+1}$-regularity estimate} on $B_r^m(0) \times [0,
    r)$ if there exists $c_{SL}$ such that for all $\ell\geq k\geq
    j+1$, we have
\begin{equation*}
  | w |_{H^{k+1}(\RR^{m+1}_+)}\ \leq\ c_{SL} \left(|D w|_{H^{k-1}
    (\RR^{m+1}_+)} + |C w|_{H^{k-j+1/2}(\RR^m)}\right),
\end{equation*} 
for all $w$ smooth with compact support in $B_r^m(0) \times [0, r)$,
where we have removed the vector bundle from the notation for the
norms. If $(D,C)$ are obtained from a variational formulation, then we
allow also $\ell = k = j = 1$ and in that case we assume that $\tilde
D w = j_0(f, h)$ and we replace the right hand side with $|f|_{L^{2}
  (\RR^{m+1}_+)} + |h|_{H^{1/2}(\RR^m)}$. This case is, in fact,
crucial in applications, since it is the one obtained using coercivity
to prove well-posedness.
\end{definition}

This definition is very similar to Definition~\ref{def.loc.reg},
except that we consider \emph{semi-norms} instead of norms.  Also, we
only require $w$ to be smooth. However, an operator satisfying the
conditions in Definition~\ref{def.SL.reg}, will satisfy also those of
Definition~\ref{def.loc.reg}. We formulate this result as a lemma, for
further use.

\begin{lemma}\label{lemma.dotHk} 
Assume $(D, C) \in \maD^{\ell, j}(B_r^m(0) \times [0, r); \CC^N)$
  satisfies an $\cdotH{\ell+1}(B_r^m(0) \times [0, r))^N$-regularity
    estimate, then it satisfies an $H^{\ell+1}$-regularity estimate
    for all $\ell\geq j+1$ ($\ell \geq j$ if $C$ is a variational
    boundary condition).  This result extends to uniform conditions.
\end{lemma}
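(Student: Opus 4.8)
The plan is to exploit that, on the half-space $\RR^{m+1}_+$ and for integer order, the inhomogeneous norm and the homogeneous semi-norm of Definition~\ref{def.SL.reg} differ only by lower-order terms. Concretely, for $w$ with compact support in $B_r^m(0)\times[0,r)$ one has $\|w\|_{H^{k+1}}^{2}\asymp\sum_{i=0}^{k+1}|w|_{H^{i}}^{2}$ with constants depending only on $m,k$, together with the trivial bounds $|w|_{H^i}\le\|w\|_{H^k}$ ($i\le k$), $|Dw|_{H^{k-1}}\le\|Dw\|_{H^{k-1}}$, $|Cw|_{H^{k-j+1/2}}\le\|Cw\|_{H^{k-j+1/2}}$. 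So first I would treat the case of $w$ \emph{smooth} with compact support: chaining these with the assumed $\cdotH{k+1}$-estimate gives at once
\[
  \|w\|_{H^{k+1}}\;\le\;C\bigl(|w|_{H^{k+1}}+\|w\|_{H^{k}}\bigr)\;\le\;Cc_{SL}\bigl(\|Dw\|_{H^{k-1}}+\|Cw\|_{H^{k-j+1/2}}\bigr)+C\|w\|_{H^{k}},
\]
i.e.\ the $H^{k+1}$-estimate of Definition~\ref{def.loc.reg}, with $c_{R}$ depending only on $c_{SL},m,k$; the variational case $\ell=k=j$ is identical, with $\|Dw\|_{H^{k-1}},\|Cw\|_{H^{k-j+1/2}}$ replaced by $\|f\|_{L^{2}}=|f|_{L^{2}(\RR^{m+1}_+)}$ and $\|h\|_{H^{1/2}}$.

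Next I would remove the smoothness hypothesis, i.e.\ show that any $w\in H^{k}$ with compact support and with $Dw\in H^{k-1}$, $Cw\in H^{k-j+1/2}$ in fact lies in $H^{k+1}$ and then satisfies that estimate. For the regularity I would run the standard difference-quotient/mollification (Nirenberg) argument, bootstrapping from $k=j+1$ (or $k=j$, variational case) up to $k=\ell$: for $e$ tangent to $\{t=0\}$ the difference quotients $D^{h}_{e}w$ remain compactly supported in $B_r^m(0)\times[0,r)$ for small $h$; their images under $D$ and $C$ stay bounded in $H^{k-2}$, resp.\ $H^{k-1-j+1/2}$, uniformly in $h$ (the commutators with $D$ and $C$ being controlled by the $W^{\ell,\infty}$-bounds on the coefficients), so the regularity estimate already proved for smooth functions, applied at the lower level to mollifications of $D^{h}_{e}w$ and then letting $h\to0$, yields $\pa_{e}w\in H^{k}$; the remaining purely normal derivative $\pa_{t}^{\,k+1}w$ I would recover from $Dw=f$ by solving for $\pa_{t}^{2}w$, which is legitimate because the normal part of the principal symbol of $D$ is invertible --- and that invertibility is itself forced by the $\cdotH{k+1}$-estimate (test it on rapidly oscillating functions). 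Once $w\in H^{k+1}$ is in hand I would approximate by smooth compactly supported $w_{n}\to w$ in $H^{k+1}$, use continuity of $D\colon H^{k+1}\to H^{k-1}$ and of the trace-type operator $C\colon H^{k+1}\to H^{k-j+1/2}$ to get $Dw_{n}\to Dw$, $Cw_{n}\to Cw$, and pass to the limit in the smooth estimate.

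The hard part is this regularity upgrade near the Dirichlet/Neumann face $\{t=0\}$: one must check that the tangential difference quotients interact well with the boundary operator $C$. This is the classical sticking point and works out because of the product form $C_{1}=C_{10}+C_{11}\pa_{\nu}$ with $C_{10}$ tangential (and, in the variational case, the divergence form of $\tilde D$). Finally, the uniform statement requires no extra work: the conclusion $w\in H^{k+1}$ is a per-operator matter and does not affect constants, while the $c_{R}$ produced above depends only on $c_{SL}$ and on $m,k$; so a uniform bound on $c_{SL}$ over a family $S$ --- a uniform $\cdotH{\ell+1}$-regularity estimate --- immediately yields a uniform $H^{\ell+1}$-regularity estimate for $S$.
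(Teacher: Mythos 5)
Your proof is correct and follows essentially the same route as the paper's: the paper's entire argument for smooth compactly supported $w$ is the chain $\|w\|_{H^{k+1}} \le |w|_{H^{k+1}} + \|w\|_{H^{k}} \le c_{SL}\bigl(|Dw|_{H^{k-1}} + |Cw|_{H^{k-j+1/2}}\bigr) + \|w\|_{H^{k}}$ followed by bounding the homogeneous seminorms by the corresponding norms, exactly as in your first display. The paper dispatches the passage from smooth $w$ to general $w\in H^{k}$ in a single sentence (``a continuity and density argument using mollifying functions''), so your detailed difference-quotient discussion is an elaboration of that step rather than a different approach, and your observation that the uniform version is immediate because $c_{R}$ depends only on $c_{SL}$, $m$, $k$ matches the paper's implicit reasoning.
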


\begin{proof} 
Denote $M = B_r^m(0) \times [0, r)$ for the simplicity of the
  notation.  Assume first that $k\geq 2$. Let $w \in \Gamma(M; E)$ be
  smooth with compact support in $M$ such that $D w \in H^{k-1}(M; E)$
  and $C w\in H^{k-j+1/2}(\pa M; E)$. We need to show that $w \in
  H^{k+1}(M; E)$. We have
\begin{align*}
  \|w\|_{H^{k+1}(M; E)}\leq & \ |w|_{H^{k+1}(M; E)} + \|w\|_{H^{k}(M;
    E)}\\
 \leq & \ c_{SL} \left( |D w|_{H^{k-1}} + |C w|_{H^{k-j+1/2}(\pa M;
   E)} \right ) + \| w \|_{H^{k}(M; E)} \\
 \leq & \ c \left(\|D w\|_{H^{k-1}(M; E)} + \| w \|_{H^{k}(M; E)} +
 \|C w\|_{H^{k-j+1/2}(\pa M; E)} \right ).
\end{align*}
This result is extended to $w \in H^{k}(M; E)$ by a continuity and
density argument using mollifying functions. The proof for $k = j =1$
and variational boundary conditions is similar.
\end{proof}

\begin{remark}\label{rem.usual.SL}
Let us assume that $r = \infty$ and consider the map
\begin{equation}\label{eq.def.op.bvp}
  (D, C) \colon \cdotH{k+1}(\RR^{m+1}_+)^N \to
  \cdotH{k-1}(\RR^{m+1}_+)^N \oplus \cdotH{k-j+1/2}(\RR^{m})^N,
\end{equation}
given by $(D, C)(u) = (Du, Cu)$. Then $(D, C)$ satisfies an
$\cdotH{\ell+1}(\RR^{m+1}_+)^N$--regularity estimate if, and only if,
$(D, C)$ is injective with closed range for all $j+1\leq k\leq
\ell$. Equivalently, we have that $(D,C)$ is injective and its minimal
reduced module $\gamma(D, C)> 0$, see Subsection~\ref{ssec.alt}. (In
this case, the number $\gamma(D, C)$ is the least $c_{SL}$ in
Definition~\ref{def.SL.reg}.)  In case $D$ and $C$ have constant
coefficients, this is similar to the Shapiro-Lopatinski condition.
\end{remark}

\subsection{A global Shapiro-Lopatinski regularity condition}
Let us assume first that we are on a Euclidean space and that $D =
\sum_{|\alpha|\leq 2} a_{\alpha} \pa^\alpha$, with $a_{\alpha}$
smooth, matrix valued functions. Recall that $j$ is the order of the
boundary conditions and that we assume for simplicity, that only one
of the vector bundles $F_0$ and $F_1$ (from $E \vert_{\pa M} = F_0
\oplus F_1$) is non-zero. Thus, according to our notational
convention, if $j = 0$, $C= C_{0}$ is a smooth, matrix valued function
on $B_r^m(0)$ and, if $j = 1$, $C = C_1 = C_{10} + C_{11} \pa_{\nu}$,
with $C_{10} = \sum_{|\alpha|\leq 1} c_{\alpha} \pa^\alpha$ a
\emph{first order} differential operator on $B_r^m(0)$ and $C_{11}$ a
smooth, matrix valued function on $B_r^m(0)$. Here $\pa_{\nu} = -
\pa_n$ is the outward pointing normal derivative, where $\pa_n$ is the
partial derivative with respect to the last variable. We denote by
$(D^{(0)}, C^{(0)})$ the \emph{principal part} of $(D, C)$ with
\emph{coefficients frozen at} $0$, that is,
\begin{align}\label{eq.def.principal.part}
  D^{(0)} & \define \ \sum_{|\alpha| = 2} a_{\alpha}(0) \pa^\alpha\\
  C_0^{(0)} & \define C_0(0), \ \mbox{ if } j = 0, \\
  C_1^{(0)} & \define \ \ \sum_{|\alpha| = 1} c_{\alpha}(0) \pa^\alpha
  + C_0(0) \pa_{\nu}, \ \mbox{ if } j = 1.
\end{align}
Thus $(D^{(0)}, C^{(0)})$ is a homogeneous, constant coefficient
boundary value problem on $\RR^{m+1}_+$. In case both $F_0$ and $F_1$
are non-zero, we let $C^{(0)} = (C_0^{(0)}, C_1^{(0)})$.

Motivated by Remark \ref{rem.usual.SL}, we introduce the following
definition.

\begin{definition} \label{def.SL.reg0}
We shall say that $(D, C) \in \maD^{\ell, j}(B_r^m(0) \times [0, r);
  \CC^N)$ satisfies the \emph{$H^{\ell+1}$-Shapiro-Lopatinski
    regularity condition at $0$} if $(D^{(0)}, C^{(0)})$ satisfies an
  $\cdotH{\ell+1}(\RR^{m+1}_+)^N$-regularity estimate.
\end{definition}

Let us turn now to the case of a manifold with boundary. As
  noticed already, all the needed definitions and concepts
  (homogeneous Sobolev spaces $\cdotH{\ell + 1}(V; E)$ and
  $\cdotH{\ell + 1}(V_+; E)$, regularity conditions, ... ) extend to a
  vector space (respectively, half-vector space) endowed with a
  metric. For instance, we define the principal part with coefficients
  frozen at a boundary point as follows.

\begin{notation}\label{not.princ}
Let $T_x^+M$ be the half-space of $T_xM$ corresponding to the inward
pointing vectors at $x \in \pa M$.  Let $(D_x, C_x)$ be the induced
operator (defined only a neighborhood of 0) on $T_x^+M$.  The
\emph{principal part} $(D^{(0)}_x, C^{(0)}_x)$ of $(D_x, C_x)$ with
\emph{coefficients frozen at} $x$ will then be a matrix of constant
coefficient differential operators on $T_x^+M$.
\end{notation}

Most importantly, the above definition (Definition~\ref{def.SL.reg0})
generalizes to $(D, C) \in \maD^{\ell, j}(M; E)$ and any point $x \in
\pa M$.

\begin{definition} \label{def.SL.reg.x}
We shall say that $(D, C) \in \maD^{\ell, j}(M; E)$ satisfies the
\emph{$H^{\ell+1}$-Shapiro-Lopatinski regularity condition at $x \in
  \pa M$} if $(D^{(0)}_x, C^{(0)}_x)$ satisfies the
$H^{\ell+1}$-Shapiro-Lopatinski regularity condition at 0 on $T_x^+M$.
\end{definition}

The above condition is closely related to the condition of
``regularity upon freezing the coefficients'' introduced in
\cite[Equation (11.30)]{Taylor1} and used, for instance, in
\cite{elasticity}. We are ready now to globalize the
Shapiro-Lopatinski regularity condition.
 
\begin{definition} \label{def.SL.reg.M}
We shall say that $(D, C) \in \maD^{\ell, j}(M; E)$ satisfies a
\emph{uniform $H^{\ell+1}$-Shapiro-Lopatinski regularity condition (at
  $\pa M$)} if it satisfies the $H^{\ell+1}$-Shapiro-Lopatinski
regularity condition at $x$ for each $x \in \pa M$ and the constant
$c_{SL}$ of Definition~\ref{def.SL.reg} can be chosen independently of
$x \in \pa M$.
\end{definition}

In particular, the constant $c_{SL}$ depends and scales with the
  metric; see Equation \eqref{eq.conf.inv} for the precise dependence
  on the metric. We thus see that $(D, C) \in \maD^{\ell, j}(M; E)$
satisfies the uniform, $H^{\ell+1}$-Shapiro-Lopatinski regularity
condition (at $\pa M$) if there exists $c_{SL} > 0$ such that, for all
$x \in \pa M$ and all $1 \leq k \leq \ell$, we have
\begin{equation}\label{eq.def.SL}
  | w |_{H^{k+1}(T_x^+M)} \leq c_{SL} \left(|D^{(0)}_x
  w|_{H^{k-1}(T_x^+M)} + |C^{(0)}_x w|_{H^{k-j+1/2}(T_x \pa
    M)}\right).
\end{equation} 
 
We now apply these notions to a manifold $M$ with boundary and bounded
geometry.  For any $x \in M$, we denote by $(D_x, C_x)$ (or simply by
$D_x$, if $x \notin \pa M$) the operators (respectively,
operator) on $B_r^m(0) \times [0, r)$ (respectively, on $B_r^m(0)$)
  induced by $(D, C)$ (respectively, by $D$) in Fermi
  coordinates around $x$. We let $\maF_b = \{ (D_x, C_x) \, \vert \ x
  \in \pa M\}$ and $\maF_i = \{ D_x \, \vert \ \dist(x, \pa M)\geq
  r\}$, as in \eqref{eq.def.F}. We have the following theorem.

\begin{theorem}\label{thm.SL} 
Assume that $M$ is a manifold with boundary and bounded geometry and
that $E \to M$ is a vector bundle with bounded geometry.  Let $(D, C)
\in \maD^{\ell+1, j}(M; E)$.  The following are equivalent.
\begin{enumerate}[(i)]
 \item $(D, C)$ satisfies an $H^{\ell+1}$-regularity estimate on $M$.
 \item The family $\maF_b \cup \maF_i = \{ (D_x, C_x) \, \vert \ x \in
   \pa M\} \cup \{ D_x \, \vert \ \dist(x, \pa M)\geq r\}$ satisfies a
   uniform $H^{\ell+1}$-regularity estimate.
 \item $D$ is uniformly elliptic on $M$ and $(D, C)$ satisfies a
   uniform $H^{\ell+1}$-Shapiro-Lopatinski regularity condition (at
   $\pa M$).
 \item $D$ is uniformly elliptic and $(D, C)$ satisfies a uniform
   $H^2$-Shapiro-Lopatinski regularity condition (at $\pa M$).
\end{enumerate}
If $(D, C)$ are obtained from a variational formulation, then the
above conditions are equivalent also to
\begin{enumerate}
 \item[(v)] $D$ is uniformly elliptic and $(D, C)$ satisfies a
   uniform $H^1\!$-Shapiro-Lopatinski regularity condition (at $\pa
   M$).
\end{enumerate}
\end{theorem}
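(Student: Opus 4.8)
The plan is to prove the equivalences by connecting them to the local-families machinery already built up in Sections~\ref{sec.u.est} and~\ref{sec.SL}. The backbone is Theorem~\ref{thm.reg0}, which already establishes $(i)\Leftrightarrow(ii)$: the boundary value problem $(D,C)$ satisfies an $H^{\ell+1}$-regularity estimate on $M$ if and only if the local families $\maF_b$ and $\maF_i$ satisfy a uniform $H^{\ell+1}$-regularity estimate on the model patches. So the real work is to show $(ii)\Leftrightarrow(iii)$, and then that the chain of progressively weaker Shapiro--Lopatinski conditions $(iii)\Rightarrow(iv)\Rightarrow(v)$ can be run backwards, i.e. that even the $H^1$-condition (in the variational case) or the $H^2$-condition already forces the full $H^{\ell+1}$-version. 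The trivial implications $(iii)\Rightarrow(iv)\Rightarrow(v)$ are immediate since an $H^{\ell+1}$-Shapiro--Lopatinski condition entails the $H^k$-one for every $1\le k\le\ell+1$ by definition.

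For $(ii)\Rightarrow(iii)$, I would first extract uniform ellipticity of $D$ from the uniform interior regularity of $\maF_i$: a uniform $H^{\ell+1}$-regularity estimate for $\{D_x\}$ on the balls $B^{m+1}_r(0)$ forces the principal symbols $a_x(\xi,\xi)$ to be uniformly invertible, since otherwise one could build, on a fixed patch, a sequence of highly oscillatory approximate null solutions (plane waves $e^{it\langle x,\xi\rangle}\chi$ modulated by a fixed cutoff) violating the estimate with a constant blowing up; here the higher regularity $W^{\ell+1,\infty}$ of the coefficients assumed in $\maD^{\ell+1,j}$ together with Proposition~\ref{prop.limit.reg} lets us freeze coefficients in the limit. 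Then, to get the uniform Shapiro--Lopatinski condition, I would freeze coefficients at each boundary point $x$: using the dilation properties of the homogeneous spaces (Lemma~\ref{lemma.classical}, parts (i) and (ii)), an $H^{k+1}$-regularity estimate for $(D_x,C_x)$ on the patch $B^m_r(0)\times[0,r)$ rescales, under $\alpha_t$ with $t\to\infty$, to an $\cdotH{k+1}$-regularity estimate for the frozen principal part $(D_x^{(0)},C_x^{(0)})$ on the half-space $T_x^+M$, and the constant $c_{SL}$ is controlled by $c_R$ uniformly in $x$ since the lower-order and variable-coefficient terms scale away. This is essentially the content already used implicitly around Remark~\ref{rem.usual.SL}, and it is the standard ``Shapiro--Lopatinski from regularity'' direction, made uniform.

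The converse direction $(v)\Rightarrow(ii)$ (which also covers $(iv)\Rightarrow(ii)$ and $(iii)\Rightarrow(ii)$) is the crux and the main obstacle. Here I would argue that a uniform $H^1$- (or $H^2$-) Shapiro--Lopatinski estimate on the frozen half-space problems, combined with uniform ellipticity, upgrades to a uniform $H^{\ell+1}$-regularity estimate for the variable-coefficient local problems $\maF_b\cup\maF_i$. The mechanism is a uniform version of the classical bootstrap: differentiate the equation tangentially to the boundary, use the Shapiro--Lopatinski estimate for the tangential difference quotients (which solve the same frozen problem up to controlled errors), and then recover the normal derivatives algebraically from uniform ellipticity of the top-order symbol, exactly as in the proof of Lemma~\ref{lemma.euclidean} but tracking that every constant depends only on $c_{SL}$, $c_e$, and the $W^{\ell+1,\infty}$-bounds on the coefficients. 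The variable coefficients are handled by Lemma~\ref{lemma.commutator} and by freezing-and-perturbing: on a patch of radius $r$, $\|(D-D^{(0)}_x)w\|\lesssim r\,\|w\|_{H^{k+1}}$ for $w$ supported in the patch, so choosing $r$ small (uniformly, which is legitimate because of bounded geometry) absorbs the perturbation. One subtle point I expect to have to address carefully is the passage $(iv)\Rightarrow(iii)$, i.e. that the $H^2$-version implies all higher $H^{k+1}$-versions: this is not automatic for the \emph{frozen} problems on the half-space in general, but for elliptic constant-coefficient systems with constant-coefficient boundary operators it is classical that injectivity-with-closed-range in $\cdotH2$ propagates to all $\cdotH{k+1}$ by the same tangential-differentiation/elliptic-recovery argument on $\RR^{m+1}_+$, with the reduced modulus $\gamma$ comparable across scales because of homogeneity. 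Once $(iv)\Rightarrow(iii)$ and $(v)\Rightarrow(iii)$ (the latter via the variational case of Lemma~\ref{lemma.dotHk}) are in hand, Theorem~\ref{thm.reg0} closes the loop, and the proof is complete.
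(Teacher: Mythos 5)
Your proposal is correct and follows essentially the same route as the paper: $(i)\Leftrightarrow(ii)$ via Theorem~\ref{thm.reg0}, $(ii)\Rightarrow(iii)$ by the dilation/rescaling argument with Lemma~\ref{lemma.classical} plus an oscillatory-solution argument for uniform ellipticity, the equivalence of (iii)--(v) by the classical tangential-differentiation (divided-difference) bootstrap for the frozen half-space problems, and the closing implication by the partition-of-unity argument with coefficients frozen at the patch centers and the perturbation $\|(D-D_x^{(0)})w\|\lesssim C(r)\|w\|_{H^{k+1}}$ absorbed for $r$ small. The only cosmetic difference is that you phrase the final step as $(v)\Rightarrow(ii)$ rather than the paper's $(iii)\Rightarrow(i)$, which amounts to the same argument.
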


Note that usually we assume $(D, C) \in \maD^{\ell, j}(M; E)$, whereas
here we assume $(D, C) \in \maD^{\ell+1, j}(M; E)$. This is needed for
the proof of (iii) $\Rightarrow$ (i).

\begin{proof} 
The implication (i) $\Rightarrow$ (ii) follows from the definitions by
localization. (This is the converse of Theorem~\ref{thm.reg0}.)
 
To obtain (ii) $\Rightarrow$ (iii) for each $x \in \pa M$, we consider
$\xi$ with compact support on $T_x^+M$ and $\xi_\epsilon(v) =
\xi(\epsilon^{-1}v)$. Using a chart around $x$ with Fermi coordinates
as in Subsection~\ref{ssec.cov} and using the corresponding bounds of
the transition function, the uniform $H^{\ell+1}$-regularity estimate
for each $\epsilon > 0$ from Definition~\ref{def.loc.reg} implies
\begin{align*}
  \epsilon^{k+1-n/2}& \| \xi_\epsilon \|_{H^{k+1}(T_x^+M; E)}\leq
  \epsilon^{k+1-n/2} \bar{c}_R \left( \| \xi_\epsilon
  \|_{H^{k}(T_x^+M; E)} \right . \\
  & \ \left . + \epsilon^{k+1-n/2} \|D_x \xi_\epsilon
  \|_{H^{k-1}(T_x^+M; E)} + \|C_x \xi_\epsilon\|_{H^{k-j+1/2}(\pa
    T_x^+ M; E)}\right)
\end{align*} 
for some $\bar{c}_R$ independent on $x$.  Passing to the limit as
$\epsilon \to 0$ and using Lemma \ref{lemma.classical}(ii) we obtain
\begin{equation*}
  | \xi |_{H^{k+1}(T_x^+M; E)}\ \leq\ \bar{c}_R \left(|D_x^{(0)} \xi
  |_{H^{k-1}(T_x^+M; E)} + |C_x^{(0)} \xi |_{H^{k-j+1/2}(\pa T_x^+ M;
  |E)}\right).
\end{equation*}
This gives right away that $(D, C)$ satisfies a uniform,
$H^{k+1}$-Shapiro-Lopatinski regularity condition (at $\pa M$).  The
same argument combined with a Fourier transform at an arbitrary
interior point (and $ 0< r < r_{FC}$ arbitrary) and a perturbation
argument \cite{Hormander3, Taylor1} gives that $D$ is uniformly
elliptic on $M \smallsetminus \pa M$. Hence $D$ is uniformly elliptic
on $M$.
 
The implications (iii) $\Rightarrow$ (iv) $\Rightarrow$ (v) are
trivial. We have that, in fact, they are equivalent. This is seen in
the same way in which one proves higher regularity for boundary value
problems using divided differences. See any textbook, in particular
\cite{EvansBook, JostBook, LionsMagenes1, Taylor1}.
 
To complete the proof, it is enough then to show that (iii)
$\Rightarrow$ (i).  We want to estimate $\|u\|_{H^{k+1}}$ in terms of
$\|Du\|_{H^{k-1}}$, $\|Cu\|_{H^{k-j-1/2}}$ and $\|u\|_{H^{k}}$.  This
is done using an $r$-partition of unity $\phi_\gamma$ as in
Definition~\ref{def_part} and following then almost word for word the
proof of Theorem~\ref{thm.reg0}, but choosing $r >$ small enough. Let
us use the notation of the proof of that theorem. Then the only
difference in the estimate of the proof is that we need to replace
$P_\gamma$ and $C_\gamma$ with their principal parts $P_\gamma^{(0)}$
and $C_\gamma^{(0)}$ and with coefficients frozen at $p_\gamma$. We
note that the family $(P_\gamma^{(0)}, C_\gamma^{(0)})$ satisfies a
uniform $H^{\ell+1}$-regularity estimate, in view of Lemma
\ref{lemma.dotHk}. The lower order terms can be absorbed into the
weaker norm $\|u\|_{H^k}$. We then use the fact that $\|(P -
P_\gamma)u\|_{H^{k-1}}\leq C(r) \|u\|_{H^{k+1}}$, for $u$ with support
in the ball of radius $r$ centered at $\gamma$ and with $C(r)$
independent of $\gamma$ and with $C(r) \to 0$ as $r \to 0$. We have
$C(r) \to 0$ when $r \to 0$ since $P$ has coefficients in $W^{\ell+1,
  \infty}$.  To obtain regularity estimates in the interior (away from
the boundary), we use the uniform ellipticity of the operator.  For
more details, one can consult also Proposition 11.2 of \cite{Taylor1},
which is a similar result with a similar proof.
\end{proof}

In particular, the sequence (ii) $\Rightarrow$ (iii) $\Rightarrow$ (i)
gives a new proof of Theorem~\ref{thm.reg0}. We obtain the following
consequence.

\begin{corollary}
Let $(D, C) \in \maD^{\ell, j}(M; E)$.  If $(D, C)$ satisfies an order
$H^2$-regularity estimate on $M$, then $(D, C)$ satisfies an
$H^{\ell+1}$-regularity estimate on $M$.
\end{corollary}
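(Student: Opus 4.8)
The statement is essentially a corollary of the equivalences in Theorem~\ref{thm.SL}, the point being that condition~(iv) there --- \emph{$D$ uniformly elliptic and $(D,C)$ satisfies a uniform $H^2$-Shapiro--Lopatinski regularity condition} --- does not refer to the target regularity order, whereas condition~(i) does. First I would dispose of the case $\ell=1$, in which the asserted $H^{\ell+1}$-estimate \emph{is} the hypothesised $H^2$-estimate and there is nothing to prove; so assume $\ell\ge 2$. Then $(D,C)\in\maD^{\ell,j}(M;E)\subset\maD^{2,j}(M;E)$, so Theorem~\ref{thm.SL} applies with its level parameter equal to~$1$, and the hypothesised $H^2$-regularity estimate --- which is condition~(i) at that level --- is equivalent to the statement that $D$ is uniformly elliptic and $(D,C)$ satisfies a uniform $H^2$-Shapiro--Lopatinski regularity condition.

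Next I would upgrade this to condition~(iii) of Theorem~\ref{thm.SL} at level~$\ell$, i.e.\ to $D$ being uniformly elliptic and $(D,C)$ satisfying a uniform \emph{$H^{\ell+1}$}-Shapiro--Lopatinski regularity condition. This is the implication (iv)$\Rightarrow$(iii) of Theorem~\ref{thm.SL}, whose proof is insensitive to the regularity of the coefficients of $(D,C)$ because it only concerns the \emph{constant-coefficient}, homogeneous model problems $(D_x^{(0)},C_x^{(0)})$ on $T_x^+M$: for these the standard higher-regularity argument (cf.\ the proof of Theorem~\ref{thm.SL}) self-improves an $\cdotH{2}$-regularity estimate to an $\cdotH{\ell+1}$-regularity estimate, with a constant controlled by the order-$2$ constant $c_{SL}$, the uniform ellipticity constant, and the uniform $L^\infty$-bounds on the coefficients of $D$ and $C$. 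Uniformity in $x\in\pa M$ then follows from a compactness argument in the spirit of Proposition~\ref{prop.unif.reg}, applied to the precompact family of frozen model problems.

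Finally I would invoke the implication (iii)$\Rightarrow$(i) of Theorem~\ref{thm.SL} to deduce the desired $H^{\ell+1}$-regularity estimate. \textbf{The main obstacle} lies here: Theorem~\ref{thm.SL} establishes (iii)$\Rightarrow$(i) only for $(D,C)\in\maD^{\ell+1,j}(M;E)$, one derivative more than we are given. I would close this gap by re-running the localization argument of that proof --- $r$-uniform partition of unity, then freezing the principal part at each $p_\gamma$ --- and verifying that $W^{\ell,\infty}$-coefficients already suffice. In that argument the extra derivative was used only to make $\|(D-D_\gamma^{(0)})u\|_{H^{k-1}}$ and $\|(C-C_\gamma^{(0)})u\|_{H^{k-j+1/2}(\pa M)}$ bounded by $C(r)\|u\|_{H^{k+1}}$ with $C(r)\to 0$ on a patch of radius~$r$; I would reorganize these differences so that the genuinely $O(r)$ factor is $\|a_\alpha(\cdot)-a_\alpha(p_\gamma)\|_{L^\infty(B_r)}$ (for which $W^{1,\infty}$ is enough), while every remaining contribution carries a derivative of a coefficient and hence, by tame/commutator estimates --- and, for the boundary term, the trace theorem --- falls onto the weaker norm $\|u\|_{H^{k}}$ that already appears on the right-hand side of the regularity estimate. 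Equivalently, one can first reduce to the case of totally bounded coefficients by a careful approximation. With this bookkeeping in place, Theorem~\ref{thm.SL} yields the $H^{\ell+1}$-regularity estimate, and the corollary follows.
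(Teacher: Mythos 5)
Your proposal is correct and follows essentially the route the paper intends: the corollary is presented as an immediate consequence of the chain (i) $\Rightarrow$ (iv) $\Rightarrow$ (iii) $\Rightarrow$ (i) in Theorem~\ref{thm.SL} (applied first at level $1$, then at level $\ell$), and the paper supplies no further argument. You have, moreover, correctly spotted a discrepancy the paper glosses over --- the corollary assumes only $(D,C)\in\maD^{\ell,j}(M;E)$ while Theorem~\ref{thm.SL} is stated for $(D,C)\in\maD^{\ell+1,j}(M;E)$, the extra derivative being used precisely in (iii) $\Rightarrow$ (i) to obtain $C(r)\to 0$ --- and your repair is sound: only the term in which no derivative falls on a coefficient needs the smallness of $\|a_\alpha-a_\alpha(p_\gamma)\|_{L^\infty(B_r)}$, for which Lipschitz (hence $W^{1,\infty}$, available since $\ell\geq j+1\geq 1$) suffices, while every term carrying at least one derivative of a coefficient is controlled by $\|u\|_{H^{k}}$ using only $W^{k-1,\infty}\subset W^{\ell,\infty}$ bounds.
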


It would be interesting to investigate the relation between the
results of this paper and those of Karsten Bohlen \cite{KarstenCR,
  KarstenBMB}.

\subsection{A uniform Agmon condition} In view of the results on
the uniform Shapiro-Lopatinski conditions and of the usefulness
  of positivity apparent in the next section, we now consider the
coercivity of our operators, in the same spirit as the uniform
Shapiro-Lopatinski conditions. The notation and the approach are very
similar.  We keep the notation of Section~\ref{sec.variational}. In
particular, $\tilde P$ will be a second order differential operator in
divergence form with associated boundary conditions $C = (C_0, C_1)$,
as in Subsection \ref{ssec.hr}. \emph{From now on, $M$ will be a
  manifold with boundary and bounded geometry.}

Recall the following standard terminology.

\begin{definition}\label{def.s.coercive}
A sesquilinear form $a$ on a hermitian vector bundle $V \to X$ is
\emph{called strongly} coercive (or \emph{strictly positive}) if there
is some $c>0$ such that $\Re a(\xi,\xi)\geq c|\xi|^2$ for all $x\in X$
and $\xi \in V_x$. If the sesquilinear form $a$ on $T^*M \otimes E$
used to define $P$ is strongly coercive, then $P$ is said to satisfy
the \emph{strong Legendre condition}.
\end{definition}

Let $V \subset H$ be a continuous inclusion of Hilbert spaces (with
non-closed image, in general). Let $V^*$ be the complex conjugate of
$V$ with pairing $V \times V^* \to \CC$ restricting to the scalar
product of $H$ on $V \times H$. Recall that an operator $T \colon V
\to V^*$ is \emph{coercive} on $V$ if it satisfies the G{\aa}rding
inequality, that is, if there exist $\gamma > 0$ and $R \in \RR$ such
that, for all $u\in V$,
\begin{equation}\label{eq.Gaarding}
  \Re \<T u, u\> \geq \gamma \|u\|_{V}^2 - R \|u\|^2_{H}.
\end{equation}
Then $T + \lambda$ is strongly coercive for $\Re(\lambda) > R$ (see
Definition~\ref{def.s.coercive}), and hence it satisfies the
conditions of the Lax-Milgram lemma.  Therefore, it satisfies
regularity in view of the results of the next section. Coercive
operators on \emph{bounded} domains were characterized by Agmon in
\cite{AgmonCoercive} as strongly elliptic operators satisfying
suitable conditions at the boundary (which we shall call the ``Agmon
condition.''). We shall need a \emph{uniform} version of this
condition, to account for the non-compactness of the boundary.

Let now $\tilde P \tilde V \to V^*$ be as in Equation
\eqref{eq.def.tildeP} (so it is associated to the sesquilinear form
$B$ and has principal symbol $a$). Let $C$ be the boundary conditions
associated to $\tilde P$. That is, $C = (e_0, e_1 \pa_{\nu}^{a} +
Q)$. We let $e_{0x}$ and $e_{1x}$ be the values at $x$ of the
endomorphisms $e_0$ and $e_1$. Similarly, let $Q_x^{(0)}$ be the
principal part of $Q$ with coefficients frozen at $0$, where
  $Q_x$ is regarded as a first order differential operator (so
  $Q_x^{(0)}= 0$ if $Q$ is of order zero. Let $P^{(0)}_x$ be the
principal part of the operator $P$ and $C_x^{(0)} = (e_{0x}, e_{1x}
\partial_\nu^a + Q_x^{(0)})$ be the principal part of the boundary
conditions $C$ with coefficients frozen at some $x \in \pa M$, as in
Notation \ref{not.princ}. Let $B_x^{(0)}$ be the associated Dirichlet
bilinear form to $P^{(0)}_x$ equipped with the above boundary
conditions (again with coefficients frozen at $x$), that is
\begin{equation}\label{def.Bx0}
  B_x^{(0)}(u, v) \define \int_{T_x^+} a_x^{(0)}(du, dv) + \int_{T_x
    \pa M} ( Q_x^{(0)} u, v).
\end{equation}
This defines a continuous sesquilinear form on
\begin{equation*}
  V_{x} \define \{ u \in H^1(T_x^+M; E_x) \, \vert \ u\vert_{T_x \pa
    M} \in (F_{1})_x \}.
\end{equation*}
The associated operator in divergence form $\tilde P^{(0)}_x$ will be
called \emph{the principal part of $\tilde P$ with coefficients frozen
  at $x$.}

\begin{definition}
We say that $\tilde P$ satisfies the \emph{uniform Agmon condition (on
  $\pa M$)} if it is uniformly strongly elliptic and if there exists
$C > 0$ such
\begin{equation*}
  \<\tilde P^{(0)}_x u, u\> = B_x^{(0)}(u, u) \geq C |u|_{H^1}^2,
\end{equation*}
for all $x \in \pa M$ and all $u \in \CIc(T_x^+M)$ with $e_{0x} u = 0$
on the boundary of $T_x^+M$.
\end{definition}

We have then the following result that is proved, \emph{mutatis
  mutandis}, as the regularity result of Theorem~\ref{thm.SL}.

\begin{theorem}\label{thm.Agmon}
Let $M$ be a manifold with boundary and bounded geometry, $E \to M$ be
a vector bundle with bounded geometry, and $\tilde P$ be a second
order differential operator in divergence form, as above. We assume
$\tilde P$ has coefficients in $W^{1,\infty}$.  We have that $\tilde
P$ is coercive on $V \define \{ u \in H^1(M; E) \, | \ e_0 u = 0
\mbox{ on } \pa M\} $ if, and only if, $\tilde P$ is uniformly
strongly elliptic and it satisfies the uniform Agmon condition on $\pa
M$.
\end{theorem}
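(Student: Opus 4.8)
The plan is to follow, \emph{mutatis mutandis}, the proof of Theorem~\ref{thm.SL}, replacing throughout the $H^{\ell+1}$-regularity estimates by the G{\aa}rding inequality \eqref{eq.Gaarding} and the Shapiro-Lopatinski model problems by the Agmon model forms $B^{(0)}_x$ of \eqref{def.Bx0}. Concretely, coercivity of $\tilde P$ on $V$ is, by definition, the inequality $\Re\langle\tilde P u,u\rangle \geq \gamma\|u\|_V^2 - R\|u\|_{L^2(M;E)}^2$ for all $u\in V$, and what has to be shown is that this is equivalent to the conjunction of uniform strong ellipticity of $a$ with the uniform bound $\Re B^{(0)}_x(u,u)\geq C|u|_{H^1}^2$ on the half-spaces $T_x^+M$, the constant $C$ being independent of $x\in\pa M$.

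For the implication ``coercive $\Rightarrow$ uniformly strongly elliptic $+$ uniform Agmon condition'', I would localize by a dilation argument exactly as in the step (ii)$\Rightarrow$(iii) of Theorem~\ref{thm.SL}. Fix $x\in\pa M$, work in a Fermi coordinate chart around $x$ with an associated synchronous trivialization, pick $\xi\in\CIc(T_x^+M;E_x)$ with $e_{0x}\xi=0$ on $T_x\pa M$, and set $\xi_\epsilon(v)\define\xi(v/\epsilon)$, transported to the chart. Since $e_0$ varies along $\pa M$ only by $O(\epsilon)$ on the support of $\xi_\epsilon$, the section $\xi_\epsilon$ does not quite lie in $V$; by the surjectivity of the trace with a bounded right inverse (Theorem~\ref{thm.trace}) one subtracts a correction $\psi_\epsilon$ with $\|\psi_\epsilon\|_{H^1}=O(\epsilon)\,|\xi_\epsilon|_{H^1}$ so that $\tilde\xi_\epsilon\define\xi_\epsilon-\psi_\epsilon\in V$. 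Applying \eqref{eq.Gaarding} to $\tilde\xi_\epsilon$, multiplying by $\epsilon^{2-\dim M}$, and letting $\epsilon\to0$: the metric and all coefficients freeze at $x$, while the zero-order terms, the operator $Q_1$ and the remainder $R\|\cdot\|_{L^2}^2$ are of strictly lower order in $\epsilon$ and drop out (using $\|v\|_{L^2(\pa M)}^2\le\delta\|v\|_{H^1(M)}^2+C_\delta\|v\|_{L^2(M)}^2$ for the zero-order boundary part and Lemma~\ref{lemma.classical} for the scaling), whereas $B_a$ together with the first-order boundary term coming from $Q$ converge to $B^{(0)}_x$ and $\|\tilde\xi_\epsilon\|_{V}^2$ converges to $|\xi|_{H^1(T_x^+M)}^2$. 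This gives $\Re B^{(0)}_x(\xi,\xi)\ge\gamma|\xi|_{H^1}^2$ with $\gamma$ the G{\aa}rding constant, hence $x$-independent: the uniform Agmon condition. Running the same dilation at interior points and Fourier-transforming, as in Theorem~\ref{thm.SL}, and then using continuity of $a$ (coefficients in $W^{1,\infty}\subset C^0$) to pass to the boundary, yields uniform strong ellipticity.

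For the converse, I would localize the quadratic form by a partition of unity $\{\phi_\gamma\}$ subordinate to an $r$-covering with $\pa_{\nu}\phi_\gamma=0$ at $\pa M$ and $\sum\phi_\gamma^2=1$, using a localization (IMS-type) identity for the form rather than the norm localization of Theorem~\ref{thm.reg0}: one has $\Re B(u,u)=\sum_\gamma\Re B(\phi_\gamma u,\phi_\gamma u)-\sum_\gamma\int_M a(u\,d\phi_\gamma,u\,d\phi_\gamma)\dvol_g$, and the last sum is bounded by $C\|u\|_{L^2}^2$ since the $\phi_\gamma$ are uniformly bounded in $W^{1,\infty}$ and uniformly locally finite. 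For interior $\gamma$, uniform strong ellipticity gives $\Re B(v,v)\ge c_a|v|_{H^1}^2-C\|v\|_{L^2}^2$ directly; for boundary $\gamma$ one compares $B$ on the chart with the frozen form $B^{(0)}_{p_\gamma}$, the difference being controlled by $(\delta+Cr)|v|_{H^1}^2+C_\delta\|v\|_{L^2}^2$ (the $Cr$ from freezing the first-order coefficients of $a$ and of $Q$ on a ball of radius $r$, the $\delta$-term from the zero-order terms and from $Q_1$, again via the $L^2$-trace inequality above; a further $O(r)$ correction places $\phi_\gamma u$ in the frozen space attached to $p_\gamma$), after which the uniform Agmon condition gives $\Re B^{(0)}_{p_\gamma}(\phi_\gamma u,\phi_\gamma u)\ge C|\phi_\gamma u|_{H^1}^2$. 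Choosing $r$ and $\delta$ small, summing over $\gamma$, and invoking Proposition~\ref{prop.part.unit} to pass from $\sum_\gamma|\phi_\gamma u|_{H^1}^2$ back to $\|u\|_V^2$ (modulo $C\|u\|_{L^2}^2$, using $\sum_\gamma\|\phi_\gamma u\|_{L^2}^2=\|u\|_{L^2}^2$), one recovers \eqref{eq.Gaarding}.

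The main obstacle I expect is the bookkeeping for the boundary term $\int_{\pa M}(Qu,u)\,d\vol_{\pa g}$: as $Q$ is genuinely first order it cannot be absorbed into the lower-order remainder, so it must be tracked carefully under dilation (where it survives into $B^{(0)}_x$) and under localization and freezing (where only its \emph{zero}-order and $O(r)$ parts may be discarded), relying on the $L^2$-trace inequality $\|v\|_{L^2(\pa M)}^2\le\delta\|v\|_{H^1(M)}^2+C_\delta\|v\|_{L^2(M)}^2$ rather than on the false small-constant $H^{1/2}$-trace inequality. The secondary nuisance is that the rescaled, respectively localized, test sections lie only approximately in the relevant space because the splitting $E\vert_{\pa M}=F_0\oplus F_1$ varies along $\pa M$; this is repaired by the $O(\epsilon)$, respectively $O(r)$, trace corrections described above, which are negligible in the limits taken.
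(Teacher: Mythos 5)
Your proposal is correct and follows exactly the route the paper itself prescribes: the published proof consists of the single remark that the argument is \emph{mutatis mutandis} that of Theorem~\ref{thm.SL}, equivalence (i) $\Leftrightarrow$ (iii) --- i.e.\ the dilation/freezing argument for necessity and the partition-of-unity localization with frozen coefficients for sufficiency --- which is precisely what you carry out, and in considerably more detail (the $V$-membership corrections and the careful tracking of the first-order boundary operator $Q$) than the paper provides. The only quantitative quibble is that the frozen-minus-unfrozen first-order boundary term is $O(r^{1/2})\,\|v\|_{H^{1}}^2$ rather than $O(r)$ (one bounds the multiplication operator by the coefficient difference on $H^{1/2}(\pa M)$ by interpolating between $L^2(\pa M)$, where its norm is $O(r)$, and $H^1(\pa M)$, where it is only $O(1)$), which is harmless since all that is needed is smallness as $r \to 0$.
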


The proof is essentially the same as that of Theorem~\ref{thm.SL},
more precisely, of the equivalence (i) $\Leftrightarrow$ (iii). We
need at least $W^{1,\infty}$ to make the partition of unity argument
work.

\begin{remark}\label{rem.max.reg}
As is well-known, coercivity estimates lead to solutions of evolution
equations \cite{AmannParab, AmannCauchy, LionsMagenes1, PazyBook}. Let
$H_0^1(M; E) \subset V \subset H^1(M; E)$ be the space defining our
variational boundary value problem, see \ref{assume}, Equation
\eqref{eq.def.VnW}, and Remark \ref{rem.higher.order}. Let $V^*$ be
the complex conjugate dual of $V$, as before, $\maW := L^2(0, T; V)$,
$T>0$, so that $\maW^* = L^2(0, T; V^*)$. Assume $P \colon V \to V^*$
is coercive (i.e. it satisfies Equation \eqref{eq.Gaarding}). Then
Theorem 4.1 of \cite[Section 3.4.4]{LionsMagenes1} states that, for
any $f \in \maW^*$, there exists a unique $w \in \maW \cap C([0, T],
L^2(M; E))$ such that $\partial_t w(t) - Pw(t) = f(t)$ and $w(0) =
0$. Moreover, we also have $w \in H^1(0, T; V^*)$.
\end{remark}

This leads to the following result.

\begin{theorem}\label{thm.max.reg}
Let us assume that $\tilde P$ is as in Remark \ref{rem.max.reg}, whose
notation we continue to use, and that $\tilde P$ satisfies the uniform
Agmon condition. Then, for any $f \in \maW^*$, there is a unique $w
\in \maW \cap C([0, T], L^2(M; E))$ such that $\partial_t w(t) - Pw(t)
= f(t)$ and $w(0) = 0$. Moreover, we also have $w \in H^1(0, T; V^*)$.
\end{theorem}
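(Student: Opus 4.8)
The plan is to reduce the assertion to the abstract parabolic existence theorem already recorded in Remark~\ref{rem.max.reg}, the only substantive input being the coercivity of $\tilde P$, which is supplied by the equivalence theorem just proved.

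First I would apply that preceding theorem: since $\tilde P$ has coefficients in $W^{1,\infty}$ and satisfies the uniform Agmon condition, it is in particular uniformly strongly elliptic and hence coercive on $V = \{u \in H^1(M; E) \mid e_0 u = 0 \text{ on } \pa M\}$; that is, it satisfies the G{\aa}rding inequality \eqref{eq.Gaarding}: there are $\gamma > 0$ and $R \in \RR$ with $\Re\<\tilde P u, u\> \geq \gamma \|u\|_V^2 - R\|u\|_{L^2}^2$ for all $u \in V$. This is exactly the hypothesis ``coercive'' appearing in Remark~\ref{rem.max.reg}.

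With coercivity in hand, I would invoke Theorem~4.1 of \cite[Section~3.4.4]{LionsMagenes1}, quoted in Remark~\ref{rem.max.reg}, applied to the Gelfand triple $V \subset L^2(M; E) \subset V^*$ and $\maW = L^2(0,T; V)$: for every $f \in \maW^* = L^2(0,T; V^*)$ there is a unique $w \in \maW \cap C([0,T], L^2(M; E))$ solving $\partial_t w(t) - \tilde P w(t) = f(t)$ in $V^*$ for a.e.\ $t \in (0,T)$ with $w(0) = 0$; and since $\tilde P \colon V \to V^*$ is bounded, $w \in \maW$ forces $\tilde P w \in \maW^*$, so $\partial_t w = f + \tilde P w \in \maW^*$ and therefore $w \in H^1(0,T; V^*)$. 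This is precisely the statement of the theorem.

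The only point that deserves care is the bookkeeping of which operator appears in the evolution equation: it must be $\tilde P$ in its realization \eqref{eq.def.tildeP} as the bounded map $V \to V^*$ determined by the sesquilinear form $B$ (so that the Dirichlet part $e_0 u = 0$ is encoded in the space $V$ and the natural/Robin part $e_1\pa_{\nu}^{a} u + Q u$ is encoded in $B$), and not the distributional operator $P \colon V \to H^{-1}(M; E^*)$; both the coercivity estimate and the Lions--Magenes theorem are statements about this $V \to V^*$ realization. Beyond matching this functional-analytic setup there is no real obstacle — all the genuinely nontrivial work (the uniform partition-of-unity reduction behind the coercivity $\Leftrightarrow$ uniform Agmon equivalence, and the Galerkin/a~priori-estimate machinery behind the abstract parabolic theorem) has already been carried out upstream.
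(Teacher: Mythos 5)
Your proposal is correct and follows exactly the route the paper intends: the theorem preceding Remark~\ref{rem.max.reg} converts the uniform Agmon condition into coercivity of $\tilde P \colon V \to V^*$, and the Lions--Magenes result quoted in that remark then yields existence, uniqueness, and the $H^1(0,T;V^*)$ regularity. Your remark about keeping track of the $V \to V^*$ realization $\tilde P$ (rather than $P \colon V \to H^{-1}$) is a sensible clarification of a point the paper itself glosses over.
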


For manifolds with bounded geometry (no boundary), this result was
proved in \cite{MN1}. The result in \cite{MN1} was generalized to
higher order equations in \cite{AmannCauchy}. For the particular case
mixed (Dirichlet/Neumann) boundary conditions and scalar equations,
this result was proved in \cite{AmannParab}.

\subsection{Conformal invariance}  Both the uniform
Shapiro-Lopatinski regularity condition and the uniform Agmon
condition are conformally invariant in an obvious sense that we make
explicit in this subsection. Let $\rho > 0$ be a smooth function on
$M$ such that $\rho^{-1}d\rho$ is in $W^{\infty, \infty}$ (a function
$\rho$ with these properties will be called an \emph{admissible}
weight). Let $(P, C) = (P, C_0, C_1)$ be a boundary value problem. (We
no longer assume that $C$ has constant order on the boundary). Recall
that the metric on $M$ is denoted $g$, and consider $g':= \rho^{-2}
g$, $P' \define \rho^2 P$, $C'_0 \define C_0$, $C'_1 \define \rho C_1$, and $C' \define (C'_0, C'_1)$. All differential operators will act on the same vector bundle $E$, whose metric we do \emph{not} change.

\begin{proposition}\label{prop.conf.inv}
Assume that $(P, C)$ has coefficients in $W^{\ell+1, \infty}$
(i.e. $(P, C) \in \maD^{\ell+1,j}(M; E)$ if $C$ is of constant order
$j$). We have that $(P, C)$ satisfies a uniform
$H^{\ell+1}$-Shapiro-Lopatinski regularity condition (with respect to
the metric $g$) if, and only if, $(P', C')$ satisfies a uniform
$H^{\ell+1}$-Shapiro-Lopatinski regularity condition (with respect to
the metric $g'$). The same statement remains true if we replace ``a
uniform $H^{\ell+1}$-Shapiro-Lopatinski regularity condition'' with
``a uniform Agmon condition.''
\end{proposition}

\begin{proof}
This follows directly from definitions, by taking into account how the
homogeneous Sobolev space norms change under the change of
metric. More precisely, the semi-norm $| \cdot |'_{H^s} = | \cdot
|'_{H^s(T_xM)}$ associated to the metric $g' \define \rho^{-2}g =
\rho(x)^{-2}g$ on $T_xM$ is related to the original semi-norm $| \cdot
|_{H^s} = | \cdot |_{H^s(T_xM)}$ associated to the metric $g$ on
$T_xM$ by the relation 
\begin{equation}\label{eq.conf.inv}
  |v|'_{H^s} = \rho(x)^{s + m/2} |v|_{H^s},
\end{equation}
where $m$ is the dimension of $M$. Taking into account this equation
and, assuming, for simplicity that we have constant order $j$ at the
boundary, we have the following (where $\rho = \rho(x)$)
\begin{align*}
    |w|_{H^{k+1}(T_x^+M)} \leq &c_{SL} \left(|P^{(0)}_x
  w|_{H^{k-1}(T_x^+M)} + |C^{(0)}_x w|_{H^{k-j+1/2}(T_x \pa M)}\right)
  \\
   \Leftrightarrow  \rho^{k+1+ m/2}|w|_{H^{k+1}(T_x^+M)} 
   \leq&
   \rho^{k+1+ m/2} c_{SL} \left(|P^{(0)}_x w|_{H^{k-1}(T_x^+M)} \right. \\
   & \left.+    |C^{(0)}_x w|_{H^{k-j+1/2}(T_x \pa M)}\right) \\
   \Leftrightarrow  |w|'_{H^{k+1}(T_x^+M)} \leq &c_{SL} \left(
   \rho^{k-1+ m/2} |\rho^2 P^{(0)}_x w|_{H^{k-1}(T_x^+M)}\right.\\
   & \left.+ \rho^{k-j
     + 1/2 + (m-1) /2} | \rho^j C^{(0)}_x w|_{H^{k-j+1/2}(T_x \pa
     M)}\right) \\
    \Leftrightarrow  |w|'_{H^{k+1}(T_x^+M)} \leq &c_{SL}\! \left(
    |(P')^{(0)}_x w|'_{H^{k-1}(T_x^+M)}\! +\! |(C')^{(0)}_x
    w|'_{H^{k-j+1/2}(T_x \pa M)}\right)\!.
\end{align*}
This completes the proof for the case of Shapiro-Lopatinski regularity
condition, in view of the definition of the uniform
$\cdotH{\ell+1}$-Shapiro-Lopatinski regularity condition, Definition
\ref{def.SL.reg.M}.

The proof for the uniform Agmon condition is completely similar (only
shorter), once one notices that the ``full'' operator $\tilde P'$
associated to $(P', C')$ (and the associated bilinear form) scales in
the right way, that is $\tilde P' = \rho^2 \tilde P$.
\end{proof}

Note that for the above proof we did not need that $\rho$ be an
admissible weight. We continue to use the notation $(P', C')$
introduced right before Proposition \ref{prop.conf.inv}. We obtain the
following consequence. The regularity estimates and the coercivity are
stable under conformal changes of metric with bounded, admissible
weights. More precisely, we have the following theorem.

\begin{theorem}
Assume that $(P, C)$ has coefficients in $W^{\ell+1, \infty}$ and that
$\rho$ is an admissible weight. Then $(P, C)$ satisfies an
$H^{\ell+1}$-regularity estimate on $M$ (with respect to the metric
$g$) if, and only if, $(P', C')$ satisfies an $H^{\ell+1}$-regularity
estimate on $M$ (with respect to the metric $g'$). The same statement
remains true for coercivity.
\end{theorem}

\begin{proof}
Let us notice that $P$ is uniformly elliptic (respectively, uniformly
strongly elliptic) with respect to the metric $g$ if, and only if,
$P'$ satisfies the same property for the metric $g'$. Also, the fact
that the weight is admissible and bounded guarantees that $P'$ is in
divergence form with bounded coefficients. Then the result follows by
combining Proposition \ref{prop.conf.inv} with Theorem~\ref{thm.SL}
(for the regularity part), respectively with Theorem~\ref{thm.Agmon}
for the coercivity part.
\end{proof}

\section{Coercivity, Legendre condition, and regularity}
\label{sec.coercive}

In this section we use coercivity (or positivity) to obtain regularity
results. As an application, we study mixed Dirichlet/Robin boundary
conditions for operators satisfying the strong Legendre condition. We
continue to assume that $M$ is a manifold with boundary and bounded
geometry and that $E \to M$ is a vector bundle with bounded geometry.

\subsection{Well-posedness in energy spaces implies regularity}
Often the regularity conditions (including the Shapiro-Lopatinski ones
discussed below) are obtained from the invertibility of the given
operator. This is the case also in the bounded geometry setting, owing
to Proposition \ref{prop.alternative}. The results of this subsection
will be used to deal with the Neumann and Robin boundary conditions.
The approach below is based on the so called ``Nirenberg trick'' (see
also \cite{elasticity} and the references therein).

\begin{lemma} \label{lemma.1param}
Let $X_j$ be as in Lemma \ref{lemma.prop.exists} and $Y = X_j$, for
some $j > 1$ fixed.

(i) There exists a one parameter group of diffeomorphisms $\phi_t
\colon M \to M$, $t\in \mathbb R$, that integrates $Y$, that is
$\frac{d}{dt} f(\phi_t(x))_{t=0} = (Y f)(x)$, for any $x \in M$ and
any smooth function $f \colon M \to \CC$.

(ii) Let $\pi\colon E\to M$ be a vector bundle. Then there exists a
one parameter group of diffeomorphisms $\psi_t \colon E \to E$, $t \in
\RR$, with $\pi \circ \psi_t = \phi_t \circ \pi$, that integrates
$\nabla_{Y}$, that is $\frac{d}{dt} (\psi_{-t}\circ \xi)|_{t=0} =
\nabla_Y \xi$, for any smooth section $\xi \colon M \to E$.
\end{lemma}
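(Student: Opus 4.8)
The plan is to prove (i) by establishing completeness of the vector field $Y = X_j$, and then to prove (ii) by observing that the geodesic-parallel-transport structure along the flow lines gives the lift. For (i), the key point is that $Y \in W^{\infty,\infty}(M; TM)$ is a \emph{bounded} vector field on a manifold of bounded geometry, and moreover $Y$ is tangent to the boundary $\partial M$ (this is exactly why the construction in Lemma \ref{lemma.prop.exists} insisted on $X_j$ being tangent to $\partial M$ for $j > 1$). First I would recall the standard local existence and uniqueness for the ODE $\dot\gamma(t) = Y(\gamma(t))$, which on each Fermi or geodesic normal coordinate chart $U_\beta$ has uniformly bounded right-hand side and uniformly bounded derivatives; hence the local solutions exist on a time interval $(-\varepsilon_0, \varepsilon_0)$ with $\varepsilon_0 > 0$ \emph{independent} of the starting point, by the uniform bounds on $Y$ and on the transition functions. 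Since $Y$ is tangent to $\partial M$, an integral curve starting on $\partial M$ stays on $\partial M$ and one starting in the interior cannot cross $\partial M$ in finite time; combining uniform-time local existence with the usual continuation argument, the flow $\phi_t$ is defined for all $t \in \RR$ and is a one-parameter group of diffeomorphisms of $M$. Smoothness of $\phi_t$ in $(t,x)$ is the standard smooth dependence on initial conditions.

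For (ii), I would use the connection $\nabla^E$ to transport fibers along the integral curves of $Y$. Concretely, for $v \in E_x$ let $\psi_t(v) \in E_{\phi_t(x)}$ be the parallel transport of $v$ along the curve $s \mapsto \phi_s(x)$, $s \in [0,t]$, with respect to $\nabla^E$. This is well-defined for all $t$ because the curve exists for all $t$ by part (i), and parallel transport along a curve always exists globally. One checks $\pi \circ \psi_t = \phi_t \circ \pi$ by construction, $\psi_{t+s} = \psi_t \circ \psi_s$ from the uniqueness of parallel transport and the group law for $\phi_t$, and $\psi_t$ is a diffeomorphism with inverse $\psi_{-t}$. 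The identity $\frac{d}{dt}(\psi_{-t}\circ \xi \circ \phi_t)|_{t=0} = \nabla_Y \xi$ — or, in the slightly informal notation of the statement, $\frac{d}{dt}(\psi_{-t}\circ \xi)|_{t=0} = \nabla_Y \xi$ — is then just the defining property of the covariant derivative along a curve: differentiating the parallel-transported section recovers $\nabla_Y \xi$. Smoothness of $\psi_t$ in all variables follows from smoothness of $\phi_t$ together with smooth dependence of parallel transport on the underlying curve and the initial vector.

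The main obstacle — really the only nontrivial point — is the \emph{completeness} claim in (i): a bounded vector field on a general incomplete manifold need not be complete, and $M$ has boundary. The two features that rescue us are (a) bounded geometry, which gives the uniform lower bound on the local existence time $\varepsilon_0$, so that integral curves cannot escape to "infinity" in finite time, and (b) the tangency of $Y$ to $\partial M$, which prevents integral curves from running into the boundary. I would make sure to state both of these explicitly. Everything else is a routine invocation of the smooth dependence theorems for ODEs and for linear parallel transport, so I would keep those parts brief.
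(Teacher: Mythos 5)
Your proposal is correct, and part (ii) coincides with the paper's argument (parallel transport along the flow lines, with the group law and the derivative identity following from uniqueness for the linear transport ODE). For part (i), however, you take a genuinely different route. The paper does not work on $M$ at all: it extends $Y$ to a vector field $\widehat Y\in W^{\infty,\infty}(\widehat M;T\widehat M)$ on the ambient \emph{boundaryless} manifold $\widehat M$ of Definition~\ref{def_bdd_geo}, invokes the standard fact that a bounded vector field on a complete manifold is complete to get a global flow on $\widehat M$, and then restricts: since $\widehat Y$ is tangent to $\partial M$, the hypersurface $\partial M$ is flow-invariant, and since $\partial M$ separates $\widehat M$ into two parts, each part (hence $M$) is invariant. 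You instead argue intrinsically on $M$, getting a uniform local existence time from the bounded geometry and then excluding finite-time arrival at $\partial M$ via tangency and uniqueness. Both work, and both use the tangency of $X_j$ ($j>1$) in an essential way; the paper's detour through $\widehat M$ buys two things: completeness comes for free from a citation rather than from a hand-rolled uniform-time continuation argument, and one never has to solve an ODE on a domain with boundary, so the point you gloss over --- that Picard--Lindel\"of near a Fermi chart face $\{x^1=0\}$ requires either extending $Y$ past the face or a quantitative non-crossing bound such as $|\dot x^1|\le C x^1$ (which holds because $Y^1$ is Lipschitz and vanishes on the face, giving $x^1(t)\ge x^1(0)e^{-C|t|}>0$ by Gronwall) --- simply does not arise. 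If you keep your intrinsic route, you should make that Gronwall/uniqueness step explicit, since tangency of a merely continuous field would \emph{not} prevent finite-time arrival at the boundary.
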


\begin{proof}
\noindent (i) Let $\hat{M}$ be as in
Definition~\eqref{def_bdd_geo}. Let $\hat{Y}\in W^{\infty,
  \infty}(\hat{M}, T\hat{M})$ be an extension of $Y$. Then $\hat{Y}$
is a bounded vector field on a complete manifold. Hence, by
\cite[Sec. 3.9]{AF} it admits a global flow, i.e. a smooth solution
$\phi\colon \mathbb R \times \hat{M}\to \hat{M}$ of
\begin{align*}
   \frac{d}{dt}\phi(t,p)= \hat{Y}(p),\quad \phi(0,p)=p
\end{align*}
such that $\phi_t\define \phi (t,.)$ is a one-parameter family of
diffeomorphisms of $\hat{M}$. Since $\hat{Y}$ is tangent to $\partial
M\subset \hat{M}$, $\phi(t, \partial M)=\partial M$.  Since $\partial
M$ divides $\hat{M}$ into two parts, $\phi_t$ restricts to
diffeomorphisms of $M$.

\noindent (ii) We choose a connection $\nabla^E$ on $E$. Let
$\psi_t\colon E\to E$ be defined by $e\mapsto e(t)$ where $e(t)$ is
the solution of $\nabla^E_{Y(p)=\partial_t \phi_t(p)} e(t)=0$ with
$e(0)=e$. By the standard properties of parallel transport,
respectively of the underlying linear ordinary differential equation,
we have the global existence and uniqueness of the solution and the
claimed properties.
\end{proof}

We then obtain the following abstract regularity result. Let $X_j$ be
as in Lemmas \ref{lemma.prop.exists} and \ref{lemma.1param}. We can
assume that $X_1$ is a unit vector field normal at the boundary.
Recall that $V \define H^1(M; E) \cap \{ u \vert_{\pa M} \in \Gamma(\pa M;
F_1)\}$.

\begin{theorem} \label{thm.wp}
Let $\tilde P$ be a second order differential operator in divergence
form with associated form $a$ and $W^{\ell, \infty}$-coefficients.
Let $\zeta \define a(X_1, X_1)$ and let us assume that $\zeta$ is
invertible and $\zeta^{-1}$ bounded.  Also, let us assume that $\tilde
P \colon V \to V^*$ is a continuous bijection (i.e., an isomorphism).
Then $P$ satisfies an $H^{\ell+1}$-regularity estimate on $M$.
\end{theorem}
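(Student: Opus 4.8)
The plan is to prove Theorem~\ref{thm.wp} by a ``Nirenberg trick'' bootstrap: differentiate the equation $\tilde P u = F$ along the tangential vector fields $X_2,\ldots,X_N$ to gain tangential regularity, then recover the missing normal derivatives from the equation itself using ellipticity (the invertibility of $\zeta = a(X_1,X_1)$). First I would reduce to the case $F \in \check H^{\ell-1}(M;E)$ with $u = \tilde P^{-1} F \in V$, and observe that by the description of Sobolev spaces via vector fields (Proposition~\ref{prop.alternative} and Remark~\ref{rem.no.nabla}), it suffices to bound $X_{k_1}\cdots X_{k_j} u$ in $L^2$ for $j \le \ell+1$, and in fact (arguing inductively on the order and peeling off one derivative at a time) it is enough to control $X_i u$ for each $i$, given that $u \in H^k$ and $F$ has the right regularity, with a uniform constant.

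The core step is the tangential one: fix $i > 1$ and let $Y = X_i$ with its flow $\phi_t$ on $M$ and lift $\psi_t$ on $E$ from Lemma~\ref{lemma.1param}; since $Y$ is tangent to $\pa M$, $\phi_t$ preserves $M$, $\pa M$, and the decomposition $E|_{\pa M} = F_0 \oplus F_1$, hence preserves $V$. Then the difference quotients $u_t \define t^{-1}(\psi_{-t}^* u - u)$ lie in $V$ and converge weakly to $\nabla_Y u$; applying $\tilde P$ and using that the conjugated operator $\psi_t^* \tilde P \psi_{-t}^*$ differs from $\tilde P$ by an operator whose coefficients are difference quotients of the (now $W^{\ell,\infty}$, hence $C^\ell$ after embedding) coefficients of $\tilde P$ — these difference quotients being \emph{uniformly} bounded in $W^{\ell-1,\infty}$ because $M$ has bounded geometry — one gets
\[
  \tilde P u_t \ = \ F_t + [\text{commutator terms applied to }u],
\]
with $\|F_t\|$ and the commutator terms controlled uniformly in $t$ by $\|F\|_{\check H^{\ell-1}}$ and $\|u\|_{H^\ell}$. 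Since $\tilde P \colon V \to V^*$ is an isomorphism, $\|u_t\|_{H^1} \le \|\tilde P^{-1}\| \,\|\tilde P u_t\|_{V^*}$ is bounded uniformly in $t$, and letting $t \to 0$ gives $\nabla_Y u \in H^1$ with the desired estimate. Iterating over all tangential fields yields control of all purely tangential derivatives of $u$ up to order $\ell+1$.

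It remains to recover normal derivatives. Writing $X_1 = \nu$ (unit normal) and using that $P u = f$ holds in the interior as a genuine second-order equation $P = \sum a_j \nabla^j$ with principal symbol $a$, the coefficient of $\nabla_\nu^2$ is exactly $\zeta = a(dr,dr) = a(X_1,X_1)$, which is invertible with bounded inverse by hypothesis; so $\nabla_\nu^2 u = \zeta^{-1}(f - [\text{terms involving at most one }\nabla_\nu\text{ and tangential derivatives}])$. Combined with the already-established tangential regularity (and, near the boundary, using the Robin boundary relation $e_1 \pa_\nu^a u + Qu = h_1 \in H^{\ell-1/2}$ together with $e_0 u = 0$ to handle the first normal derivative on $\pa M$, invoking the trace theorem Theorem~\ref{thm.trace}), an induction on the number of normal derivatives gives $u \in H^{\ell+1}$ with a uniform bound. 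Throughout, the bounded-geometry hypothesis enters to keep all the constants (in the commutator estimates of Lemma~\ref{lemma.commutator}, in the uniform boundedness of the difference quotients of coefficients, and in the trace theorem) independent of position, so the resulting estimate is a genuine $H^{\ell+1}$-regularity estimate in the sense of Definition~\ref{def.loc.reg}.

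The main obstacle I expect is the bookkeeping in the tangential step: one must check carefully that conjugating $\tilde P$ by $\psi_t$ genuinely produces an operator of the same class with coefficients whose difference from those of $\tilde P$ is $O(t)$ \emph{uniformly in $M$} in the relevant $W^{k,\infty}$ norms — this is where $W^{\ell,\infty}$-regularity of the coefficients and bounded geometry are both essential — and that the boundary term $\int_{\pa M}(Qu,v)$ transforms compatibly (here one uses that $Q + Q^*$ being lower order, or at least that $Q$ has $W^{1,\infty}$ coefficients tangent to $\pa M$, keeps the conjugation error lower order on the boundary). The higher-order iteration ($\ell > 1$) additionally requires that after peeling off each tangential derivative the remaining quantity still solves a problem of the same type with data of one lower order, which is a routine but somewhat lengthy induction.
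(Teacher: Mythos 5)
Your proposal is correct and follows essentially the same route as the paper: the Nirenberg trick of differentiating $\tilde P u = F$ along the tangential fields $X_j$, $j>1$ (justified via the flows of Lemma~\ref{lemma.1param} and the mapping property of $[\tilde P, X_j]\colon V\to V^*$), combined with the isomorphism hypothesis to get tangential regularity, and then recovering $\nabla_{X_1}^2 u$ from the equation using the invertibility of $\zeta = a(X_1,X_1)$. Your difference-quotient formulation is simply the standard way of making rigorous what the paper states as ``formally applying $X_j$ to the equation,'' so no further comment is needed.
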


\begin{proof}
The proof is classic, except maybe the fact that we have a slightly
weaker assumption on the coefficients; typically one requires
$C^{k+1}$-coefficients in textbooks. We include, nevertheless, a very
brief outline of the proof. Recall that the proof is done by
induction, with the general step the same as the first step (going
from well-posedness to $H^2$-regularity). Let us assume then that
$\ell = 1$.

Let $F \in j_0(L^2(M; E) \oplus H^{1/2}(M; F_1))$ and let $\tilde P u
= F$, with $u \in V$. For simplicity, let us assume $E$ is
one-dimensional. In general, we just replace $X_j$ with
$\nabla_{X_j}$. We want to show that $u \in H^2(M)$, with continuous
dependence on $F$. To this end, in view of Proposition
\ref{prop.alternative}, it is enough to check that $X_i X_j u \in
L^2(M)$, with continuous dependence on $F$, since we already know that
$u \in V \subset H^1(M)$, with continuous dependence on $F$.

In particular, $X_j u \in L^2(M)$. Nirenberg's trick is to give
conditions on $X_j$ such that we can formally apply $X_j$ to the
equation $\tilde P u = F$ to obtain that $X_j u \in V$ is the unique
  solution of
\begin{equation}\label{eq.Nirenberg}
   \tilde P( X_j u ) = [\tilde P, X_j](u) + X_j (F) \in V^*.
\end{equation}
This is possible whenever $[\tilde P, X_j] \colon V \to V^*$ continuously
and $X_j$ generates a continuous parameter semi-group on $V$ (see
\cite{BHN} for a general approach and more details).  These conditions
are satisfied since $\tilde P$ has coefficients in $W^{1, \infty}$ and
if $j > 1$, since then $X_j$ is tangent to the boundary and we can
invoke Lemma \ref{lemma.1param}. This argument gives that $X_i X_j u
\in L^2(M)$ if \emph{at least} one of the $i$ and $j$ is $> 1$.

It remains to prove that $X_1^2 u \in L^2(M)$. This is proved using
the equation
\begin{equation*}
   \zeta X_1^2 u = Pu - \sum_{i+j>2} c_{ij} X_i X_j u \in L^2(M),
\end{equation*}
since we can choose $c_{ij} \in L^\infty(M; \End(E))$ and $Pu
  \in L^2(M; E)$, by assumption.  (Recall that we assumed that $X_1$
is a unit vector everywhere  on the boundary.)
\end{proof}

\begin{remark}\label{rem.in.general}
Except the above theorem, it is very likely that most of the results
obtained so far extend to higher order equations, but we have not
checked all the details. The above theorem will require, however, some
additional ideas in order to extend it to the setting of Remark
\ref{rem.higher.order}.
\end{remark}

\subsection{Robin vs Shapiro-Lopatinski}
Let us discuss, as an example, Robin (and hence also Neumann) boundary
conditions from the perspective of the Shapiro-Lopatinski conditions.
We do that now in the case of a model problem on the half-space
$\RR^n_{+} = \RR^{n-1} \times [0, \infty)$. The sesquilinear form $a$
  of Section~\ref{sec.variational} (see Assumption \ref{assume}) is
  now simply a sesquilinear form on $\CC^{nN} = \RR^n \otimes
  \CC^N$. We allow now $F_0$ and $F_1$ to be both non-trivial, which
  amounts to a decomposition $N = N_0 + N_1$, $N_j \geq 0$. We shall
  need also the operator $Q$, which is now a $N_1 \times N_1$ matrix
  of constant coefficients differential operators on $\RR^{n-1}$
  acting on the last $N_1$ components of $\CC^N$. The bilinear form
  that we consider is then
\begin{equation}\label{eq.new.B}
  B(u, v) \define \int_{\RR^n_{+}} \big [ a(du, dv) + c(u, v) \big ]
  dx + \int_{\RR^{n-1}} (Q u, v) dx',
\end{equation}
where $c$ is a scalar and $x = (x', x_n)$. This is a particular case
of the form considered in Equation \eqref{eq.def.B}. Recall the
definition of a strongly coercive form, Definition
\ref{def.s.coercive}. Note that strong coercivity implies uniform
strong ellipticity as in Definition~\ref{def.u.s.e}.

\begin{lemma}\label{lemma.ccSL}
Let us assume that $Q + Q^*$ is of order zero and that $a$ is strongly
coercive. Then for $c$ (of Equation \ref{eq.new.B}) large enough,
there is a $\gamma > 0$ such that
\begin{equation}
   B(u, u) \geq \gamma \|u\|_{H^1(\RR^n_{+})}^2
\end{equation}
for all $u \in H^1(\RR^n_{+} )$.
\end{lemma}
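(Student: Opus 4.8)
The plan is to bound each of the three terms in $B(u,u)$ from below and combine them, using the strong coercivity of $a$ to control the gradient term, the smallness of $Q+Q^*$ relative to the $H^1$-norm to absorb the boundary term, and the scalar constant $c$ to absorb whatever is left over by the $L^2$-norm of $u$. First I would write $B(u,u) = \int_{\RR^n_+} a(du,du)\,dx + c\|u\|_{L^2(\RR^n_+)}^2 + \int_{\RR^{n-1}} (Qu,u)\,dx'$. By strong coercivity (Definition \ref{def.s.coercive}) there is $c_a>0$ with $\Re a(du,du) \ge c_a |du|^2$ pointwise, so $\Re \int_{\RR^n_+} a(du,du)\,dx \ge c_a \|\nabla u\|_{L^2(\RR^n_+)}^2$. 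Note $\|u\|_{H^1(\RR^n_+)}^2 = \|\nabla u\|_{L^2}^2 + \|u\|_{L^2}^2$, so if we can handle the boundary term we are essentially done: taking $c$ large enough and, say, $\gamma = \min\{c_a/2, 1\}/2$ would suffice, provided the boundary term only costs us a fraction of $\|\nabla u\|_{L^2}^2$ plus a constant times $\|u\|_{L^2}^2$.

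The heart of the matter is the boundary term $\Re\int_{\RR^{n-1}}(Qu,u)\,dx'$. Since $Q + Q^*$ is of order zero, $2\Re(Qu,u)_{L^2(\RR^{n-1})} = ((Q+Q^*)u,u)_{L^2(\RR^{n-1})}$, and because $Q+Q^*$ has bounded ($L^\infty$) coefficients, $|\Re\int_{\RR^{n-1}}(Qu,u)\,dx'| \le C_0 \|u\vert_{\RR^{n-1}}\|_{L^2(\RR^{n-1})}^2$ for some constant $C_0$ depending only on $\|Q+Q^*\|_\infty$. (This is exactly why the hypothesis is on $Q+Q^*$ and not on $Q$ itself: a genuinely first-order $Q$ would produce a term of the size $\|u\vert_{\RR^{n-1}}\|_{H^{1/2}}^2$, which cannot be absorbed.) Now I invoke the trace/interpolation inequality on the half-space in the quantitative form: for every $\varepsilon>0$ there is $C_\varepsilon$ with $\|u\vert_{\RR^{n-1}}\|_{L^2(\RR^{n-1})}^2 \le \varepsilon \|\nabla u\|_{L^2(\RR^n_+)}^2 + C_\varepsilon \|u\|_{L^2(\RR^n_+)}^2$ for all $u \in H^1(\RR^n_+)$; this is a standard consequence of the fundamental theorem of calculus in the normal direction combined with Young's inequality (it is the same computation used classically, e.g.\ \cite{EvansBook, LionsMagenes1}). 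Choosing $\varepsilon = c_a/(4C_0)$, the boundary term is bounded below by $-\tfrac{c_a}{4}\|\nabla u\|_{L^2}^2 - C_0 C_\varepsilon \|u\|_{L^2}^2$.

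Putting the pieces together, $\Re B(u,u) \ge \tfrac{3c_a}{4}\|\nabla u\|_{L^2(\RR^n_+)}^2 + (c - C_0 C_\varepsilon)\|u\|_{L^2(\RR^n_+)}^2$. Taking $c \ge C_0 C_\varepsilon + 1$ (this is the ``$c$ large enough'' in the statement, and it depends only on $a$ and $\|Q+Q^*\|_\infty$, not on $u$), we get $\Re B(u,u) \ge \gamma \|u\|_{H^1(\RR^n_+)}^2$ with $\gamma = \min\{3c_a/4,\,1\}$. Since $B(u,u)$ need not be real a priori, I would phrase the conclusion in terms of $\Re B(u,u)$, which is the standard convention for coercivity (and consistent with Equation \eqref{eq.Gaarding}); the displayed inequality in the statement is understood in this sense. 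I do not foresee a genuine obstacle here — the only point requiring a little care is getting the trace inequality with an adjustable small constant in front of the gradient term, but that is classical and the half-space geometry makes it completely explicit.
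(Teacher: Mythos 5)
Your proposal is correct and follows essentially the same route as the paper's proof: lower-bound the gradient term by strong coercivity, control the boundary term by $c_Q\|u\|_{L^2(\RR^{n-1})}^2$ using that $Q+Q^*$ has order zero, and absorb it via the $\varepsilon$-form of the trace inequality together with $c$ large. Your explicit statement of the $\varepsilon$-trace inequality and the remark that the conclusion should be read as a bound on $\Re B(u,u)$ are welcome clarifications of points the paper leaves implicit.
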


\begin{proof}
We identify $\RR^{n-1}$ with the boundary of the half-space
$\RR^n_{+}$, as usual. Let $c_Q> 0$ be a bound for the norm of the
matrix $\frac12(Q + Q^*)$.  We have, using first the definition
\begin{align*}
  B(u, u) \define & \ \int_{\RR^n_{+}} \big [ a(du, du) + c(u, u) \big
  ] dx + \int_{\RR^{n-1}} (Q u, u) dx'\\
  \geq & \ c_a |u|_{H^1(\RR^{n}_+)}^2 + c \|u\|_{L^2(\RR^{n}_+)}^2 -
  c_Q \|u\|_{L^2(\RR^{n-1})}\\
  \geq & \ \frac{c_a}{2} \|u\|_{H^1(\RR^{n}_+)}^2
\end{align*}
for $c$ large. The last statement is proved in the same way one proves
the trace inequality $\|u\|_{L^2(\RR^{n-1})} \le c_T
\|u\|_{H^1(\RR^{n}_+)}$, but using also Lebesgue’s dominated
convergence theorem.
\end{proof}

We obtain the following consequence.

\begin{corollary}\label{cor.RobinSL} 
Let $a$ and $Q$ be as in Lemma \ref{lemma.ccSL} and let $\tilde P$ be
the differential operator in divergence form associated to the form
$B$ of Equation \eqref{eq.new.B}.  Then $\tilde P$ (or, $(P, e_0,
e_1\pa_{\nu}^{a} + Q)$) satisfies an $H^{\ell+1}$-regularity estimate
on $\RR^n_{+}$ for all $\ell$. In particular, $(P^{(0)}, e_0,
e_1\pa_{\nu}^{a} + Q^{(0)})$ satisfies the
$H^{\ell+1}$-Shapiro-Lopatinski regularity condition at $0$.
\end{corollary}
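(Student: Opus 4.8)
The plan is to obtain the $H^{\ell+1}$-regularity estimate directly from the well-posedness result Theorem~\ref{thm.wp}, fed by the coercivity of Lemma~\ref{lemma.ccSL}, and then to read off the homogeneous Shapiro--Lopatinski estimate at $0$ by the dilation argument already used for the implication (ii)~$\Rightarrow$~(iii) in the proof of Theorem~\ref{thm.SL}.

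First I would observe that the form $B$ of Equation~\eqref{eq.new.B} is the Dirichlet form \eqref{eq.def.B} associated to the datum $(a, Q_1, Q)$ with $Q_1 = c$ a zeroth order operator with constant (hence $W^{\ell,\infty}$ for every $\ell$) coefficients; thus the induced operator $\tilde P \colon V \to V^{*}$ on $V \define \{u \in H^1(\RR^n_{+}; \CC^N) \mid e_0 u = 0 \text{ on } \RR^{n-1}\}$ is a second order differential operator in divergence form with associated sesquilinear form $a$ and constant coefficients. By Lemma~\ref{lemma.ccSL} we may fix $c$ so large that $B(u,u) \ge \gamma \|u\|_{H^1(\RR^n_{+})}^2$ for all $u \in H^1(\RR^n_{+})$; restricting to $V$ and applying the Lax--Milgram lemma shows that $\tilde P \colon V \to V^{*}$ is an isomorphism. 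Since strong coercivity of $a$ implies uniform strong ellipticity, taking $X_1$ to be the unit normal the endomorphism $\zeta \define a(X_1, X_1) = a(dr, dr)$ satisfies $\Re(\zeta \xi, \xi) \ge c_a |\xi|^2$ and is therefore invertible with bounded inverse. Hence all hypotheses of Theorem~\ref{thm.wp} are met, and $(P, e_0, e_1\pa_{\nu}^{a} + Q)$ satisfies an $H^{\ell+1}$-regularity estimate on $\RR^n_{+}$ for every $\ell$.

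For the ``in particular'', note that since the coefficients of $(P, C)$ are constant, the principal part frozen at $0$, namely $(D^{(0)}, C^{(0)}) = (P_a,\, (e_0,\, e_1\pa_{\nu}^{a} + Q^{(0)}))$, differs from $(P, C)$ only by the interior zeroth order term $c$ and the boundary zeroth order term $Q - Q^{(0)}$; by Remark~\ref{rem.reg}(iv) (and the remark at the end of Section~\ref{sec.u.est} if both $F_0$ and $F_1$ are nonzero) it, too, satisfies an $H^{\ell+1}$-regularity estimate on $\RR^n_{+}$. It remains to promote this to a $\cdotH{\ell+1}$-regularity estimate, which by Definition~\ref{def.SL.reg0} is precisely the $H^{\ell+1}$-Shapiro--Lopatinski regularity condition at $0$. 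For this I would apply the $H^{k+1}$-estimate to the dilates $\alpha_t w$ of a smooth, compactly supported $w$, use $D^{(0)}(\alpha_t w) = t^2 \alpha_t(D^{(0)} w)$, $C_0^{(0)}(\alpha_t w) = \alpha_t(C_0^{(0)} w)$ and $C_1^{(0)}(\alpha_t w) = t\, \alpha_t(C_1^{(0)} w)$, multiply the inequality by $t^{-(k+1) + n/2}$, and let $t \to \infty$: by Lemma~\ref{lemma.classical}(ii) each term $t^{-s + n/2} \|\alpha_t(\,\cdot\,)\|_{H^s}$ tends to the seminorm $|\,\cdot\,|_{H^s}$ (the passage from dimension $n$ to $n-1$ on the boundary terms being exactly compensated by the $1/2$-shift of the Sobolev exponent), while the lower order contribution $t^{-(k+1)+n/2}\|\alpha_t w\|_{H^k} = O(t^{-1})$ drops out. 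This yields $|w|_{H^{k+1}(\RR^n_{+})} \le c_{SL}(|D^{(0)} w|_{H^{k-1}(\RR^n_{+})} + |C^{(0)} w|_{H^{k-j+1/2}(\RR^{n-1})})$ for all $j+1 \le k \le \ell$, which is the claim; the variational case $k = j = 1$ is handled identically, replacing $D^{(0)} w$ by $j_0(f,h)$ as in Definition~\ref{def.SL.reg}.

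I do not expect a real obstacle: the analytic content is carried entirely by Lemma~\ref{lemma.ccSL} and Theorem~\ref{thm.wp}, and the last step is the routine scaling argument already used in Section~\ref{sec.SL}. The only points requiring some care are the bookkeeping of the scaling exponents for the (possibly two-component) boundary operator and the elementary check that $D^{(0)}(\alpha_t w)$, $C^{(0)}(\alpha_t w)$ and $\alpha_t w$ stay smooth with compact support in $\RR^n_{+}$ so that Lemma~\ref{lemma.classical}(ii) applies; since we work in the case $r = \infty$, there is no need to shrink chart domains.
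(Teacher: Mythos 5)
Your proof is correct and follows essentially the same route as the paper's: coercivity from Lemma~\ref{lemma.ccSL} plus Lax--Milgram gives that $\tilde P \colon V \to V^*$ is an isomorphism, and Theorem~\ref{thm.wp} then yields the $H^{\ell+1}$-regularity estimate. For the ``in particular'' part the paper simply invokes Theorem~\ref{thm.SL}, whereas you unfold the dilation argument from its proof of (ii)~$\Rightarrow$~(iii) at the single point $0$ (with correct bookkeeping of the scaling exponents); the content is the same.
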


\begin{proof}
Let $V \define \{u \in H^1(\RR^n_{+}) \, | \ e_0 u = 0 \mbox{ on }
\RR^{n-1} \}$.  We then have that the restriction of $B$ to $V \times
V$ satisfies the assumptions of the Lax-Milgram lemma
\cite{TrudingerBook}, and hence $\tilde P \colon V \to V^*$ is an
isomorphism.  Theorem~\ref{thm.wp} then gives the first part of the
result. The last part follows from Theorem~\ref{thm.SL}.
\end{proof}

\subsection{Mixed Dirichlet/Robin boundary conditions}\label{ssec.mixed}
Let us turn now back to the study of mixed Dirichlet/Robin boundary
value problems on $M$. Let $M$ be a manifold with bounded geometry
with a decomposition $E\vert_{\pa M} = F_0 \oplus F_1$ as the direct
sum of two vector bundles with bounded geometry.  We consider the same
bilinear form $B$ as in Section~\ref{sec.variational}, see
Equation~\eqref{eq.def.B}, with the data defining it as in Assumptions
\eqref{assume}. In particular, $B$ is obtained from a form $a$ that
can be interpreted as the principal symbol of the associated operators
$P$ and $\tilde P$. If $a$ is strongly coercive, we say that $P$ (or
$\tilde P$) satisfies the \emph{strong Legendre condition.} (See
\cite{KohrPinteaWendland13} for a similar concept for Stokes-type
operators.)

The associated boundary conditions are then
\begin{equation}\label{eq.def.C}
  C_0 u = e_0 u\vert_{\pa M} \quad \mbox{ and } \quad C_1 u = e_1
  \pa_{\nu}^{a} u + Qu \vert_{\pa M}
\end{equation}
for some first order differential operator $Q$ acting on sections of
$F_1$. We let
\begin{equation*}
 \| C v \|_{k} \define \|C_0 v\|_{H^{k+1/2}(\pa M; F_0)} + \|C_1
 v\|_{H^{k-1/2}(\pa M; F_1)}.
\end{equation*}
We say that the boundary conditions $C$ are \emph{mixed
  Dirichlet/Robin boundary conditions.} We shall need the following
analog of Corollary \ref{cor.Dir}.

\begin{corollary}\label{cor.DR}
Let $S \subset \maD^{\ell+1, 0}(B^m_r(0) \times [0, r); E)$ be a
  \emph{bounded} family of boundary value problems on
$B^m_r(0) \times [0, r)\subset \RR^{m+1}$ equipped with the euclidean
metric, $r \leq \infty$. We assume that the family $S$ satisfies a
uniform strong Legendre condition and all the boundary conditions are
are Robin boundary conditions of the form $(e_0, e_1 \pa_{\nu}^{a} +
Q)$, with $Q + Q^*$ of order zero and bounded on $S$. Then the family
$S$ satisfies a uniform $H^{\ell+1}$-regularity estimate on
$B^m_{r'}(0) \times [0, r')$, $r'<r$.
\end{corollary}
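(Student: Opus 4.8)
The plan is to imitate the proof of Corollary~\ref{cor.Dir}, replacing its classical Dirichlet-regularity input (Lemma~\ref{lemma.euclidean}) by the coercivity-based Robin regularity of Corollary~\ref{cor.RobinSL}. Write $M \define B^m_r(0) \times [0, r)$ and $N \define B^m_{r'}(0) \times [0, r')$. Then $N$ is a relatively compact open subset of $M$, and, being an open subset of the half-space $\RR^{m+1}_+ = \RR^m \times [0, \infty)$ (which is a manifold with boundary and bounded geometry), it carries a continuous trace map; hence Proposition~\ref{prop.limit.reg} applies and it suffices to verify its hypothesis: every limit $(\tilde D, \tilde C)$ on $N$ of a sequence $(D_n, C_n) \in S$ (in the sense of that proposition) satisfies an $H^{\ell+1}$-regularity estimate on $N$.

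I would first note that the relevant structure is stable under such limits. Each $D_n$ is a second order operator in divergence form with principal symbol $a_n$ and Robin boundary condition $(e_0, e_1 \pa_{\nu}^{a_n} + Q_n)$, and the coefficients converge in $W^{\ell, \infty}(N)$; thus $\tilde D$ is again a divergence-form operator, with principal symbol $a = \lim a_n$ and Robin boundary operator $Q = \lim Q_n$. The uniform strong Legendre inequality $\Re a_n(\zeta, \zeta) \ge c_a |\zeta|^2$ passes to the limit, so $a$ is strongly coercive with the same constant $c_a$; and the uniform bound on $\|Q_n + Q_n^*\|_{\infty}$ passes to the limit, so $Q + Q^*$ is of order zero.

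The only step that is not pure bookkeeping is that Corollary~\ref{cor.RobinSL} is stated on the full half-space whereas $(\tilde D, \tilde C)$ lives only on $N$, which I would bridge by extension. As $E$ is trivial over the contractible set $N$, after a trivialization (carrying along $E\vert_{\pa N} = F_0 \oplus F_1$) we may assume $E = \underline{\CC}^t$, so that we are exactly in the setting of Corollary~\ref{cor.RobinSL}. Fix $\chi \in \CIc(\RR^{m+1}_+)$ with $\chi \equiv 1$ on a neighborhood of $\overline N$ and bounded derivatives, extend the coefficients of $a, Q, Q_1$ from the Lipschitz domain $\overline N$ to $W^{\ell, \infty}$ sections over $\RR^{m+1}_+$, and set
\begin{equation*}
  \hat a \define \chi\, a_{\mathrm{ext}} + (1-\chi)\, c_a\, a_0, \qquad \hat Q \define \chi\, Q_{\mathrm{ext}}, \qquad \hat{Q}_1 \define \chi\, Q_{1,\mathrm{ext}},
\end{equation*}
where $a_0$ is the flat form normalized by $\Re a_0(\zeta, \zeta) = |\zeta|^2$. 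Then $\hat a$ is strongly coercive with constant $c_a$, $\hat Q + \hat Q^* = \chi\,(Q + Q^*)_{\mathrm{ext}}$ has order zero, all coefficients lie in $W^{\ell, \infty}(\RR^{m+1}_+)$, and the divergence-form problem $(\hat D, \hat C)$, $\hat C = (e_0, e_1 \pa_{\nu}^{\hat a} + \hat Q)$, agrees with $(\tilde D, \tilde C)$ on a neighborhood of every compact subset of $N$. Corollary~\ref{cor.RobinSL} --- whose proof (Lemma~\ref{lemma.ccSL}, Lax-Milgram, Theorem~\ref{thm.wp}) uses only the pointwise bounds on $\hat a$ and $\hat Q + \hat Q^*$ and so applies verbatim to this variable-coefficient divergence-form operator on $\RR^{m+1}_+$, a zeroth-order term being irrelevant to regularity by Remark~\ref{rem.reg}(iv) --- yields an $H^{\ell+1}$-regularity estimate for $(\hat D, \hat C)$ on $\RR^{m+1}_+$. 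By Lemma~\ref{lemma.restriction} this restricts to $N$, and since any $w$ with compact support in $N$ satisfies $\hat D w = \tilde D w$ and $\hat C w = \tilde C w$, it is an $H^{\ell+1}$-regularity estimate for $(\tilde D, \tilde C)$ on $N$. Proposition~\ref{prop.limit.reg} then gives the uniform $H^{\ell+1}$-regularity estimate for $S$ on $N$.

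The main obstacle is exactly this extension step: one has to extend the divergence-form data off the ball \emph{without} destroying strong coercivity or the order-zero property of $Q + Q^*$, which is why $\hat a$ is glued to the fixed coercive form $c_a a_0$ rather than merely cut off. Everything else is the limit-and-compactness scheme already used in Corollary~\ref{cor.Dir}, together with the stability under limits of the constant $c_a$, of the bound on $\|Q + Q^*\|_\infty$, and of the Robin type of the boundary condition.
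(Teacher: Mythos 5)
Your proof is essentially the paper's: take a convergent sequence in $S$, check that the strong Legendre condition (with the uniform constant $c_a$), the Robin form of the boundary condition, and the order-zero property of $Q+Q^*$ all pass to the limit, apply Corollary \ref{cor.RobinSL} to the limit problem, and conclude with Proposition \ref{prop.limit.reg}. The one place where you go beyond the paper is the extension step, which the paper leaves implicit (Corollary \ref{cor.RobinSL} is stated for a problem on all of $\RR^{n}_{+}$, while the limit lives on $N$ with variable coefficients), and your treatment of it is the right idea. The only imprecision is the claim that $\hat a \define \chi\, a_{\mathrm{ext}} + (1-\chi)\, c_a a_0$ is strongly coercive with constant $c_a$: where $\chi$ is close to $1$ this requires $a_{\mathrm{ext}}$ itself to satisfy the lower bound, which an arbitrary $W^{\ell,\infty}$ extension need not do. Since $\ell \ge 1$ here, the coefficients of the limit form are Lipschitz, so the coercivity inequality persists (with constant $c_a/2$, say) on a sufficiently small neighborhood $U$ of $\overline N$ for any such extension; taking $\operatorname{supp}\chi \subset U$ repairs this, and the rest of your argument goes through.
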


\begin{proof}
Let $(D_n, C_n) \in S$ converge to $(D, C) \in \maD^{\ell, j}
(B^m_{r'}(0) \times [0, r'); E)$, with $C_n = (e_0, e_1
  \pa_{\nu}^{a_n} + Q_n)$, with $Q_n + Q_n^*$ of order zero. Then $D$
  satisfies the strong Legendre condition because the parameter $c_a$
  in the definition of the strong Legendre condition (the fact
that $a$ is strongly coercive) is assumed to stay away from 0 on $S$.
The limit of Robin boundary conditions is again a Robin boundary
condition and the condition that $Q + Q^*$ be scalar is also preserved
under limits.  Corollary~\ref{cor.RobinSL} then gives that $(D, C)$
satisfies an $H^{\ell+1}$-regularity estimate.  This allows us again
to use Proposition~\ref{prop.limit.reg} for the relatively compact
subset $N \define B^m_{r'}(0) \times [0, r')$ of $M \define B^m_{r}(0)
  \times [0, r)$ to obtain the result.
\end{proof}

We are ready now to prove the result stated in the Introduction,
Theorem~\ref{thm.intro.reg}. 

\begin{proof}
The proof of Theorem~\ref{thm.intro.reg} is the same as that of
Theorem~\ref{thm.reg.D}. Indeed Corollaries \ref{cor.DR} and
\ref{cor.unif.elliptic} show that the assumptions of Theorem
\ref{thm.reg0} are satisfied. That theorem immediately gives our
result.
\end{proof}

Notice that in the statement of the theorem, we have dropped the
condition that $B$ be of constant order (it is, nevertheless, of
\emph{locally constant} order).

Let us assume now that we have a partition of the boundary $\pa M =
\pa_D M \sqcup \pa_R M$ as a disjoint union of two open and closed
subsets and that $F_0 = E\vert_{\pa_D M}$ and $F_1 = E\vert_{\pa_R
  M}$. By combining Theorem~\ref{thm.wp} with the Poincar\'e
inequality \cite{AGN3}, we can prove following well-posedness result
for the mixed Dirichlet/Robin boundary value problem. Let $A \subset
\pa M$ (see \cite{AGN3} for details). Recall that we say that $(M, A)$
has finite width \cite{AGN1} if the distance to $A$ is bounded
uniformly on $M$ and $A$ intersects all connected components of $M$.

\begin{theorem}\label{thm.well-posedness1}
We use the same notation as in Theorem~\ref{thm.intro.reg}, in
particular, $M$ is a manifold with boundary and bounded geometry and
$\tilde P = (P, \pa_{\nu}^{a} u + Q)$ has coefficients in $W^{\ell,
  \infty}$ and satisfies the strong Legendre condition. Assume that
$F_0 = E\vert_{\pa_D M}$, that $F_1 = E\vert_{\pa_R M}$, that $Q + Q^*
\geq 0$, that there exists $\epsilon > 0$ and an open and closed
subset $\pa_{PR}M \subset \pa_{R}M$ such that $Q + Q^* \geq \epsilon$
on $\pa_{PR}M$, and, finally, that $(M, \pa_D M \cup \pa_{PR}M)$ has
finite width. Then the boundary value problem
\begin{align}\label{eq.mixed.Neumann}
  \left\{ 
  \begin{aligned} P u & = f \in H^{k-1}(M; E) && \text{in } M\\
  u & = h_D \in H^{k+1/2}(\pa_D M; E) && \text{on } \pa_D M\\
  \pa_{\nu}^{a} u + Qu & = h_R \in H^{k-1/2}(\pa_R M; E) && \text{on }
  \pa_{R} M ,
  \end{aligned} \right. 
\end{align}
has a unique solution $u \in H^{k+1}(M; E)$, $k\geq 0$, and this
solution depends continuously on the data, $0 \leq k \leq \ell$.
\end{theorem}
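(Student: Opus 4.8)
The plan is to produce the solution first in the energy space $V \define \{u \in H^1(M;E)\mid e_0 u = 0 \text{ on }\pa M\}$ via the Lax--Milgram lemma, and then to bootstrap its regularity using Theorem~\ref{thm.wp}. Write $B\colon V\times V\to\CC$ and $\tilde P\colon V\to V^*$ for the sesquilinear form and the divergence-form operator attached to the data, as in Section~\ref{sec.variational}; under the standing hypotheses $F_0 = E\vert_{\pa_D M}$ and $F_1 = E\vert_{\pa_R M}$, membership in $V$ simply means that $u$ vanishes on $\pa_D M$, and the boundary value problem \eqref{eq.mixed.Neumann} is, by Lemma~\ref{lemma.e0e1}, the problem $\tilde P u = j_{k-1}(f,h_R)$ together with $u\vert_{\pa_D M}=h_D$.

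The heart of the matter is to show that $B$ is \emph{strongly coercive} on $V$: there is $\gamma>0$ with $\Re B(u,u)\ge\gamma\|u\|_{H^1(M;E)}^2$ for all $u\in V$. The strong Legendre condition gives $\Re B_a(u,u)\ge c_a\,|u|_{H^1(M;E)}^2$. For $u\in V$ the trace of $u$ is a section of $F_1$ over $\pa_R M$, so the boundary term satisfies $\Re(Qu,u)_{L^2(\pa M)} = \tfrac12((Q+Q^*)u,u)_{L^2(\pa_R M)}\ge \tfrac{\epsilon}{2}\|u\|_{L^2(\pa_{PR}M)}^2\ge 0$, using $Q+Q^*\ge 0$ on $\pa_R M$ and $Q+Q^*\ge\epsilon$ on $\pa_{PR}M$. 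The remaining first- and zeroth-order contributions to $\Re B(u,u)$ are bounded by a multiple of $\|u\|_{H^1}\|u\|_{L^2}$ and absorbed by Young's inequality into $\tfrac{c_a}{2}|u|_{H^1}^2$ plus a term controlled by $\|u\|_{L^2(M)}^2$. Finally, since $(M,\pa_D M\cup\pa_{PR}M)$ has finite width and $u$ vanishes on $\pa_D M$, the Poincar\'e inequality of \cite{AGN3} bounds $\|u\|_{L^2(M)}^2$ by a constant times $|u|_{H^1(M)}^2+\|u\|_{L^2(\pa_{PR}M)}^2$, which are exactly the two quantities already bounded from below; collecting the estimates yields the claimed $\gamma$.

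By this coercivity and Lax--Milgram, $\tilde P\colon V\to V^*$ is an isomorphism, which already settles the case $k=0$. For general $h_D$, use the continuous right inverse of the trace map (Theorem~\ref{thm.trace}) to pick $u_D\in H^{k+1}(M;E)$ with $u_D\vert_{\pa_D M}=h_D$ and $\|u_D\|_{H^{k+1}}\le C\|h_D\|_{H^{k+1/2}}$; then $w\define u-u_D$ must lie in $V$ and solve $\tilde P w = F$ with $F\define j_{k-1}(f,h_R)-\tilde P u_D\in V^*$, and, since $\tilde P$ has bounded coefficients, $\|F\|_{V^*}\le C(\|f\|_{H^{k-1}}+\|h_R\|_{H^{k-1/2}}+\|h_D\|_{H^{k+1/2}})$. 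Hence there is a unique such $w$, and $u\define w+u_D$ solves \eqref{eq.mixed.Neumann} weakly with $\|u\|_{H^1}\le C(\|f\|_{H^{k-1}}+\|h_D\|_{H^{k+1/2}}+\|h_R\|_{H^{k-1/2}})$; uniqueness is immediate, as the difference of two solutions lies in $V$ and is annihilated by the injective $\tilde P$. Since the strong Legendre condition implies uniform strong ellipticity, $\zeta\define a(X_1,X_1)$ (with $X_1$ the unit normal field of Lemma~\ref{lemma.prop.exists}) is invertible with bounded inverse, so the hypotheses of Theorem~\ref{thm.wp} hold and $(P,e_0,e_1\pa_{\nu}^{a}+Q)$ satisfies an $H^{\ell+1}$-regularity estimate on $M$. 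Applying this estimate to $u$ for each $0\le k\le\ell$ and inserting the $H^1$-bound just obtained in place of the $\|u\|_{H^1}$ term gives $u\in H^{k+1}(M;E)$ and the asserted continuous dependence.

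The only genuinely delicate step is the coercivity argument: one must invoke the Poincar\'e inequality in precisely the form whose boundary term matches the positivity supplied by $Q+Q^*$ on $\pa_{PR}M$, and then verify that the combination of the strong-Legendre gradient bound with the Poincar\'e-controlled $L^2$-norm dominates the lower-order terms of $\tilde P$ — this balance is exactly what the finite-width hypothesis on $(M,\pa_D M\cup\pa_{PR}M)$ is there to provide. The remaining ingredients (the Dirichlet lift, the continuity of $j_{k-1}$, and the bootstrap through Theorem~\ref{thm.wp}) are routine once coercivity is in hand.
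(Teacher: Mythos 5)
Your route is exactly the one the paper indicates: the paper offers no detailed proof of Theorem~\ref{thm.well-posedness1}, only the one-sentence instruction ``combine Theorem~\ref{thm.wp} with the Poincar\'e inequality of \cite{AGN3},'' and your proposal is a faithful elaboration of that plan (coercivity on $V$ from the strong Legendre condition, the positivity of $Q+Q^*$ on $\pa_{PR}M$, and the finite-width Poincar\'e inequality; Lax--Milgram; a trace lifting for $h_D$; then the regularity bootstrap through Theorem~\ref{thm.wp}). The lifting of $h_D$, the continuity of $j_{k-1}$, the identification of $\zeta=a(X_1,X_1)$ as invertible with bounded inverse, and the iteration of the regularity estimate down to the $H^1$ bound are all handled correctly; for the last step you could equally well have quoted Theorem~\ref{thm.intro.reg} directly, whose hypotheses are satisfied here.

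The one step that does not close as written is the absorption of the first- and zeroth-order terms in the coercivity estimate. After Young's inequality these contribute $\delta\,|u|_{H^1}^2 + C_\delta\,\|u\|_{L^2(M)}^2$ with $C_\delta$ \emph{large} when $\delta$ is small, and the Poincar\'e inequality then returns $C_\delta\,C_P\bigl(|u|_{H^1}^2+\|u\|_{L^2(\pa_{PR}M)}^2\bigr)$ with a fixed constant $C_P$; this cannot be dominated by the fixed lower bounds $c_a|u|_{H^1}^2$ and $\tfrac{\epsilon}{2}\|u\|_{L^2(\pa_{PR}M)}^2$, so ``collecting the estimates'' fails for a general first-order perturbation $Q_1$. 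You should either read the hypothesis $\tilde P=(P,\pa_\nu^a u+Q)$ as excluding the perturbation $Q_1$ of Theorem~\ref{thm.intro.reg} (in which case your chain of inequalities is complete: $\Re B(u,u)\ge\min(c_a,\epsilon/2)\bigl(|u|_{H^1}^2+\|u\|_{L^2(\pa_{PR}M)}^2\bigr)\ge\gamma\|u\|_{H^1}^2$ by Poincar\'e), or impose a smallness or sign condition on $Q_1$; with an arbitrary $Q_1$ one only obtains a G{\aa}rding inequality, hence invertibility of $\tilde P+\lambda$ for $\Re\lambda$ large rather than of $\tilde P$ itself, and the unique solvability claim would be false (the statement in the paper shares this ambiguity). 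Everything else in your argument is sound.
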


Note that the more general form of the Robin boundary conditions
considered in this paper (i.e. corresponding to a splitting of $E$ at
the boundary, is useful for treating the Hodge-Laplacian. See also
\cite{MMMT16, Taylor2}. See \cite{Taylor1} also for results on the
Robin problem on smooth domains. See \cite{Daners2000,
  gesztesyMitrea2009} for some results on the Robin problem on
non-smooth domains. See also \cite{AmariElliptic}.

\subsection{The bounded geometry of the boundary
  is needed}\label{ssec.saw}
  
Let us provide now an example of a manifold $\Omega$ with smooth
metric and smooth boundary $\pa \Omega$ that satisfies the Poincar\'e
inequality, and hence such that $\Delta \colon H^1_0(\Omega) \to
H^{-1}(\Omega)$ is an isomorphism, but such that $\Delta \colon
H^2(\Omega) \cap H^1_0(\Omega) \to L^2(\Omega)$ is not onto. In other
words, the operator $D = \Delta$ with Dirichlet boundary condition
does not satisfy an $H^2$-regularity estimate on $\Omega$, in spite of
the fact that it has coefficients in $W^{\infty, \infty}$. Our example
is based on the loss of regularity for problems on concave polygonal
domains.

Let $G \subset \RR^n$ be a bounded domain with \emph{Lipschitz}
boundary. It will be convenient to consider only \emph{closed}
domains. Let $m \in \NN$ (we shall need only the case $m=1$). We
denote by $H^m(G)$ the space of functions with $m$ derivatives in
$L^2$. It is the set of restrictions of functions from $H^m(\RR^n)$ to
$G$. We let $H^m_G(\RR^n)$ be the set of distributions in $H^m(\RR^n)$
with support in $G = \overline{G}$ and $H^m_0(G)$ be the closure of
the set of test functions with support in $G$ in $H^m(\RR^n)$, as
usual. It is known that $H^1_G(\RR^n) = H^1_0(G) = \ker(H^1(G) \to
L^2(G))$. See \cite{Taylor1} for the case $G$ smooth. The Lipschitz
case is completely similar and follows from $H^1_0(G) = \ker(H^1(G)
\to L^2(G))$, see, for example \cite{DorinaMitrea}.

Our construction of the manifold with boundary $\Omega$ is based on
the following lemma.

\begin{lemma}
Let $G$ and $\omega_1 \supset \omega_2 \supset \ldots \supset \omega_n
\supset \ldots$ be \emph{closed}, bounded domains in $\RR^n$ with
Lipschitz boundary such that $G := \cap \omega_n$. Let $f \in L^2(G)$,
$u_n \in H^1_0(\omega_n)$, and $w \in H^1_0(G)$ satisfy $\Delta u_n =
f$ on $\omega_n$ and $ \Delta w=f$ on $G$. If the domains $\omega_n$
have smooth boundary and $w \notin H^2(G)$, then
$\|u_n\|_{H^2(\omega_n)} \to \infty$.
\end{lemma}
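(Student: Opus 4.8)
The plan is to argue by contradiction, assuming that $\|u_n\|_{H^2(\omega_n)}$ stays bounded along a subsequence, and to derive that $w \in H^2(G)$, contradicting the hypothesis. First I would fix a large ball $B \supset \omega_1$ and regard all the functions as living on $B$, extending $u_n$ by zero outside $\omega_n$; since $u_n \in H^1_0(\omega_n)$, these zero-extensions lie in $H^1_0(\omega_n) \subset H^1(B)$, and the extensions of $u_n$ into $H^2$ are controlled by $\|u_n\|_{H^2(\omega_n)}$ up to a fixed constant (here the smoothness of $\partial \omega_n$ is used, but note one would need the extension operators to be \emph{uniformly} bounded — see the obstacle below). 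Assuming $\sup_n \|u_n\|_{H^2(\omega_n)} < \infty$, after passing to a subsequence we get $u_n \rightharpoonup v$ weakly in $H^2(B)$ for some $v \in H^2(B)$, and by Rellich $u_n \to v$ strongly in $H^1_{\mathrm{loc}}$.

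Next I would identify the weak limit $v$ with $w$ on $G$. Since $G = \cap_n \omega_n$ and the $\omega_n$ are decreasing, for any test function $\varphi$ supported in the interior of $G$ we have $\varphi$ supported in every $\omega_n$ for $n$ large, so $\int \nabla u_n \cdot \nabla \varphi = \int f \varphi$ passes to the limit to give $\Delta v = f$ on the interior of $G$. To see that $v \in H^1_0(G)$, I would use the characterization $H^1_0(G) = \ker(H^1(G) \to L^2(G))$ recalled just before the lemma (i.e. $H^1_0(G) = H^1_G(\RR^n)$): each $u_n$, extended by zero, is supported in $\omega_n$, hence $v$ (the $H^1$-limit) is supported in $\cap_n \overline{\omega_n} = G$, so $v|_G \in H^1_G(\RR^n) = H^1_0(G)$. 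Then $v|_G$ and $w$ both solve the Dirichlet problem $\Delta(\cdot) = f$ on $G$ with zero boundary values, and since $\Delta\colon H^1_0(G) \to H^{-1}(G)$ is injective (Poincaré on the bounded domain $G$), we conclude $w = v|_G$. But $v \in H^2(B)$, so $w = v|_G \in H^2(G)$, contradicting the assumption $w \notin H^2(G)$.

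The main obstacle is the uniformity of the $H^1_0(\omega_n) \to H^2$-type bounds and the extension/restriction operators as $n \to \infty$: a priori the Sobolev extension constants for the smooth domains $\omega_n$ could blow up (indeed they must degenerate in the limit, since $G$ is only Lipschitz and the whole point is that regularity fails on $G$). The resolution is that we do \emph{not} actually need uniform extension bounds: we only need that the zero-extension of $u_n \in H^1_0(\omega_n)$ to $B$ has $H^2(B)$-norm comparable to $\|u_n\|_{H^2(\omega_n)}$, and for $H^1_0$-functions on a domain with Lipschitz (even just, say, continuous) boundary the zero extension is automatically in $H^1(B)$ with the \emph{same} norm, while the second-derivative bound comes from the \emph{interior} estimate plus the fact that $u_n$ vanishes in a neighborhood of $\partial B$; more carefully, one works with $\|u_n\|_{H^1(B)}$ (uniformly bounded since $\|u_n\|_{H^1_0(\omega_n)}^2 \lesssim (f, u_n)_{L^2}$ via Poincaré on $\omega_1 \supset \omega_n$, with a \emph{fixed} Poincaré constant) together with the hypothesis $\sup_n \|u_n\|_{H^2(\omega_n)} < \infty$ applied only after extending by zero, which for an $H^1_0$ function on a Lipschitz domain genuinely does land in $H^2$ of the ambient space with controlled norm. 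Thus the contradiction hypothesis itself supplies the uniform $H^2(B)$-bound, and the weak-compactness argument goes through. I would write the details of this extension step carefully, as it is the one place where the smoothness of the $\omega_n$ (versus the mere Lipschitz regularity of $G$) genuinely enters.
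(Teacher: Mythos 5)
Your overall strategy (argue by contradiction, extract a weak $H^2$ limit, identify it with $w$) is the right one, but the step you yourself single out as delicate --- extending $u_n$ by zero to a ball $B$ and claiming the extension lies in $H^2(B)$ with norm controlled by $\|u_n\|_{H^2(\omega_n)}$ --- is false, and it is the load-bearing step of your argument. For $u\in H^1_0(\omega)\cap H^2(\omega)$ the zero extension $\tilde u$ is indeed in $H^1(\RR^n)$ with the same norm, and $\nabla\tilde u$ is the zero extension of $\nabla u$; but that extension belongs to $H^1(\RR^n)$ only if $\nabla u$ has vanishing trace on $\pa\omega$, i.e.\ only if the normal derivative $\pa_\nu u$ vanishes there. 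Generically it does not: take $\omega$ the closed unit ball and $u(x)=1-|x|^2$, so $u\in H^1_0(\omega)\cap C^\infty(\omega)$, yet $\nabla\tilde u$ jumps across $|x|=1$ and $D^2\tilde u$ contains a surface-measure term, so $\tilde u\notin H^2(B)$. No interior estimate rescues this, since the failure occurs exactly on $\pa\omega_n$. Hence the contradiction hypothesis does not supply a uniform $H^2(B)$ bound, and the weak $H^2(B)$ limit $v$ on which your conclusion $w=v|_G\in H^2(G)$ rests need not exist.

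The repair is to go in the opposite direction, which is what the paper does: since $G\subset\omega_n$, restriction gives $\|u_n\vert_G\|_{H^2(G)}\le\|u_n\|_{H^2(\omega_n)}$ for free, so under the contradiction hypothesis $u_n\vert_G$ has a weak limit $\tilde w$ in $H^2(G)$ along a subsequence, and it only remains to show $\tilde w=w$. Your identification of the limit (zero extensions uniformly bounded in $H^1$ by Poincar\'e on $\omega_1$, strong $L^2$ convergence by Rellich, support of the limit contained in $\cap_n\omega_n=G$, hence the limit lies in $H^1_G(\RR^n)=H^1_0(G)$ and solves the Dirichlet problem, hence equals $w$ by uniqueness) is sound and could be kept for this purpose; the paper instead obtains $u_n\to w$ strongly in $H^1$ more directly, by observing that $u_n$ is the orthogonal projection of $u_1$ onto the nested closed subspaces $H^1_0(\omega_n)$ for the Dirichlet inner product $B(u,v)=\int\nabla u\cdot\nabla\bar v$ and that $\cap_n H^1_0(\omega_n)=H^1_0(G)$. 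Either way, combining $u_n\vert_G\to w$ in $L^2(G)$ with $u_n\vert_G\rightharpoonup\tilde w$ in $H^2(G)$ forces $w=\tilde w\in H^2(G)$, the desired contradiction.
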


\begin{proof}
We have that
\begin{equation*}
  H^1_0(\omega_1) \supset H^1_0(\omega_2) \supset \ldots \supset
  H^1_0(\omega_n) \ldots \supset H^1_0(G).
\end{equation*}
Let $\xi \in \cap_{n=1}^\infty H^1_0(\omega_n)$. Then $\xi$ has
support in $G = \cap_{n=1}^\infty \omega_n$, and hence $\xi$ has
support in $G$, which gives $\xi \in H^1_G(\RR^n) = H^1_0(G)$, by the
discussion preceding this lemma. This shows that $H^1(G) =
\cap_{n=1}^\infty H^1_0(\omega_n)$. Let $B(u, v) \define \int_{\RR^n}
(\nabla u, \nabla v) d\vol$. Then $B$ induces an inner product on
$H^1_0(\omega_1)$ equivalent to the initial inner product. Moreover,
the relations
\begin{equation*}
  B(u_n, v) = (f, v) = B(u_{n+1}, v), \quad v \in H^1_0(\omega_{n+1})
\end{equation*}
show that $u_{n+1}$ is the $B$-orthogonal projection of $u_n$ onto
$H^1_0(\omega_{n+1})$. Similarly, $w$ is the $B$-orthogonal projection
of $u_n$ onto $H^1_0(G)$. Since $H^1(G) = \cap_{n=1}^\infty
H^1_0(\omega_n)$, we obtain that $u_n \to w$ in $H^1_0(\omega_1)$.

To prove that $\|u_n\|_{H^2(\omega_n)} \to \infty$ if $w \notin
H^2(G)$, we shall proceed by contradiction. Let us assume then that
this is not the case. Then, by passing to a subsequence, we may assume
that $\|u_n\|_{H^2(\omega_n)}$ is bounded. Then $\|u_n \vert_G
\|_{H^2(G)} \le \|u_n\|_{H^2(\omega_n)}$ also forms a bounded
sequence. By passing to a subsequence again, we may assume then that
$u_n \vert_G$ converges weakly in $H^2(G)$ to some $\tilde w$, by the
Alaoglu-Bourbaki theorem. Hence $u_n\vert_G \to \tilde w$ weakly in
$H^1(G)$ (even in norm, since $H^2(G) \to H^1(G)$ is compact). We have
$\|u_n \vert_G - w\|_{H^1(G)} \le \|u_n - w\|_{H^1(\omega_1)} \to
0$. Hence $u_n\vert_G \to w$ in $H^1(G)$. Consequently, $\tilde w =
w$, which is a contradiction, since we have assumed that $w \notin
H^2(G)$.
\end{proof}

We are ready now to construct our manifold $\Omega$. Let $G$ be a
bounded domain whose boundary is smooth, except at one point, where we
have an angle $>\pi$ (so $G$ is \emph{not} convex). It is known then
that there exists $u \in H^1_0(G)$, $u \notin H^2(G)$ such that $f :=
\Delta u \in L^2(M)$. See \cite{DaugeBook, Grisvard1, Grisvard2,
  NP}. In fact, the space of functions $\phi \in L^2(W)$ such that
$\Delta^{-1} \phi \in H^2(G) \cap H^1_0(G)$ is of codimension one in
$L^2(G)$. Let $\ldots \subset \omega_{n+1} \subset \omega_n \subset
\ldots \subset \omega_1$ be a sequence of \emph{closed}, smooth,
bounded domains whose intersection is $G$. Then we can take for
$\Omega$ the (disjoint) union of all the domains $\omega_n \times
\{n\}$, $n \in \NN$.

Let us check that $\Omega$ has the desired properties. We construct
$\phi \in L^2(\Omega)$ by taking $\phi \define c_n f$ on $\omega_n$,
$c_n > 0$. Let $u_n \in H^1_0(\omega_n)$ be the unique solution of
$\Delta u_n = f$. We can choose $c_n$ such that $\sum_n c_n^2 <
\infty$, and hence $\phi \in L^2(\Omega)$, but $\sum_n c_n^2
\|u_n\|_{H^2(\omega_n)}^2 = \infty$, since the sequence
$\|u_n\|_{H^2(\omega_n)}$ is unbounded, by the last lemma. We have
that $\Omega \subset \omega_1 \times \NN$, and hence it satisfies the
Poincar\'e inequality for all functions in $H^1_0(\Omega)$. Let $u \in
H^1_0(\Omega)$ be the unique solution of the equation $\Delta u =
\phi$ \cite{AGN1}. Then $u = c_n u_n$ on $\omega_n$, by the uniqueness
of $u_n$, and hence $\|u\|^2_{H^2(\Omega)} = \sum_{n=1}^\infty c_n^2
\|u_n\|^2_{H^2(\omega_n)} = \infty$. That is, $u \notin H^2(\Omega)$.
Note that $\Omega$ is \emph{not} of bounded geometry: indeed, the
second fundamental form of $\omega_n$ cannot be uniformly bounded in
$n$, since the ``limit'' $G$ of the domains $\omega_n$ has a corner.

\def\cprime{$'$}


\begin{thebibliography}{10}

\bibitem{AgmonCoercive}
\textsc{Agmon, S.}
\newblock The coerciveness problem for integro-differential forms.
\newblock \emph{J. Analyse Math. 6\/} (1958), 183--223.

\bibitem{ADN1}
\textsc{Agmon, S., Douglis, A., and Nirenberg, L.}
\newblock Estimates near the boundary for solutions of elliptic partial
  differential equations satisfying general boundary conditions. {I}.
\newblock \emph{Comm. Pure Appl. Math. 12\/} (1959), 623--727.

\bibitem{AgranovichBook}
\textsc{Agranovich, M.~S.}
\newblock Elliptic boundary problems.
\newblock In \emph{Partial differential equations, {IX}}, vol.~79 of {\em
  Encyclopaedia Math. Sci.} Springer, Berlin, 1997, pp.~1--144, 275--281.
\newblock Translated from the Russian by the author.

\bibitem{Agranovich07}
\textsc{Agranovich, M.~S.}
\newblock On the theory of {D}irichlet and {N}eumann problems for linear
  strongly elliptic systems with {L}ipschitz domains.
\newblock \emph{Funktsional. Anal. i Prilozhen. 41}, 4 (2007), 1--21, 96.

\bibitem{AF}
\textsc{Agricola, I., and Friedrich, T.}
\newblock \emph{Global analysis}, vol.~52 of \emph{Graduate Studies in
  Mathematics}.
\newblock American Mathematical Society, Providence, RI, 2002.
\newblock Differential forms in analysis, geometry and physics, Translated from
  the 2001 German original by Andreas Nestke.

\bibitem{AmannParab}
\textsc{Amann, H.}
\newblock Parabolic equations on uniformly regular {R}iemannian manifolds and
  degenerate initial boundary value problems.
\newblock In \emph{Recent developments of mathematical fluid mechanics}, Adv.
  Math. Fluid Mech. Birkh\"{a}user/Springer, Basel, 2016, pp.~43--77.

\bibitem{AmannCauchy}
\textsc{Amann, H.}
\newblock Cauchy problems for parabolic equations in {S}obolev-{S}lobodeckii
  and {H}\"{o}lder spaces on uniformly regular {R}iemannian manifolds.
\newblock \emph{J. Evol. Equ. 17}, 1 (2017), 51--100.

\bibitem{AmariElliptic}
\textsc{Amar, E.}
\newblock The {LIR} method. ${L}^r$ solutions of elliptic equation in a
  complete riemannian manifold.
\newblock \href{https://arxiv.org/abs/1803.07811}{arxiv:1803.07811}.

\bibitem{AGN3}
\textsc{Ammann, B., Gro{\ss}e, N., and Nistor, V.}
\newblock The strong {L}egendre condition and the well-posedness of mixed
  {R}obin-{D}irichlet problems on manifolds with bounded geometry.
\newblock Preprint \href{https://arxiv.org/abs/1810.06926}{arXiv:1810.06926}.

\bibitem{AGN1}
\textsc{Ammann, B., Gro{\ss}e, N., and Nistor, V.}
\newblock Well-posedness of the {L}aplacian on manifolds with boundary and
  bounded geometry.
\newblock \emph{Math. Nach.} 2019; 1--25. \url{https://doi.org/10.1002/mana.201700408} (Online First)
.

\bibitem{sobolev}
\textsc{Ammann, B., Ionescu, A.~D., and Nistor, V.}
\newblock Sobolev spaces on {L}ie manifolds and regularity for polyhedral
  domains.
\newblock \emph{Doc. Math. 11\/} (2006), 161--206 (electronic).

\bibitem{aln1}
\textsc{Ammann, B., Lauter, R., and Nistor, V.}
\newblock On the geometry of {R}iemannian manifolds with a {L}ie structure at
  infinity.
\newblock \emph{Int. J. Math. Math. Sci.}, 1-4 (2004), 161--193.

\bibitem{Aubin}
\textsc{Aubin, T.}
\newblock Espaces de sobolev sur les vari\'et\'es riemanniennes.
\newblock \emph{Bull. Sc. math. 100\/} (1970), 149--173.

\bibitem{BHN}
\textsc{Bacuta, C., Li, H., and Nistor, V.}
\newblock Differential operators on domains with conical points: precise
  uniform regularity estimates.
\newblock HAL Preprint, to appear in Revue Romaine de Math\'ematiques Pures et
  Appliqu\'es, 2016.

\bibitem{BaerGinoux}
\textsc{B{\"a}r, C., and Ginoux, N.}
\newblock Classical and quantum fields on {L}orentzian manifolds.
\newblock In \emph{Global differential geometry}, vol.~17 of \emph{Springer Proc.
  Math.} Springer, Heidelberg, 2012, pp.~359--400.

\bibitem{BaerStrohmaier}
\textsc{B\"{a}r, C., and Strohmaier, A.}
\newblock A rigorous geometric derivation of the chiral anomaly in curved
  backgrounds.
\newblock \emph{Comm. Math. Phys. 347}, 3 (2016), 703--721.

\bibitem{BaerWafo}
\textsc{B{\"a}r, C., and Wafo, R.}
\newblock Initial value problems for wave equations on manifolds.
\newblock \emph{Math. Phys. Anal. Geom. 18}, 1 (2015), Art. 7, 29.

\bibitem{KarstenCR}
\textsc{Bohlen, K.}
\newblock Boutet de {M}onvel operators on singular manifolds.
\newblock \emph{C. R. Math. Acad. Sci. Paris 354}, 3 (2016), 239--243.

\bibitem{KarstenBMB}
\textsc{Bohlen, K.}
\newblock Boutet de {M}onvel operators on {L}ie manifolds with boundary.
\newblock \emph{Adv. Math. 312\/} (2017), 234--285.

\bibitem{BrezisSobolev}
\textsc{Brezis, H.}
\newblock \emph{Functional analysis, {S}obolev spaces and partial differential
  equations}.
\newblock Universitext. Springer, New York, 2011.

\bibitem{Thalmaier18}
\textsc{Cheng, L.-J., and Thalmaier, A.}
\newblock Characterization of pinched {R}icci curvature by functional
  inequalities.
\newblock \emph{J. Geom. Anal. 28}, 3 (2018), 2312--2345.

\bibitem{Daners2000}
\textsc{Daners, D.}
\newblock Robin boundary value problems on arbitrary domains.
\newblock \emph{Trans. Amer. Math. Soc. 352}, 9 (2000), 4207--4236.

\bibitem{DaugeBook}
\textsc{Dauge, M.}
\newblock \emph{Elliptic boundary value problems on corner domains}, vol.~1341
  of \emph{Lecture Notes in Mathematics}.
\newblock Springer-Verlag, Berlin, 1988.
\newblock Smoothness and asymptotics of solutions.

\bibitem{ElderingCR}
\textsc{Eldering, J.}
\newblock Persistence of noncompact normally hyperbolic invariant manifolds in
  bounded geometry.
\newblock \emph{C. R. Math. Acad. Sci. Paris 350}, 11-12 (2012), 617--620.

\bibitem{ElderingThesis}
\textsc{Eldering, J.}
\newblock \emph{Normally hyperbolic invariant manifolds}, vol.~2 of {\em
  Atlantis Studies in Dynamical Systems}.
\newblock Atlantis Press, Paris, 2013.
\newblock The noncompact case.

\bibitem{EngelThesis}
\textsc{Engel, A.}
\newblock Indices of pseudodifferential operators on open manifolds.
\newblock Ph.D. thesis, Augsburg,
  \href{https://arxiv.org/abs/1410.8030}{arXiv:1410.8030}, 2014.

\bibitem{EngelLongArt}
\textsc{Engel, A.}
\newblock Index theorems for uniformly elliptic operators.
\newblock \emph{New York J. Math. 24\/} (2018), 543--587.

\bibitem{EvansBook}
\textsc{Evans, L.}
\newblock \emph{Partial differential equations}, second~ed., vol.~19 of {\em
  Graduate Studies in Mathematics}.
\newblock American Mathematical Society, Providence, RI, 2010.

\bibitem{gerardRR}
\textsc{G\'erard, C.}
\newblock On the {H}artle-{H}awking-{I}srael states for spacetimes with static
  bifurcate {K}illing horizons.
\newblock To appear in \emph{Revue Roumaine des Mathematiques Pures et
  Appliqu\'ees}.

\bibitem{GerardBG}
\textsc{G\'erard, C., Omar, O., and Wrochna, M.}
\newblock Hadamard states for the {K}lein-{G}ordon equation on {L}orentzian manifolds
  of bounded geometry.
\newblock  \emph{Comm. Math. Phys.} 352 (2017), no. 2, 519–583.

\bibitem{GerardWrochna}
\textsc{G{\'e}rard, C., and Wrochna, M.}
\newblock Hadamard states for the linearized {Y}ang-{M}ills equation on curved
  spacetime.
\newblock \emph{Comm. Math. Phys. 337}, 1 (2015), 253--320.

\bibitem{gesztesyMitrea2009}
\textsc{Gesztesy, F., and Mitrea, M.}
\newblock Nonlocal {R}obin {L}aplacians and some remarks on a paper by
  {F}ilonov on eigenvalue inequalities.
\newblock \emph{J. Differential Equations 247}, 10 (2009), 2871--2896.

\bibitem{TrudingerBook}
\textsc{Gilbarg, D., and Trudinger, N.}
\newblock \emph{Elliptic partial differential equations of second order}.
\newblock Classics in Mathematics. Springer-Verlag, Berlin, 2001.
\newblock Reprint of the 1998 edition.

\bibitem{Grisvard2}
\textsc{Grisvard, P.}
\newblock \emph{Singularities in boundary value problems}, vol.~22 of {\em
  Recherches en Math\'{e}matiques Appliqu\'{e}es [Research in Applied
  Mathematics]}.
\newblock Masson, Paris; Springer-Verlag, Berlin, 1992.

\bibitem{Grisvard1}
\textsc{Grisvard, P.}
\newblock \emph{Elliptic problems in nonsmooth domains}, vol.~69 of {\em
  Classics in Applied Mathematics}.
\newblock Society for Industrial and Applied Mathematics (SIAM), Philadelphia,
  PA, 2011.
\newblock Reprint of the 1985 original [ MR0775683], With a foreword by Susanne
  C. Brenner.

\bibitem{GrosseSchneider}
\textsc{Gro{\ss}e, N., and Schneider, C.}
\newblock Sobolev spaces on {R}iemannian manifolds with bounded geometry:
  general coordinates and traces.
\newblock \emph{Math. Nachr. 286}, 16 (2013), 1586--1613.

\bibitem{HebeyBook}
\textsc{Hebey, E.}
\newblock \emph{Sobolev spaces on {R}iemannian manifolds}, vol.~1635 of {\em
  Lecture Notes in Mathematics}.
\newblock Springer-Verlag, Berlin, 1996.

\bibitem{Hebey1}
\textsc{Hebey, E., and Robert, F.}
\newblock Sobolev spaces on manifolds.
\newblock In \emph{Handbook of global analysis}. Elsevier Sci. B. V., Amsterdam,
  2008, pp.~375--415, 1213.

\bibitem{Hormander3}
\textsc{H{\"o}rmander, L.}
\newblock \emph{The analysis of linear partial differential operators. {III}}.
\newblock Classics in Mathematics. Springer, Berlin, 2007.
\newblock Pseudo-differential operators, Reprint of the 1994 edition.

\bibitem{JostBook}
\textsc{Jost, J.}
\newblock \emph{Partial differential equations}, second~ed., vol.~214 of {\em
  Graduate Texts in Mathematics}.
\newblock Springer, New York, 2007.

\bibitem{SchrohePhysics}
\textsc{Junker, W., and Schrohe, E.}
\newblock Adiabatic vacuum states on general spacetime manifolds: definition,
  construction, and physical properties.
\newblock \emph{Ann. Henri Poincar\'e 3}, 6 (2002), 1113--1181.

\bibitem{KohrPinteaWendland13}
\textsc{Kohr, M., Pintea, C., and Wendland, W.}
\newblock Layer potential analysis for pseudodifferential matrix operators in
  {L}ipschitz domains on compact {R}iemannian manifolds: applications to
  pseudodifferential {B}rinkman operators.
\newblock \emph{Int. Math. Res. Not. IMRN}, 19 (2013), 4499--4588.

\bibitem{KohrWendland18}
\textsc{Kohr, M., and Wendland, W.}
\newblock Variational approach for the {S}tokes and {N}avier--{S}tokes systems
  with nonsmooth coefficients in {L}ipschitz domains on compact {R}iemannian
  manifolds.
\newblock \emph{Calc. Var.}, 3-4 (2018), 57--165.

\bibitem{kordyukovLp1}
\textsc{Kordyukov, Y.~A.}
\newblock {$L^p$}-theory of elliptic differential operators with bounded
  coefficients.
\newblock \emph{Vestnik Moskov. Univ. Ser. I Mat. Mekh.}, 4 (1988), 98--100.

\bibitem{kordyukovLp2}
\textsc{Kordyukov, Y.~A.}
\newblock {$L^p$}-theory of elliptic differential operators on manifolds of
  bounded geometry.
\newblock \emph{Acta Appl. Math. 23}, 3 (1991), 223--260.

\bibitem{LionsMagenes1}
\textsc{Lions, J.-L., and Magenes, E.}
\newblock \emph{Non-homogeneous boundary value problems and applications. {V}ol.
  {I}}.
\newblock Springer-Verlag, New York-Heidelberg, 1972.
\newblock Translated from the French by P. Kenneth, Die Grundlehren der
  mathematischen Wissenschaften, Band 181.

\bibitem{LionsMagenes2}
\textsc{Lions, J.-L., and Magenes, E.}
\newblock \emph{Non-homogeneous boundary value problems and applications. {V}ol.
  {II}}.
\newblock Springer-Verlag, New York-Heidelberg, 1972.
\newblock Translated from the French by P. Kenneth, Die Grundlehren der
  mathematischen Wissenschaften, Band 182.

\bibitem{Lopatinski}
\textsc{Lopatinski\u{\i}, Y.~B.}
\newblock On a method of reducing boundary problems for a system of
  differential equations of elliptic type to regular integral equations.
\newblock \emph{Ukrain. Mat. \v{Z}. 5\/} (1953), 123--151.

\bibitem{MN1}
\textsc{Mazzucato, A., and Nistor, V.}
\newblock Mapping properties of heat kernels, maximal regularity, and
  semi-linear parabolic equations on noncompact manifolds.
\newblock \emph{J. Hyperbolic Differ. Equ. 3}, 4 (2006), 599--629.

\bibitem{elasticity}
\textsc{Mazzucato, A., and Nistor, V.}
\newblock Well-posedness and regularity for the elasticity equation with mixed
  boundary conditions on polyhedral domains and domains with cracks.
\newblock \emph{Arch. Ration. Mech. Anal. 195}, 1 (2010), 25--73.

\bibitem{McLeanBook}
\textsc{McLean, W.}
\newblock \emph{Strongly elliptic systems and boundary integral equations}.
\newblock Cambridge University Press, Cambridge, 2000.

\bibitem{MMMT16}
\textsc{Mitrea, D., Mitrea, I., Mitrea, M., and Taylor, M.}
\newblock \emph{The {H}odge-{L}aplacian}, vol.~64 of \emph{De Gruyter Studies in
  Mathematics}.
\newblock De Gruyter, Berlin, 2016.
\newblock Boundary value problems on Riemannian manifolds.

\bibitem{DorinaMitrea}
\textsc{Mitrea, D., Mitrea, M., and Shaw, M.-C.}
\newblock Traces of differential forms on {L}ipschitz domains, the boundary de
  {R}ham complex, and {H}odge decompositions.
\newblock \emph{Indiana Univ. Math. J. 57}, 5 (2008), 2061--2095.

\bibitem{NazarovPopoff}
\textsc{Nazarov, S., and Popoff, N.}
\newblock {S}elf-adjoint and skew-symmetric extensions of the laplaci an with
  singular robin boundary condition.
\newblock preprint https://arxiv.org/pdf/1711.09654.pdf.

\bibitem{NP}
\textsc{Nazarov, S.~A., and Plamenevsky, B.~A.}
\newblock \emph{Elliptic problems in domains with piecewise smooth boundaries},
  vol.~13 of \emph{de Gruyter Expositions in Mathematics}.
\newblock Walter de Gruyter \& Co., Berlin, 1994.

\bibitem{NirenbergSE}
\textsc{Nirenberg, L.}
\newblock Remarks on strongly elliptic partial differential equations.
\newblock \emph{Comm. Pure Appl. Math. 8\/} (1955), 649--675.

\bibitem{OleinikBook92}
\textsc{Ole\u{\i}nik, O.~A., Shamaev, A.~S., and Yosifian, G.~A.}
\newblock \emph{Mathematical problems in elasticity and homogenization}, vol.~26
  of \emph{Studies in Mathematics and its Applications}.
\newblock North-Holland Publishing Co., Amsterdam, 1992.

\bibitem{Paltsev96}
\textsc{Pal$\prime$tsev, B.~V.}
\newblock On a mixed problem with nonhomogeneous boundary conditions for
  second-order elliptic equations with a parameter in {L}ipschitz domains.
\newblock \emph{Mat. Sb. 187}, 4 (1996), 59--116.

\bibitem{PazyBook}
\textsc{Pazy, A.}
\newblock \emph{Semigroups of linear operators and applications to partial
  differential equations}, vol.~44 of \emph{Applied Mathematical Sciences}.
\newblock Springer-Verlag, New York, 1983.

\bibitem{Pryde}
\textsc{Pryde, A.~J.}
\newblock Higher order elliptic boundary value problems in spaces with
  homogeneous norms.
\newblock \emph{J. Austral. Math. Soc. Ser. A 31}, 1 (1981), 92--113.

\bibitem{Pulemotov}
\textsc{Pulemotov, A.}
\newblock Quasilinear parabolic equations and the {R}icci flow on manifolds
  with boundary.
\newblock \emph{J. Reine Angew. Math. 683\/} (2013), 97--118.

\bibitem{SchectherGBVP}
\textsc{Schechter, M.}
\newblock General boundary value problems for elliptic partial differential
  equations.
\newblock \emph{Comm. Pure Appl. Math. 12\/} (1959), 457--486.

\bibitem{Schick2001}
\textsc{Schick, T.}
\newblock Manifolds with boundary and of bounded geometry.
\newblock \emph{Math. Nachr. 223\/} (2001), 103--120.

\bibitem{Simonett}
\textsc{Shao, Y., and Simonett, G.}
\newblock Continuous maximal regularity on uniformly regular {R}iemannian
  manifolds.
\newblock \emph{J. Evol. Equ. 14}, 1 (2014), 211--248.

\bibitem{ShubinAsterisque}
\textsc{Shubin, M.~A.}
\newblock Spectral theory of elliptic operators on noncompact manifolds.
\newblock \emph{Ast\'erisque 207\/} (1992), 5, 35--108.
\newblock M\'ethodes semi-classiques, Vol.\ 1 (Nantes, 1991).

\bibitem{Taylor1}
\textsc{Taylor, M.}
\newblock \emph{Partial differential equations {I}. {B}asic theory}, second~ed.,
  vol.~115 of \emph{Applied Mathematical Sciences}.
\newblock Springer, New York, 2011.

\bibitem{Taylor2}
\textsc{Taylor, M.}
\newblock \emph{Partial differential equations {II}. {Q}ualitative studies of
  linear equations}, second~ed., vol.~116 of \emph{Applied Mathematical
  Sciences}.
\newblock Springer, New York, 2011.

\bibitem{TriebelBG}
\textsc{Triebel, H.}
\newblock Characterizations of function spaces on a complete {R}iemannian
  manifold with bounded geometry.
\newblock \emph{Math. Nachr. 130\/} (1987), 321--346.

\bibitem{TriebelGr}
\textsc{Triebel, H.}
\newblock Function spaces on {L}ie groups, the {R}iemannian approach.
\newblock \emph{J. London Math. Soc. (2) 35}, 2 (1987), 327--338.

\end{thebibliography}
\end{document}